\documentclass[11pt]{article} %{{{
\usepackage{latexsym, amsfonts, amssymb, amsmath, amsthm, mathtools}
\usepackage{graphicx}
\usepackage{xurl}
\usepackage{multirow}
\usepackage{float}
\usepackage{subcaption}
\usepackage[colorlinks=true,
linkcolor=black,
filecolor=black,
urlcolor=black,
citecolor=black,
pdftitle={Chemotaxis models with signal-dependent sensitivity and a logistic-type
source, III: Bifurcation and spike-like solutions},
pdfauthor={Le Chen, Ian Ruau, Wenxian Shen},
pdfkeywords={pitchfork bifurcation, center-manifold reduction, global bifurcation, Keller--Segel systems, signal-dependent sensitivity, fixed-mass chemotaxis},
breaklinks=true,
]{hyperref}
\usepackage[left=1in, right=1in, top=1in, bottom=1in]{geometry}
\usepackage[dvipsnames,svgnames]{xcolor}
\usepackage{array}
\usepackage{colortbl}
\usepackage{pgfplots}
\pgfplotsset{compat=1.18}
\makeatletter
\providecommand\UseTaggingSocket[1]{}
\providecommand\insert@pcolumn{%
\the@toks\the\@tempcnta\relax
\ignorespaces\@sharp\unskip
\the@toks\the\count@\relax}
\makeatother
\usepackage[normalem]{ulem}
\usepackage{caption}
\usepackage[shortlabels]{enumitem}
\usepackage{cancel}

\usepackage[norefs,nocites]{refcheck}

\definecolor{TableRowGray}{gray}{0.9}

\newcommand{\norm}[1]{||#1||}

\usepackage{amsrefs}

\renewcommand{\eprint}[1]{\href{https://arxiv.org/abs/#1}{\nolinkurl{#1}}}

\BibSpec{article}{%
+{}  {\PrintAuthors}                {author}
+{,} { \textit}                     {title}
+{.} { }                            {part}
+{:} { \textit}                     {subtitle}
+{,} { \PrintContributions}         {contribution}
+{.} { \PrintPartials}              {partial}
+{,} { }                            {journal}
+{}  { \textbf}                     {volume}
+{}  { \PrintDatePV}                {date}
+{,} { \issuetext}                  {number}
+{,} { \eprintpages}                {pages}
+{,} { }                            {status}
+{,} { \url}                        {url}
+{,} { \PrintDOI}                   {doi}
+{,} { available at \eprint}        {eprint}
+{}  { \parenthesize}               {language}
+{}  { \PrintTranslation}           {translation}
+{;} { \PrintReprint}               {reprint}
+{.} { }                            {note}
+{.} {}                             {transition}
+{}  {\SentenceSpace \PrintReviews} {review}
}

\newtheorem{theorem}{Theorem}[section] % Theorem is numbered by section

\newtheorem{lemma}{Lemma}[section]

\newtheorem{proposition}{Proposition}[section]

\theoremstyle{remark}
\newtheorem{remark}{Remark}[section]
\theoremstyle{remark}
\newtheorem{example}{Example}[section]

\numberwithin{equation}{section}
\numberwithin{figure}{section}
\numberwithin{table}{section}

\newcommand\RR{\ensuremath{\mathbb{R}}}

\begin{document}

\title{Chemotaxis models with signal-dependent sensitivity and a logistic-type
source, III: Bifurcation and spike-like solutions}
\author{
\begin{tabular}{ccc}
  Le Chen & Ian Ruau & Wenxian Shen \\
  \href{mailto:lzc0090@auburn.edu}{\texttt{lzc0090@auburn.edu}}
  & \href{mailto:iir0001@auburn.edu}{\texttt{iir0001@auburn.edu}}
  & \href{mailto:wenxish@auburn.edu}{\texttt{wenxish@auburn.edu}}
\end{tabular} \bigskip \\
Department of Mathematics and Statistics\\
Auburn University, Auburn, AL 36849, USA}

\date{}
\maketitle

\begin{abstract}
  This is the third part of a series of papers on boundedness, global existence,
  and asymptotic behavior of positive classical solutions to the following
  parabolic--elliptic chemotaxis model,
  \begin{equation}\label{E:main-abstract-eq}\tag{CM}
    \begin{dcases}
      u_t=\Delta u- \chi_0\nabla \cdot\Big(\frac{u^m}{(1+v)^\beta}\nabla v\Big)+au-b u^{1+\alpha}, & x \in \Omega, \\
      0=\Delta v-\mu v+\nu u^\gamma,                                                               & x \in \Omega, \\
      \frac{\partial u}{\partial n}=\frac{\partial v}{\partial n}=0,                               & x \in \partial\Omega,
    \end{dcases}
  \end{equation}
  where $\Omega$ is a bounded smooth domain in $\mathbb{R}^N$, and the
  parameters $\alpha, \gamma, m, \mu, \nu$ are positive, $\chi_0 \in
  \mathbb{R}$, and $a, b, \beta$ are nonnegative. In the first part of this
  series~\cite{chen.ruau.ea:26:chemotaxis*1}, we identified several biologically
  relevant parameter regimes that guarantee the boundedness and global existence
  of positive classical solutions. In the second
  part~\cite{chen.ruau.ea:26:chemotaxis}, we investigated the persistence and
  stabilization of globally defined bounded positive solutions, with particular
  emphasis on quantifying the effects of the signal-dependence exponent $\beta$
  and the sign of the sensitivity coefficient $\chi_0$ on the long-time
  dynamics. In this third part, treating $\chi_0$ as the bifurcation parameter,
  we study bifurcation of positive constant equilibria of (CM) when the space
  dimension $N = 1$. We pay special attention to the effects of $\beta$ and the
  domain size on the bifurcation.

  By applying center manifold reduction, we derive a quantity $\beta_{n_0}$ at a
  simple bifurcation mode $n_0$ of a positive constant solution $(u^*,v^*)$, and
  show that super-critical (resp. sub-critical) pitchfork bifurcation occurs if
  $\beta_{n_0} > 0$ (resp. $\beta_{n_0} < 0$) when $\chi_0$ passes through the
  bifurcation threshold $\chi^*$. We find two interesting ways in which $\beta$
  enters: it shifts the bifurcation \textit{threshold}, namely, larger
  signal-dependence exponent $\beta$ needs proportionately larger $\chi_0$ for
  the constant equilibrium to lose linear stability,  and it also selects
  bifurcation \textit{type}, namely, when $\beta$ is small, the bifurcation may
  be super-critical, but when $\beta$ is sufficiently large, the bifurcation is
  always sub-critical. Biologically, $\beta$ thus acts twice in the same
  direction: it makes patterns harder to trigger, and it makes them more likely
  to appear abruptly rather than to grow continuously from the constant state.
  We also find some interesting effects of the domain size. For example, it is
  found that, in the non-minimal model, the bifurcation mode $n_0$ becomes
  larger and larger when the domain size increases and hence a large domain
  creates more complicated bifurcation patterns. In addition, under suitable
  conditions we prove unbounded continuation of the local bifurcation branches
  and prove that  sequences of stationary solutions with large sensitivity
  develop spike-like solutions. We give some numerical analysis of
  the continuation of local bifurcation and spike-like solutions. Our numerical
  time-evolution experiments suggest unbounded continuation of local bifurcation
  branches and increasingly sharp stationary profiles.
\end{abstract}

\medskip

\noindent \textbf{Keywords}. Pitchfork bifurcation, center-manifold reduction,
global bifurcation, signal-dependent sensitivity, fixed-mass chemotaxis,
spike-like solution.

\medskip

\noindent \textbf{AMS Subject Classification (2020)}: 35B32, 35K45, 35M31,
35Q92, 92C17, 92D25

{\hypersetup{linkcolor=black} \tableofcontents}

\section{Introduction}

This paper is the third of a series of papers on the boundedness, global
existence and asymptotic behavior of classical solutions to chemotaxis models of
the following form on a bounded, connected, and smooth domain $\Omega \subset
\mathbb{R}^N$:
\begin{align}\label{E:main-PE0}
  \begin{dcases}
    u_t = \Delta u -  \nabla \cdot (u^m\chi(v)  \nabla v) + au - b u^{1+\alpha}, & x \in \Omega, \\
    0= \Delta v - \mu v + \nu u^\gamma,                                          & x \in \Omega, \\
    \frac{\partial u}{\partial n} = \frac{\partial v}{\partial n} = 0,           & x \in \partial\Omega.
  \end{dcases}
\end{align}
Here, the function $\chi(v)$, known in the literature as the \textit{chemotactic
sensitivity function}, is given by
\begin{equation}\label{E:chi-function-eq}
  \chi(v) = \frac{\chi_0}{(1+ v)^\beta}.
\end{equation}
In equations~\eqref{E:main-PE0} and~\eqref{E:chi-function-eq}, $m, \alpha, \mu,
\nu$, and $\gamma$ are positive constants, $\chi_0$ is a real number, and $a, b$,
and $\beta$ are nonnegative constants.

System~\eqref{E:main-PE0} describes the evolution of a biological species
influenced by a chemical substance or signal produced by the species itself. In
this context, $u(t, x)$ represents the population density of the biological
species, and $v(t, x)$ represents the density of the chemical substance or
signal. When $\beta = 0$, $\chi(v)$ simplifies to the constant chemotactic
sensitivity function $\chi(v) \equiv \chi_0$. When $\beta > 0$, the magnitude of
$\chi(v)$ decreases with the signal concentration. Such signal dependence can
model partial loss of the signal through binding; see~\cite{winkler:10:absence}.
Related models with signal-dependent sensitivity or motility are studied
in~\cite{viglialoro.woolley:18:boundedness, baghaei:24:global,
black.lankeit.ea:20:stabilization}. System~\eqref{E:main-PE0} with $\beta > 0$
is known as a chemotaxis system with signal-dependent sensitivity. The
coefficient $\chi_0 > 0$ indicates that the signal is a \textit{chemoattractant}
signal, while $\chi_0 < 0$ indicates that the signal is \textit{chemorepellent}.
The reaction term $a u - b u^{1+\alpha}$ governs the local dynamics of the
biological species and is known as a
\textit{logistic source}. When $a = b = 0$,~\eqref{E:main-PE0} is a minimal
chemotaxis system. In the literature, a chemotaxis system is also referred to as
a \textit{Keller--Segel system}, named after the pioneering works of E. F.
Keller and L. A. Segel~\cite{keller.segel:70:initiation, keller.segel:71:model,
keller.segel:71:traveling}.

Central problems in studying~\eqref{E:main-PE0} include determining whether
solutions with given initial conditions exist globally and understanding the
asymptotic behavior of globally defined bounded solutions of~\eqref{E:main-PE0}.
For constant sensitivity, global existence, boundedness, and blow-up have been
studied in parabolic--elliptic models; see, e.g.,
\cite{galakhov.salieva.ea:16:on, nagai:01:blowup, nagai.senba:98:global,
hu.tao:17:boundedness, xiang:19:dynamics, tello.winkler:07:chemotaxis}. Herrero,
Medina, and Vel\'azquez~\cite{herrero.medina.ea:97:finite-time} considered a
related parabolic--elliptic reduction. Herrero and
Vel\'azquez~\cite{herrero.velazquez:96:singularity} studied finite-time
singularity patterns in a related two-dimensional parabolic--elliptic reduction.

By contrast, fewer works have focused on the global existence and asymptotic
behavior of positive solutions of~\eqref{E:main-PE0}. The objective of this
series of papers is to investigate the global existence, boundedness, and
asymptotic behavior of positive classical solutions of~\eqref{E:main-PE0}. In
Part~I~\cite{chen.ruau.ea:26:chemotaxis*1}, we studied the boundedness and
global existence of positive classical solutions of~\eqref{E:main-PE0} by
examining the roles of negative chemotactic sensitivity $\chi_0$, the nonlinear
chemotactic mobility $\frac{u^m}{(1+v)^\beta}$, and the logistic-type source
$u(a - b u^\alpha)$. In Part~II~\cite{chen.ruau.ea:26:chemotaxis}, we studied
orbitwise uniform persistence of globally defined, bounded, and positive
solutions  and stability of positive constant solutions under proper conditions
on the parameters. Among other things, it is proved that the relevant positive
constant solution loses stability when $\chi_0$ passes through the corresponding
critical sensitivity (see Proposition~\ref{P:review-prop-2} for a little more
detail).

In this paper, we explore the bifurcation of constant solutions
of~\eqref{E:main-PE0} when the domain is one-dimensional, that is, $\Omega =
(0,L)$ for some $L > 0$, and $\chi(v)$ is given in~\eqref{E:chi-function-eq}. In
this case, the system~\eqref{E:main-PE0} can be written as
\begin{align}\label{E:main-PE}
  \begin{dcases}
    u_t = u_{xx} -  \chi_0\big(\frac{u^m}{(1+v)^\beta} v_x\big)_x + au - b u^{1+\alpha}, & x \in (0,L), \\
    0=  v_{xx} - \mu v + \nu u^\gamma,                                                   & x \in (0,L), \\
    u_x(t,x) = v_x(t,x) = 0,                                                             & x =0,L.
  \end{dcases}
\end{align}
We also study spike-like stationary solutions of~\eqref{E:main-PE} when $\chi_0
\to \infty$.

We point out that there are several works on the bifurcation of positive
constant solutions and the existence of spiky stationary solutions
of~\eqref{E:main-PE0} with various choices of $m, \alpha, \mu, \nu, \gamma$, and
$\chi(v)$. For instance, for quadratic logistic growth, Wang, Yan, and
Gai~\cite{wang.yan.ea:16:qualitative} studied a general one-dimensional
stationary Keller--Segel system, deriving local bifurcation values and positive
branches, global continuation properties, and possible boundary-spike limits
along positive monotone solution sequences. Jin, Wang, and
Zhang~\cite{jin.wang.ea:16:pattern} treated stationary bifurcation on
multidimensional bounded domains under simple-mode and nonresonance hypotheses,
and established a local wave-mode selection principle for stable branches. The
closest predecessor for the present signal-dependent setting is the work of
Kurt, Shen, and Xue~\cite{kurt.shen.ea:24:stability}, which established local
and global bifurcation results for stationary solutions in a chemotaxis system
with singular sensitivity and logistic source and analyzed spike formation along
the bifurcating branches. The reader is referred to the works
\cite{wang.xu:13:spiky, chen.hao.ea:14:stability, kong.ward.ea:24:existence,
kong.wei.ea:23:existence, wang.xu:22:radial, kong.wang:22:stability,
xu:21:existence, li:18:spiky}, for existence and stability results on spiky
solutions in related Keller--Segel models. Earlier studies in the Keller--Segel
literature also treated nonconstant stationary states and pattern formation from
complementary viewpoints; see, for example, Lin, Ni, and
Takagi~\cite{lin.ni.ea:88:large} for large-amplitude stationary solutions and
Kuto, Osaki, Sakurai, and Tsujikawa~\cite{kuto.osaki.ea:12:spatial} for spatial
pattern formation in a chemotaxis-diffusion-growth model. Recently, on compact
metric graphs, including intervals as a special case, Shemtaga, Shen, and
Sukhtaiev~\cite{shemtaga.shen.ea:23:stability} identified stability thresholds
and bifurcation points for logistic Keller--Segel models.

We study bifurcation of positive constant solutions of~\eqref{E:main-PE} with
$m, \alpha, \mu, \nu, \gamma > 0$, $\beta \ge 0$, and $a, b > 0$ or $a = b = 0$.
Hence, our study includes the non-minimal model with logistic type source as
well as the minimal model. We treat $\chi_0$ as the bifurcation parameter, and
study which type of bifurcation occurs when a positive constant solution
of~\eqref{E:main-PE} loses its stability both theoretically and numerically. We
pay special attention to the effects of $\beta$ on the bifurcation of positive
constant solutions of~\eqref{E:main-PE}. Theoretically, we apply the
center-manifold reduction presented by
Henry~\cite[Chapter~6]{henry:81:geometric} to compute the local amplitude
equation and determine the bifurcation type. Crandall--Rabinowitz
theory~\cite{crandall.rabinowitz:71:bifurcation} provides an alternative local
framework, while the global extension of local bifurcation uses Theorem~4.4 of
the quasilinear elliptic bifurcation framework developed by Shi and
Wang~\cite{shi.wang:09:on}. Numerically, we discretize the system in space by a
finite-difference method with Neumann boundary conditions, which produces
discrete analogues of the instability thresholds that converge to their
continuum counterparts as the mesh is refined. The time-evolution experiments
illustrate the predicted local bifurcation directions as well as global
extension of local bifurcation pattern; see
Section~\ref{SS:bifurcation-numerics}.

In the following, we give a brief review on some relevant results obtained in
Parts I and II of this series of papers, and then state the main results of the
current paper.

\subsection{A brief review of relevant results from Parts I and II}

In this subsection, we present some relevant results obtained in Parts I and II
of this series of papers.

\smallskip

First, let us define $C^+([0, L])$ by
\[
  C^+([0,L])\coloneq\{u\in C([0,L])\,|\, \inf_{x\in [0,L]} u(x)>0\}.
\]
Then, by standard arguments, one can show the local existence of classical
solutions of~\eqref{E:main-PE} on a maximal interval $[0, T_{\max})$ with initial
condition $u(0, x;u_0) = u_0(x) \in C^+([0, L])$. For the precise definition of
classical solutions of~\eqref{E:main-PE} or~\eqref{E:main-PE0}, we refer the
reader to~\cite[Definition 1.1]{chen.ruau.ea:26:chemotaxis*1} in Part~I. The
reader is also referred to Part~I \cite{chen.ruau.ea:26:chemotaxis*1} for
biologically relevant parameter regimes that guarantee the boundedness and
global existence of positive classical solutions.

Next, note that when $a = b = 0$, for any positive constant $u^* > 0$,
$(u^*,v^*)$ is a positive constant solution of~\eqref{E:main-PE}, where $v^* =
\frac{\nu}{\mu} \big( u^*\big)^{\gamma}$. Hence,~\eqref{E:main-PE} with $a = b =
0$ has a one-parameter family of positive constant solutions,
$\{(u^*,v^*)\}_{u^*>0}$. When $a, b > 0$,~\eqref{E:main-PE} has a unique
positive constant solution $(u^*, v^*)$ given by
\begin{equation}\label{E:equilibrium}
  (u^*, v^*) = \Big(\big(\frac{a}{b}\big)^{1/\alpha},
  \:\frac{\nu}{\mu}\big(\frac{a}{b}\big)^{\gamma/\alpha}\Big).
\end{equation}

Let $(u^*,v^*)$ be a positive constant solution of~\eqref{E:main-PE}. It is
straightforward to verify that the linearization of~\eqref{E:main-PE} at
$(u^*,v^*)$ is given by
\begin{equation}\label{E:main-linear-PE1}
  \begin{dcases}
    u_t = u_{xx} - \chi_0\frac{(u^*)^m}{(1+v^*)^\beta}\Big(\mu v-\nu \gamma(u^*)^{\gamma-1}u\Big)-a\alpha u, & x\in(0,L), \cr
    0 = v_{xx} - \mu v + \nu \gamma(u^*)^{\gamma-1}u,                                                        & x\in(0,L), \cr
    u_x(t,x)=v_x(t,x)=0,                                                                                     & x=0,L.
  \end{dcases}
\end{equation}
Let $\big\{\lambda_n\big\}_{n=0}^\infty$ denote the eigenvalues of the negative
Neumann Laplacian:
\begin{align*}
  \begin{dcases}
    - u_{xx}                    = \lambda u, & x\in (0,L), \cr
    \frac{\partial u}{\partial n} = 0,       & x=0,L
  \end{dcases}
\end{align*}
with $0 = \lambda_0 < \lambda_1 < \lambda_2 < \cdots$. Note that $\lambda_n =
\frac{n^2\pi^2}{L^2}$. Let
\begin{equation}\label{E:sigma_n}
  \sigma_n =
  \sigma_n(\chi_0) \coloneqq
  -\,\lambda_n
  + \chi_0\,\nu\gamma\,
  \frac{(u^*)^{m+\gamma-1}}{(1+v^*)^\beta}\,\frac{\lambda_n}{\mu+\lambda_n}
  - a\alpha.
\end{equation}
It can be verified directly that $\big\{\sigma_n\big\}_{n=0}^\infty$ are the
eigenvalues of the following corresponding eigenvalue problem
of~\eqref{E:main-linear-PE1},
\begin{equation}\label{E:main-linear-PE2}
  \begin{dcases}
    u_{xx}-\chi_0\frac{(u^*)^m}{(1+v^*)^\beta}(\mu v-\nu \gamma(u^*)^{\gamma-1}u)-a\alpha u=\sigma u, & x\in(0,L),\cr
    0 = v_{xx}-\mu v+\nu \gamma(u^*)^{\gamma-1}u,                                                     & x\in(0,L), \cr
    u_x(t,x)=v_x(t,x)=0,                                                                              & x=0,L
  \end{dcases}
\end{equation}
and $\phi_n = \cos(\frac{n\pi x}{L})$ is an eigenfunction corresponding to the
eigenvalue $\sigma_n$. Note that, when $a = b = 0$, $\sigma_0 = 0$, and when $a,
b > 0$, $\sigma_0 < 0$.

For a given positive constant equilibrium $(u^*,v^*)$ of~\eqref{E:main-PE}, let
\begin{equation}\label{E:chi-star}
  \chi^*(u^*)= \chi_{a,b,\beta}^*(u^*) \coloneqq \inf_{n\ge
  1}\Big[\frac{(1+v^*)^\beta}{\nu\gamma\,(u^*)^{m+\gamma-1}} \times
  \frac{(\lambda_n + a\alpha)(\mu+\lambda_n)}{\lambda_n}\Big],
\end{equation}
where $v^* = \frac{\nu}{\mu}(u^*)^\gamma$. When $a, b > 0$, $u^* =
(a/b)^{1/\alpha}$; when $a = b = 0$, $u^* > 0$ is free. Throughout the paper,
$\chi^*(u^*)$ denotes $\chi_{a,b,\beta}^*(u^*)$ in the non-minimal case and
$\chi_{0,0,\beta}^*(u^*)$ in the minimal case.

The following results, expressed in the present notation, were proved
in~\cite{chen.ruau.ea:26:chemotaxis}.

\begin{proposition}[Linear stability/instability]\label{P:review-prop-2}
  Let $(u^*,v^*)$ be a positive constant solution of~\eqref{E:main-PE}. If
  $\chi_0 < \chi^*(u^*)$, then $(u^*, v^*)$ is linearly exponentially stable in
  the case $a, b > 0$, and is linearly exponentially stable with respect to
  perturbations with spatial average $u^*$ in the case $a = b = 0$. If $\chi_0 >
  \chi^*(u^*)$, then $(u^*,v^*)$ is linearly unstable.
\end{proposition}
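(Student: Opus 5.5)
The plan is a Fourier (cosine) decomposition of the linearized system \eqref{E:main-linear-PE1}, which diagonalizes it, followed by a uniform spectral gap computation. First I will solve the elliptic equation for $v$ in terms of $u$. Expand $u(t,\cdot)=\sum_{n\ge0}\hat u_n(t)\phi_n$ with $\phi_n=\cos(n\pi x/L)$, which is a complete orthogonal basis of $L^2(0,L)$ compatible with the Neumann conditions. The second equation then gives
\[
\hat v_n=\frac{\nu\gamma(u^*)^{\gamma-1}}{\mu+\lambda_n}\,\hat u_n .
\]
Substituting into the first equation, and using $\mu v-\nu\gamma(u^*)^{\gamma-1}u=v_{xx}$, each mode decouples into a scalar ODE $\hat u_n'=\sigma_n\hat u_n$ with $\sigma_n$ as in \eqref{E:sigma_n}. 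Hence
\[
u(t)=\sum_n e^{\sigma_n t}\hat u_n(0)\phi_n ,
\]
and linear stability reduces to the sign of $\sup_n\sigma_n$, taken over $n\ge0$ when $a,b>0$ and over $n\ge1$ when $a=b=0$.

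Next I will prove a uniform bound on the $\sigma_n$ for $\chi_0<\chi^*$. Write $K=\nu\gamma(u^*)^{m+\gamma-1}(1+v^*)^{-\beta}>0$. Then
\[
\sigma_n=\frac{\lambda_n}{\mu+\lambda_n}\Big(\chi_0K-\frac{(\lambda_n+a\alpha)(\mu+\lambda_n)}{\lambda_n}\Big)\qquad (n\ge1).
\]
The bracketed quantity $g(\lambda)=(\lambda+a\alpha)(\mu+\lambda)/\lambda$ grows like $\lambda$ as $\lambda\to\infty$. Therefore $\chi^*=\inf_{n\ge1}g(\lambda_n)/K$ is attained at some $n$ and is positive. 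I then split into cases.
\begin{itemize}
\item If $\chi_0\le0$, then $\sigma_n\le-\lambda_n-a\alpha\le-\lambda_1$ for $n\ge1$.
\item If $0<\chi_0<\chi^*$, then $\chi_0K-g(\lambda_n)\le(\chi_0-\chi^*)K<0$ for every $n$. Multiplying by $\lambda_n/(\mu+\lambda_n)\ge\lambda_1/(\mu+\lambda_1)$ gives $\sigma_n\le-\delta$ with $\delta=(\chi^*-\chi_0)K\lambda_1/(\mu+\lambda_1)>0$, uniformly in $n\ge1$.
\end{itemize}
For $n=0$ we have $\sigma_0=-a\alpha<0$ when $a,b>0$. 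When $a=b=0$, $\sigma_0=0$ and the zero mode is conserved; this mode is excluded exactly by considering perturbations with spatial average $u^*$, that is, $\hat u_0=0$. Parseval's identity then gives $\|u(t)\|_{L^2}\le e^{-\delta t}\|u(0)\|_{L^2}$, and the $v$-component inherits the same decay through the elliptic relation. If stability in stronger norms is intended, I will get it from the same decay of the semigroup $e^{tA}$, since the sectorial operator here is diagonal in the cosine basis, together with parabolic smoothing.

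For instability when $\chi_0>\chi^*$, take $n_0$ attaining the infimum. Then $\sigma_{n_0}=\frac{\lambda_{n_0}}{\mu+\lambda_{n_0}}(\chi_0-\chi^*)K>0$, and $e^{\sigma_{n_0}t}\phi_{n_0}$ is an exponentially growing solution of the linearized problem, which is linear instability.

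I expect the main obstacle to be the uniform gap rather than the algebra. One must make sure that $\inf_n\sigma_n$ is bounded away from zero, which the growth of $g$ and the lower bound $\lambda_n/(\mu+\lambda_n)\ge\lambda_1/(\mu+\lambda_1)$ handle. One must also correctly treat the neutral zero mode in the minimal case through the mass constraint.
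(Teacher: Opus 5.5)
Your proof is correct and follows the same spectral approach the paper relies on. The proposition is cited from Part II rather than reproved here, but the paper records that the linearization \eqref{E:main-linear-PE1} is diagonalized by $\phi_n=\cos(n\pi x/L)$ with eigenvalues $\sigma_n$ from \eqref{E:sigma_n}. Your uniform gap for $\chi_0<\chi^*$ and your growing mode $e^{\sigma_{n_0}t}\phi_{n_0}$ for $\chi_0>\chi^*$ are exactly the deductions from that fact.

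One small correction: in the non-minimal case the mode $n=0$ contributes $\sigma_0=-a\alpha$. The $L^2$ decay rate is therefore the minimum of your bound for $n\ge 1$ and $a\alpha$, for example $e^{-\min\{\delta,a\alpha\}t}$ when $0<\chi_0<\chi^*$, rather than $e^{-\delta t}$.
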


Proposition~\ref{P:review-prop-2} identifies $\chi^*(u^*)$ as the boundary
between the two strict linear regimes: the constant equilibrium is linearly
stable below this threshold and linearly unstable above it, while the equality
case is neutral at the linear level. It is therefore important to determine
which nonconstant steady states bifurcate from $(u^*,v^*)$ when $\chi_0$ passes
through $\chi^*(u^*)$ and how the parameters in~\eqref{E:main-PE} affect the
bifurcation. In this paper, we study the bifurcation of constant solutions
of~\eqref{E:main-PE} and provide both theoretical analysis and numerical
analysis.

\subsection{Statements of the main results and remarks}

Let $(u^*,v^*)$ be a positive constant solution of~\eqref{E:main-PE}. By
Proposition~\ref{P:review-prop-2}, it is expected that there are nonconstant
steady states of~\eqref{E:main-PE} bifurcating from $(u^*,v^*)$ when $\chi_0$
passes through $\chi^*(u^*)$. To be more precise, let
\begin{equation}\label{E:chi-n-eq-new1}
  \chi_n(u^*)=\frac{(1+v^*)^\beta}{\nu\gamma(u^*)^{m+\gamma-1}}\frac{(\lambda_n+a\alpha)(\mu+\lambda_n)}{\lambda_n},
  \quad n \ge 1.
\end{equation}
By~\eqref{E:chi-star} and~\eqref{E:chi-n-eq-new1}, there is some $n_0 > 0$
independent of $\beta$ and $(u^*,v^*)$ such that
\begin{align}\label{E:bifurcation-mode-eq1}
  \chi^*(u^*)=\chi_{n_0}(u^*)\quad \forall\,\beta\ge 0.
\end{align}
Note that, if $\chi_0 - \chi^*(u^*) < 0$, then $\sigma_n(\chi_0) < 0$ for all $n
\ge 1$, and if $0 < \chi_0 - \chi^*(u^*)\ll 1$, then $\sigma_{n_0}(\chi_0) > 0$ and
$\sigma_n(\chi_0) < 0$ for $n \ge 1$  satisfying that $\chi_{n}(u^*)\not =
\chi^*(u^*)$,  where $\sigma_n(\cdot)$ is as in~\eqref{E:sigma_n}. Therefore,
we call $n_0$ a {\it bifurcation mode} if~\eqref{E:bifurcation-mode-eq1} holds.

Let
\begin{equation}\label{E:L0-Lk-eq}
  L_0:=0 \quad {\rm and}\quad L_k=L_k(a):=\pi\sqrt{k(k+1)}/(a\alpha\mu)^{1/4}\,\,
  {\rm for}\,\,  k\ge1.
\end{equation}
When $a = 0$, we agree on
\[
  L_1=L_1(a):=\infty.
\]
If $a > 0$, then $n_0 = k$ is the unique bifurcation mode when $L \in
(L_{k-1}(a), L_k(a))$, and $n_0 = k, k + 1$ are two bifurcation modes when $L =
L_k(a)$ ($k \ge 1$). If $a = 0$, then $n_0 = 1$ is the unique bifurcation mode
for any $L \in (0, L_1(0)) = (0, \infty)$. Therefore, we say $(L_{k-1}, L_k)$ is
an \textit{admissible interval for the domain size $L$} and we say {\it $L$ is
admissible} if $L \in (L_{k-1}, L_k)$ for some $k \ge 1$. Note that, in the
minimal model, we have only one admissible interval $(0, L_1(0)) = (0, \infty)$
and any $L > 0$ is admissible.

Observe that if $n_0 \ge 1$ is a unique bifurcation mode, then
\begin{align}\label{E:bifurcation-mode-eq2}
  \sigma_{n_0}(\chi^*(u^*))=0\quad {\rm and}\quad \sigma_{n}(\chi^*(u^*))<0
  \quad\forall\, \beta\ge 0,\, \,n\ge 1, n\not =n_0.
\end{align}
In addition, $\sigma_0 = - a\alpha < 0$ in the non-minimal model; in the minimal
model, the neutral constant mode is excluded by the fixed-mass constraint.
Hence, by Proposition~\ref{P:review-prop-2}, when $\chi_0$ passes through
$\chi^*(u^*)$ for any $\beta \ge 0$, the constant solution $(u^*,v^*)$
of~\eqref{E:main-PE} loses its linear stability along the eigen-direction
$\phi_{n_0}$, and it is expected that there are non-constant solutions
bifurcating from $(u^*,v^*)$ along this eigen-direction.

\medskip

Our first main theorem is on the local bifurcation of a positive constant
solution $(u^*, v^*)$ of~\eqref{E:main-PE}, which will be proved in
Section~\ref{S:local-Bifurcation}.

\begin{theorem}[Local pitchfork bifurcation]\label{T:local-bifurcation}
  Let $(u^*,v^*)$ be a positive constant solution of~\eqref{E:main-PE}. Assume
  that $n_0 \ge 1$ is a unique bifurcation mode. Let
  \begin{align}\label{E:alpha-n0}
    \alpha_{n_0}=\alpha_{n_0}(u^*,m,\beta,\gamma,\mu,\nu)
    \coloneqq \partial_{\chi_0}\sigma_{n_0}(\chi^*(u^*))
    = \nu\gamma\,\frac{(u^*)^{m+\gamma-1}}{(1+v^*)^\beta}\,
    \frac{\lambda_{n_0}}{\mu+\lambda_{n_0}}\,(>0).
  \end{align}
  There is a number $\beta_{n_0}$ depending on $u^*$ and all the parameters in
  the model (see~\eqref{E:beta-n0}) such that the following hold.

  If $\beta_{n_0}\not = 0$, pitchfork bifurcation occurs near $(u^*,v^*)$
  in~\eqref{E:main-PE} when $\chi_0$ passes through $\chi^*$. More precisely,
  there are two continuous curves of stationary solutions
  $\Gamma^\pm_{n_0}(\chi_0)$,
  \[
    \Gamma_{n_0}^{\pm}
    \colon I_\varepsilon
    \longrightarrow  C^2(\overline{\Omega}) \times C^2(\overline{\Omega}),
  \]
  bifurcating from $(u^*,v^*)$, where $I_\varepsilon\coloneqq [\chi^*(u^*),
  \chi^*(u^*) + \varepsilon)$ if $\beta_{n_0} > 0$ and $I_\varepsilon: =
  (\chi^*(u^*) - \varepsilon,\chi^*(u^*)]$ if $\beta_{n_0} < 0$ for some
  $\varepsilon > 0$, $\Gamma_{n_0}^\pm(\chi^*) = (u^*,v^*)$,
  $\Gamma_{n_0}^+(\chi_0)\not = \Gamma_{n_0}^-(\chi_0)$ for $\chi_0\not =
  \chi^*$, and the solutions on $\Gamma_{n_0}^{\pm}$ are nonconstant for $\chi_0
  \ne \chi^*(u^*)$.

  Moreover, for $(u^\pm,v^\pm) = \Gamma_{n_0}^\pm(\chi_0)$ ($\chi_0 \in
  I_\varepsilon$), there hold
  \[
    u^\pm=u^*\pm\sqrt{\frac{\alpha_{n_0}}{\beta_{n_0}}(\chi_0-\chi^*) }
    \phi_{n_0}+\mathcal{O}(|\chi_0-\chi^*|),
  \]
  and
  \[
    v^\pm=v^* \pm \frac{\nu \gamma (u^*)^{\gamma-1}}{\lambda_{n_0}+\mu}
    \sqrt{\frac{\alpha_{n_0}}{\beta_{n_0}}(\chi_0-\chi^*)}
    \phi_{n_0}+\mathcal{O}(|\chi_0-\chi^*|).
  \]

  If $\beta_{n_0} > 0$, the bifurcation is super-critical, and for each fixed
  $\chi_0 > \chi^*(u^*)$ sufficiently close to $\chi^*(u^*)$, $(u^\pm,v^\pm) \in
  \Gamma_{n_0}^\pm (\chi_0)\setminus \{(u^*,v^*)\}$ are linearly exponentially
  stable in the case $a, b > 0$, and they are linearly exponentially stable with
  respect to perturbations with spatial average $u^*$ in the case $a = b = 0$.

  If $\beta_{n_0} < 0$, the bifurcation is sub-critical, and for each fixed
  $\chi_0 < \chi^*(u^*)$ sufficiently close to $\chi^*(u^*)$, $(u^\pm,v^\pm) \in
  \Gamma_{n_0}^\pm (\chi_0)\setminus \{(u^*,v^*)\}$ are linearly unstable.
\end{theorem}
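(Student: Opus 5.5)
We follow the center-manifold route of Henry~\cite[Chapter~6]{henry:81:geometric}. First we recast~\eqref{E:main-PE} as a scalar nonlocal parabolic equation for $w=u-u^*$. Set $v=v^*+(\mu-\partial_{xx})^{-1}\nu\big((u^*+w)^\gamma-(u^*)^\gamma\big)$ with Neumann boundary conditions, and write
\[
w_t = A(\chi_0)w + F(w,\chi_0).
\]
Here $A(\chi_0)$ is the linear operator whose spectrum is $\{\sigma_n(\chi_0)\}$, with eigenfunctions $\phi_n$, and $F$ collects the quadratic and cubic terms. These come from expanding $(u^*+w)^m$, $(1+v)^{-\beta}$, $(u^*+w)^\gamma$, $au-bu^{1+\alpha}$ to third order. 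In the minimal case we restrict to the invariant subspace of zero-mean $w$, which removes $\sigma_0=0$. By~\eqref{E:bifurcation-mode-eq2} and uniqueness of the mode, at $\chi_0=\chi^*$ the spectrum splits into the simple eigenvalue $0$ (eigenvector $\phi_{n_0}$) and a part in $\{\mathrm{Re}\,\sigma\le -\delta\}$. We work in $X=L^p$ or $C^{\theta}$ with the sectorial realization. Augmenting the system by $\dot{\chi}_0=0$, Henry's theorem gives a two-dimensional locally invariant, locally attracting center manifold
\[
w = s\phi_{n_0} + h(s,\chi_0-\chi^*), \qquad h=\mathcal{O}(s^2+|s||\chi_0-\chi^*|).
\]

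Next we compute the reduced equation $\dot s = g(s,\chi_0)$ by projecting onto $\phi_{n_0}$ with the $L^2$-adjoint projection $P w = \frac{2}{L}\int_0^L w\phi_{n_0}\,dx$. Reflection symmetry suggests, and a direct computation confirms, that the quadratic term vanishes, since $\int\phi_{n_0}^3=0$ for $n_0\ge1$. The quadratic part of $F$ at $s\phi_{n_0}$ lies in $\mathrm{span}\{\phi_0,\phi_{2n_0}\}$. So $h(s,\chi^*)=s^2(c_0\phi_0+c_{2}\phi_{2n_0})+\mathcal{O}(s^3)$, where $c_0,c_2$ are obtained by inverting $A(\chi^*)$ on $\phi_0,\phi_{2n_0}$. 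This uses $\sigma_0,\sigma_{2n_0}\neq0$; in the minimal case the $\phi_0$ component is instead fixed to zero by the mass constraint.

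Substituting back gives
\[
\dot s=\alpha_{n_0}(\chi_0-\chi^*)s-\beta_{n_0}s^3+\mathcal{O}\big(|s|(\chi_0-\chi^*)^2+|s|^3|\chi_0-\chi^*|+s^4\big).
\]
Here $\alpha_{n_0}=\partial_{\chi_0}\sigma_{n_0}$, and $\beta_{n_0}$ equals minus the $\phi_{n_0}$-coefficient of the cubic term. That term has two contributions: the direct cubic terms of $F$, and the quadratic cross-interactions of $s\phi_{n_0}$ with $s^2(c_0+c_2\phi_{2n_0})$. We use $\phi_{n_0}\phi_{2n_0}=\frac12(\phi_{n_0}+\phi_{3n_0})$, $\phi_{n_0}^3=\frac34\phi_{n_0}+\frac14\phi_{3n_0}$, and Fourier multipliers $(\mu+\lambda_k)^{-1}$ for the $v$-components. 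The result is the explicit~\eqref{E:beta-n0}.

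The main obstacle is this bookkeeping. The nonlinearity in the flux $-\chi_0\big(u^m(1+v)^{-\beta}v_x\big)_x$ mixes three expansions, each involving $\beta$, and every term must be tracked. After the computation we should factor out $s$ and also verify that the remainder is $\mathcal{O}(s^4)$ and not merely $o(s^3)$, using the symmetry $x\mapsto L-x$, which maps $\phi_{n_0}\mapsto(-1)^{n_0}\phi_{n_0}$.

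Finally we analyze the reduced equation. Equilibria with $s\neq0$ solve $\alpha_{n_0}(\chi_0-\chi^*)-\beta_{n_0}s^2+\mathcal{O}(\cdot)=0$. Setting $s=\pm\sqrt{\alpha_{n_0}(\chi_0-\chi^*)/\beta_{n_0}}\,(1+r)$ and applying the implicit function theorem in $r$, with $\tau=\sqrt{|\chi_0-\chi^*|}$ as the variable, gives two branches. They exist exactly for $\mathrm{sign}(\chi_0-\chi^*)=\mathrm{sign}\,\beta_{n_0}$. Since $h=\mathcal{O}(s^2)$, this yields the stated expansions of $u^\pm$. The expansion of $v^\pm$ follows from $v-v^*=\nu\gamma(u^*)^{\gamma-1}(\mu+\lambda_{n_0})^{-1}s\phi_{n_0}+\mathcal{O}(s^2)$. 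The branches are distinct and nonconstant because $s\neq0$. Regularity in $C^2$ follows from elliptic regularity for stationary solutions.

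For stability, the linearization of the reduced flow at these equilibria is $g_s=-2\alpha_{n_0}(\chi_0-\chi^*)+\text{h.o.t.}$. This is negative in the super-critical case and positive in the sub-critical case. All other spectrum stays in $\mathrm{Re}\le-\delta/2$ by perturbation, so the linearization of the full system at $(u^\pm,v^\pm)$ is exponentially stable, respectively has a positive eigenvalue. In the minimal case the same holds on the zero-mean subspace.
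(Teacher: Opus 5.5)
Your proposal is correct and follows essentially the same route as the paper: a Henry center-manifold reduction for the nonlocal equation in $\tilde u$, augmented by $\dot{\tilde\chi}=0$, followed by second-order center-manifold coefficients supported only on the $\phi_0$ and $\phi_{2n_0}$ modes (with the $\phi_0$ coefficient forced to zero by the mass constraint in the minimal case), the cubic normal form $\dot u_{n_0}=\alpha_{n_0}\tilde\chi u_{n_0}-\beta_{n_0}u_{n_0}^3+\cdots$, and the pitchfork analysis; your stability argument, which identifies $g_s$ as the eigenvalue along the tangent to the invariant curve and controls the rest of the spectrum by perturbation, spells out what the paper's Step 5 only states briefly. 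One small correction: the $\mathcal{O}(s^4)$ remainder follows directly from the $C^4$ smoothness of the center manifold and the vector field, so you do not need the reflection $x\mapsto L-x$, which in any case reverses $\phi_{n_0}$ only when $n_0$ is odd.
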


Before we state our next result, let us make a remark on
Theorem~\ref{T:local-bifurcation}.

\begin{remark}\label{R:local-bifurcation-minimal}  Let $(u^*,v^*)$
  be a positive constant solution of~\eqref{E:main-PE} and $n_0
  \ge 1$ be a unique bifurcation mode.
  \begin{itemize}
    \item[(1)] $\beta_{n_0}$ stated in Theorem~\ref{T:local-bifurcation} is
      derived explicitly via center manifold theory. Roughly speaking, let
      \[
        u(t,x)=\tilde{u}(t,x)+u^*,\quad v(t,x)=\tilde{v}(t,x)+v^*,
      \]
      and
      \[
        \tilde{u}(t,x) = \sum_{n=0}^\infty u_n(t)\cos\Big(\tfrac{n\pi x}{L}\Big) \quad \text{and} \quad
        \tilde{v}(t,x) = \sum_{n=0}^\infty v_n(t)\cos\Big(\tfrac{n\pi
        x}{L}\Big).
      \]
      Then by center manifold theory, the study of bifurcation solutions
      of~\eqref{E:main-PE} near $(u^*,v^*)$ when $\chi_0$ passes through
      $\chi^*$ is reduced to the study of bifurcation solutions of the following
      scalar ODE near $0$ when $\tilde{\chi} = \chi_0 - \chi^*(u^*)$ passes
      through $0$,
      \[
        \frac{d u_{n_0}}{dt}
        = \alpha_{n_0}\,\tilde{\chi}\,u_{n_0}
        - \beta_{n_0}\,u_{n_0}^3
        + \mathcal{O}\big( |u_{n_0}|^2 |\tilde{\chi}|^2+ |u_{n_0}|^3 |\tilde{\chi}|+
        |u_{n_0}|^4\big)
      \]
      (see~\eqref{E:center-ODE-final}), where $\beta_{n_0}$ is explicitly given
      in~\eqref{E:beta-n0}. It is then seen that the bifurcation is pitchfork as
      long as $\beta_{n_0}\not = 0$, and the sign of $\beta_{n_0}$ determines
      the bifurcation direction or bifurcation type, i.e., the bifurcation is
      super-critical or sub-critical. Although using center-manifold reduction
      to derive this coefficient is standard, it is nontrivial and worth
      deriving the explicit formula for the coefficient $\beta_{n_0}$. Such a
      formula allows us to determine the effects of the parameters on the
      bifurcation (see Theorem~\ref{T:direction-change} about the effects of
      $\beta$ and $L$), which is important biologically.

    \item[(2)] It is seen from Theorem~\ref{T:local-bifurcation} that there are
      non-constant positive stationary solutions $(u,v)$ of~\eqref{E:main-PE}
      near $(u^*,v^*)$ (with the average of $u$ being $u^*$ in the minimal
      model) for $\chi_0$ near $\chi^*(u^*)$. By
      Proposition~\ref{P:review-prop-2} and stable manifold theory
      (see~\cite[Theorem~5.1.1]{henry:81:geometric}), there are no non-constant
      positive stationary solutions $(u,v)$ of~\eqref{E:main-PE} near
      $(u^*,v^*)$ (with the average of $u$ being $u^*$ in the minimal model) for
      $\chi_0$ near $0$. When $\chi_0 = 0$, there are no any non-constant
      positive stationary solution $(u,v)$ of~\eqref{E:main-PE} (with the
      average of $u$ being $u^*$ in the minimal model). This follows from the
      maximum principle when $a, b > 0$, and from $u_{xx} = 0$ with Neumann
      boundary conditions when $a = b = 0$.

  \end{itemize}
\end{remark}

Our next main theorem concerns the effects of $\beta$ and $L$ on the bifurcation
of a positive constant solution $(u^*, v^*)$ of~\eqref{E:main-PE}, which will be
proved in Section~\ref{S:direction-change}.

\begin{theorem}[Effects of $\beta$ and $L$ on the bifurcation]\label{T:direction-change}
  Let $(u^*,v^*)$ be a positive constant solution of~\eqref{E:main-PE}. Assume
  that $n_0 \ge 1$ is a unique bifurcation mode. Let $\beta_{n_0}$ be as in
  Theorem~\ref{T:local-bifurcation}. Then the following hold.

  \begin{itemize}

    \item[(1)] Fix all the parameters except $\beta$. Then
      \begin{equation}\label{E:beta-n0-eq}
        \beta_{n_0}=\beta_{n_0}(\beta)=A\beta^2+B\beta+C
      \end{equation}
      for some constants $A, B, C$ independent of $\beta$ with $A < 0$
      (see~\eqref{E:A-closed-form}). Hence, one of the following occurs when
      $\beta$ changes:
      \begin{itemize}
        \item[(i)] There is $\beta_+ > 0$ such that $\beta_{n_0}(\beta) > 0$ for
          $0 \le \beta < \beta_+$, and $\beta_{n_0}(\beta) < 0$ for $\beta >
          \beta_+$.

        \item[(ii)] There are $0 \le \beta_- < \beta_+$ such that
          $\beta_{n_0}(\beta) < 0$ for $\beta \in [0, \beta_-)$,
          $\beta_{n_0}(\beta) > 0$ for $\beta \in (\beta_-, \beta_+)$, and
          $\beta_{n_0}(\beta) < 0$ for $\beta > \beta_+$.

        \item[(iii)] There is $\beta_0 \ge 0$ such that $\beta_{n_0}(\beta_0) =
          0$ and $\beta_{n_0}(\beta) < 0$ for all $\beta \ge 0$ with $\beta \ne \beta_0$.

        \item[(iv)]  $\beta_{n_0}(\beta) < 0$ for all $\beta \ge 0$.
      \end{itemize}

    \item[(2)] Fix all parameters except $L$. Consider $L \in (L_{k-1}(a),
      L_k(a))$ for some $k \ge 1$. Then $n_0 = k$ and
      \begin{equation}\label{E:beta-n0-eq1}
        \beta_{n_0}
        =\frac{P(\lambda_{n_0})}{Q(\lambda_{n_0})},\quad
        \lambda_{n_0}=\frac{\pi^2 n_0^2}{L^2}.
      \end{equation}
      Here, in the non-minimal model,
      \[
        \lambda_{n_0}\in    \Big(
        \frac{k}{k+1}\sqrt{a\alpha\mu},
        \frac{k}{k-1}\sqrt{a\alpha\mu}
        \Big)
        \quad {\rm if}\,\, k\ge2,
        \qquad
        \lambda_{n_0}\in \Big(\frac12\sqrt{a\alpha\mu},\infty\Big)\quad {\rm
        if}\,\,
        k=1,
      \]
      $P$ is a polynomial of degree at most five, and
      \[
        Q(\lambda)
        =12K^2(\mu+\lambda)^2(4\lambda^2-a\alpha\mu)> 0\quad{\rm for}\,\,
        \lambda >\frac{\sqrt{a\alpha\mu}}{2}\quad {\rm with}\quad
        K=\tfrac{\nu\gamma(u^*)^{m+\gamma-1}}
        {(1+v^*)^\beta}.
      \]
      In the minimal model, $\lambda_{n_0} \in (0, \infty)$, $P$ is a polynomial
      of degree at most three, and
      \[
        Q(\lambda)=24(u^*)^2(\mu+\lambda)^2>0
        \qquad{\rm for}\,\,\, \lambda>0.
      \]

      Moreover, there are constants $c$ and $\beta_\infty$, independent of $L$,
      such that
      \[
        \beta_{n_0}
        =c\frac{\pi^2}{L^2}+\mathcal{O}(1)
        \quad{\rm as}\,\, L\to0+,\quad {\rm and}\,\,
        \quad
        \beta_{n_0}\longrightarrow\beta_\infty
        \quad{\rm as}\,\, L\to\infty\,\,\, {\rm admissible\,\, lengths.}
      \]
      Hence, the following hold.
      \begin{itemize}
        \item[(i)] Suppose $P\not \equiv 0$. When $L$ changes in the admissible
          interval $ (L_{k-1}(a),L_k(a))$, in the non-minimal model, the
          bifurcation type changes at most five times, and in the minimal model,
          it changes at most three times.

        \item[(ii)] If $\beta_\infty > 0$ (respectively, $\beta_\infty < 0$),
          the bifurcation is super-critical (respectively, sub-critical) for
          every sufficiently large admissible length.

        \item[(iii)] If $c > 0$ (respectively, $c < 0$), the bifurcation is
          super-critical (respectively, sub-critical) for all sufficiently small
          $L > 0$.
      \end{itemize}
  \end{itemize}
\end{theorem}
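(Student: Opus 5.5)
The plan is to work directly with the explicit formula~\eqref{E:beta-n0} for $\beta_{n_0}$ obtained from the center-manifold reduction of Section~\ref{S:local-Bifurcation}, and to track how $\beta$ and $L$ enter it.

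Write $f(u,v)=\frac{u^m}{(1+v)^\beta}$ and $g(u)=\nu u^\gamma$. The coefficient $\beta_{n_0}$ is built from Taylor coefficients of the nonlinearity at $(u^*,v^*)$ up to third order, together with the quadratic correction from the second-order terms of the center manifold (the modes $0$ and $2n_0$ generated by $\phi_{n_0}^2$). These Taylor coefficients include $f_u,f_v,f_{uu},f_{uv},f_{vv},f_{uuv},f_{uvv},f_{vvv}$, the derivatives of $g$, and $a\alpha(1+\alpha)u^{*\alpha-1}$, etc. Two structural observations drive the argument:
\begin{itemize}
\item[(a)] Each $v$-derivative of $f$ produces a factor $-\beta(1+v^*)^{-1}$, $\beta(\beta+1)(1+v^*)^{-2}$, and so on. Hence $f_v/f$ is linear in $\beta$, $f_{vv}/f$ and $f_{uv}/f$ are polynomials of degree at most two, and $f_{vvv}/f$ has degree three.
\item[(b)] At $\chi_0=\chi^*$ the product $\chi^* f(u^*,v^*)$ is independent of $\beta$, because $\chi^*$ carries exactly the factor $(1+v^*)^\beta$.
\end{itemize}
Consequently, after normalizing by the common factor $\chi^* f$ (or by $K$), every term in $\beta_{n_0}$ becomes a polynomial in $\beta$ whose coefficients do not depend on $\beta$.

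\textbf{Part (1).} I would show that degree-three terms in $\beta$ cannot appear. The cubic chemotactic term involves only second-order derivatives of $f$ multiplied by $v_x$ (third derivatives of $f$ do not enter, since the flux is $f\,v_x$). Products of two first-order quantities in the quadratic-correction part are also of degree at most two. Collecting the coefficient of $\beta^2$ gives $A$. I expect it to come from $f_{vv}$ and from the squares $(f_v)^2$ appearing in the center-manifold correction, and I would verify that
\[
A=-\,(\text{positive})\cdot\Big(\frac{\nu\gamma (u^*)^{\gamma-1}}{\mu+\lambda_{n_0}}\Big)^2 (1+v^*)^{-2}\cdot(\cdots),
\]
which should yield~\eqref{E:A-closed-form}. \textbf{This sign check is the main obstacle.} The $\beta^2$ contributions from the cubic term and from the mode-$0$/mode-$2n_0$ corrections must be combined, and it is not a priori clear that they do not cancel. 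I would first compute them separately in the $\phi_{n_0}^3$ projection, using $\int\phi_{n_0}^4=3L/8$ and $\int\phi_{n_0}^2\phi_{2n_0}\phi_{n_0}=L/8$, and then simplify with $\lambda_{2n_0}=4\lambda_{n_0}$.

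Once $A<0$ is established, the case list (i)--(iv) is elementary. A downward parabola restricted to $[0,\infty)$ is either negative throughout, tangent to zero at a single point, or positive on an interval $(\beta_-,\beta_+)$ with $\beta_-$ possibly truncated to $0$. These correspond to cases (iv), (iii), (ii), and (i), the last distinguished by whether $C=\beta_{n_0}(0)>0$.

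\textbf{Part (2).} I would substitute the defining relations of the bifurcation point into~\eqref{E:beta-n0}, namely $\chi^*K=(\lambda+a\alpha)(\mu+\lambda)/\lambda$ with $\lambda=\lambda_{n_0}$. I would also use the denominators arising from the inverted linear operators on modes $0$ and $2n_0$:
\begin{itemize}
\item on mode $2n_0$, $\sigma_{2n_0}(\chi^*)\ne0$, which after clearing $\mu+4\lambda$ is proportional to $-(4\lambda^2-a\alpha\mu)$;
\item on mode $0$, the eigenvalue $-a\alpha$, which is absent in the minimal model because of the mass constraint.
\end{itemize}
Clearing all denominators yields $P/Q$ with the stated $Q$, and degree counting gives $\deg P\le5$ (respectively $\le3$). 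The admissible range for $\lambda$ follows from $\chi_k<\chi_{k\pm1}$, which is equivalent to $(k-1)k\pi^4/L^4<a\alpha\mu<k(k+1)\pi^4/L^4$. In particular, $4\lambda^2>a\alpha\mu$ on this range, so $Q>0$. The asymptotics then come from $P/Q$ as $\lambda\to\infty$: with $n_0=1$ for small $L$ and $\deg P-\deg Q\le1$, the leading coefficient gives $c\,\pi^2/L^2$. As $L\to\infty$ over admissible lengths, $\lambda_{n_0}\to\sqrt{a\alpha\mu}$ (respectively, a limit in the minimal case), which gives $\beta_\infty$. Statement (i) follows because $\lambda\mapsto\pi^2k^2/L^2$ is monotone and $P$ has at most five (respectively three) roots. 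Statements (ii) and (iii) follow directly from the limits.
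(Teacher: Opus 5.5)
\textbf{Gap: the sign of $A$ is not established.} Your route is the paper's. You normalize by the $\beta$-independent quantity $\Pi=\chi^*\lambda_{n_0}(1+v^*)^{-\beta}$. Then $\sigma_{2n_0}(\chi^*)$ is $\beta$-independent, $p(\beta)=a_{2n_0,1}$ is affine, and $q(\beta)=\chi^*\Gamma^{(3)}_{n_0}$ is at most quadratic, since only $B_v,B_{uv},B_{vv}$ carry $\beta$ and no third derivatives of the mobility enter. Part (2) likewise matches the paper's substitution argument.

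The problem is that the only nontrivial claim of Part (1), $A<0$, is left as an open ``obstacle''. You do not identify what rules out the cancellation you worry about. The missing ingredient is $\sigma_{2n_0}(\chi^*)<0$, which follows from uniqueness of the bifurcation mode. You in fact write it down in Part (2): $\sigma_{2n_0}(\chi^*)=-3(4\lambda_{n_0}^2-a\alpha\mu)/(\mu+4\lambda_{n_0})$, which is negative on the admissible range, and it equals $-12\lambda_1^2/(\mu+4\lambda_1)$ in the minimal model.

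Here is how it closes the argument. Put $F=C^{(1)}_{n_0}$. Then $p'(\beta)=\Pi(u^*)^mF^2/\big((1+v^*)\sigma_{2n_0}(\chi^*)\big)$. The term $bA''p$ in $\beta_{n_0}$ is affine, so it does not affect $A$. Differentiating \eqref{E:Gamma-n0-explicit} twice gives
\[
q''=\frac{\Pi(u^*)^mF^3}{4(1+v^*)^2}-\frac{\Pi(u^*)^mF\,C^{(1)}_{2n_0}}{1+v^*}\,p'
=\frac{\Pi(u^*)^mF^3}{(1+v^*)^2}\Big(\frac14-\frac{(u^*)^mC^{(1)}_{2n_0}\Pi}{\sigma_{2n_0}(\chi^*)}\Big).
\]
The first term is the $B_{vv}$ contribution through $F(R-S/2)$. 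The second comes from the cross terms of $B_v$ with the $\beta$-linear part of $a_{2n_0,1}$, inside $(B_u+B_vF)G$ and $-FS/2$. Both are positive exactly because $\sigma_{2n_0}(\chi^*)<0$, so they reinforce rather than cancel. Then $A=-q''/2$ is \eqref{E:A-closed-form}, which is negative. Without this step, the ``downward parabola'' on which your whole case list (i)--(iv) rests is unproven.

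\textbf{Two smaller slips.}
\begin{enumerate}
\item Your condition for the admissible range is off. The correct equivalence is that $\chi_k<\chi_{k\pm1}$ holds iff $\lambda_{k-1}\lambda_k<a\alpha\mu<\lambda_k\lambda_{k+1}$, i.e.\ $(k-1)^2k^2\pi^4/L^4<a\alpha\mu<k^2(k+1)^2\pi^4/L^4$, not the bounds with $(k-1)k$ and $k(k+1)$. This is what produces $L_k$ and the stated $\lambda$-intervals. Comparing only neighbouring modes is justified because $\lambda\mapsto\lambda+a\alpha\mu/\lambda$ is convex, so $\chi_n$ is unimodal in $n$.
\item Case (iii) also arises from a transversal root at $\beta_0=0$ (when $C=0$ and $B<0$), not only from a tangency.
\end{enumerate}
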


Before stating our next result, we make the following remark on the effects of
$\beta$ and $L$ on the bifurcation of constant solutions.

\begin{remark}
  Let $(u^*,v^*)$ be a positive constant solution of \eqref{E:main-PE}.
  \begin{itemize}
    \item[(1)] Assume that $n_0 \ge 1$ is  a unique bifurcation mode.
      With all parameters except $\beta$ fixed, $\beta_{n_0}(\beta)$ has at most
      two nonnegative zeros. At these values, higher-order terms are needed for
      bifurcation analysis; we do not analyze these cases. Put
      $\chi^*(u^*,\beta) = \chi^*(u^*)$. Theorems \ref{T:local-bifurcation}
      and \ref{T:direction-change}(1)
      reveal the following two effects of $\beta$:
    \item[-] {Threshold:} $\chi^*(u^*,\beta)$ grows proportionally to $(1 +
      v^*)^\beta$ by~\eqref{E:chi-star}, so  large signal-dependence exponent $\beta$
      requires a
      larger $\chi_0$ for non-constant stationary solutions to bifurcate from $(u^*,v^*)$.
    \item[-] {Type:} $\beta_{n_0}(\beta) = A\beta^2 + B\beta + C$ determines
      whether the bifurcation is super-critical (positive) or sub-critical
      (negative). Since $A < 0$, sufficiently large $\beta$ gives sub-critical
      bifurcation, which indicates that large $\beta$ makes stable non-constant
      states more likely to appear abruptly rather than to grow continuously
      from the constant state.

    \item[(2)] Fix all parameters except for $L$. In the non-minimal model, when
      $L = L_k$ ($k \ge 1$), there are two bifurcation modes {$n_0=k$ and $k +
      1$.}  In this case, the positive constant solution $(u^*,v^*)$ loses its
      stability when $\chi_0$ passes through $\chi^*$ along two eigen-directions
      $\phi_{k}$ and $\phi_{k+1}$. We do not investigate bifurcation solutions
      in this case in this paper. With the other parameters fixed except $L$,
      Theorem~\ref{T:direction-change}(2) reveals the following effects of $L$:

    \item[-] In the non-minimal model, when $L$ increases along admissible lengths, the bifurcation
      threshold $\chi^*$ stays bounded, but bifurcation mode $n_0$ becomes
      larger and larger. Hence, a larger domain creates more complicated
      patterns.

    \item[-] The sign of the coefficient $c$ in
      Theorem~\ref{T:direction-change}(2) determines the bifurcation type for $0
      < L\ll 1$. In the non-minimal model, by some direct calculation, we have
      \begin{equation}\label{E:c-constant}
        c=\frac{\mathsf{P}-6\beta\gamma Dv^*/(1+v^*)}{24(u^*)^2}
        \qquad \text{and} \qquad \mathsf{P}=h(6\alpha-4\gamma-m+13),
      \end{equation}
      where $h = \gamma + m - 1$ and $D = \alpha + 2 - \gamma$. Suppose $D > 0$
      and $\mathsf{P} \ge 0$. Let $\beta_{\mathrm{sd}} =
      \frac{\mathsf{P}(1+v^*)}{6\gamma Dv^*}$.
      Then
      \begin{equation}\label{E:c-sd-factored}
        c=\frac{\gamma Dv^*}{4(u^*)^2(1+v^*)}(\beta_{\mathrm{sd}}-\beta).
      \end{equation}
      Thus, as $L \to 0+$, $\beta_{n_0} \to + \infty$ for $0 \le \beta <
      \beta_{\mathrm{sd}}$, so the small-domain bifurcation is super-critical,
      and $\beta_{n_0} \to - \infty$ for $\beta > \beta_{\mathrm{sd}}$, so the
      small-domain bifurcation is sub-critical. If instead $D > 0$ and
      $\mathsf{P} < 0$, then $c < 0$ for every $\beta \ge 0$, so the
      small-domain bifurcation is sub-critical.

    \item[-] The sign of $\beta_\infty$ determines the bifurcation type for
      large admissible $L$. For the non-minimal model, as $L \to \infty$, we
      have
      \[
        \beta_\infty
        =A_\infty\beta^2+B_\infty\beta+C_\infty,
        \qquad A_\infty<0,
      \]
      where the coefficients are independent of $\beta$. Indeed, this follows by
      evaluating the quadratic formula for $\beta_{n_0}(\beta)$ at $\lambda =
      \sqrt{a\alpha\mu}$. It follows that for relatively large $\beta$, the
      bifurcation type for large admissible $L$ is always sub-critical. But for
      relatively small $\beta$, the bifurcation type for large admissible $L$
      may be super-critical if $C_\infty > 0$.
  \end{itemize}
\end{remark}

We remark that the local bifurcation branches $\Gamma_{n_0}^\pm$ established in
Theorem~\ref{T:local-bifurcation} can be extended for $\chi_0$ not near $\chi^*$
by applying global bifurcation theory in the literature, for example,
\cite[Theorem~4.4]{shi.wang:09:on}. To be more precise, let
\begin{equation}\label{E:X-eq}
  X \coloneqq
  \begin{cases}
    \Big\{
    u\in H^2((0,L)) \,\Big|\,
    u_x(0)=u_x(L)=0
    \Big\},
    \qquad        & {\rm if}\quad a,b>0\cr\cr
    \Big\{
    u\in H^2((0,L)) \,\Big|\,
    u_x(0)=u_x(L)=0,\,\,\, \int_0^L u(x)dx=0
    \Big\},\qquad & {\rm if}\quad a=b=0
  \end{cases}
\end{equation}
with norm $\|u\| = \|u\|_{H^2((0,L))}$. For a given positive constant solution
$(u^*,v^*)$ of~\eqref{E:main-PE}, let
\begin{equation}\label{E:A-eq}
  \mathcal{A}=\mathcal{A}(u^*):=
  \{w+u^*\,|\, w\in X,\,\, w(x)+u^*>0\quad {\rm for}\quad x\in [0,L]\},
\end{equation}
$n_0$ be a bifurcation mode, and
\[
  W\coloneqq\big\{w\in X:\int_0^L w(x)\phi_{n_0}(x)\,dx=0\big\}.
\]
Since $\phi_{n_0} \in X$ and $\int_0^L\phi_{n_0}^2\,dx = L/2$, this is a closed
complement of $\operatorname{span}\{\phi_{n_0}\}$ in $X$.

Let $\Gamma_{n_0}^{\pm}(\chi_0) = (u^{\pm}(\chi_0),v^{\pm}(\chi_0))$, and
define the punctured local half-curves
\[
  \mathcal{G}_{n_0}^{\pm}
  \coloneqq
  \left\{
  (\chi_0,u^{\pm}(\chi_0)):
  \chi_0\in I_\varepsilon\setminus\{\chi^*\}
  \right\}.
\]
Here $I_\varepsilon$ and $\Gamma_{n_0}^{\pm}$ are as in
Theorem~\ref{T:local-bifurcation}, with $\varepsilon > 0$ chosen sufficiently
small. Equip $(0, \infty)\times\mathcal{A}$ with the topology induced by
$\mathbb{R}\times H^2(0,L)$, and let
\[
  \mathcal{S}
  =
  \left\{
  (\chi_0,u)\in (0,\infty)\times\mathcal{A}:
  u\ne u^*,\
  (u,v)\text{ solves the stationary system for some }v
  \right\}.
\]
Let $\mathcal{C}$ be the connected component of $\overline{\mathcal{S}}$
containing $(\chi^*,u^*)$, where the closure is taken in $\mathbb{R}\times
\mathcal{A}$. By Remark~\ref{R:local-bifurcation-minimal}(2),
\begin{equation}\label{E:closure-S}
  \overline{\mathcal{S}}\subset (0,\infty)\times {\mathcal{A}}.
\end{equation}

Then we have the following theorem, which will be proved in
Section~\ref{S:global-Bifurcation}.
\begin{theorem}[Extension of the local bifurcation branches]\label{T:global-extension}
  Let $(u^*,v^*)$ be a positive constant solution of~\eqref{E:main-PE}. Assume
  that $n_0$ is a unique bifurcation mode, and that the coefficient
  $\beta_{n_0}\not = 0$.

  Then the following hold.
  \begin{itemize}
    \item[(1)]
      Let $\mathcal{C}_{n_0}^{\pm}$ be the connected component of
      $\mathcal{C}\setminus\mathcal{G}_{n_0}^{\mp}$ containing
      $\mathcal{G}_{n_0}^{\pm}$. Each of these sets contains $(\chi^*,u^*)$ and
      is relatively closed in $(0, \infty)\times\mathcal{A}$. Moreover, each of
      the sets $\mathcal{C}_{n_0}^{+}$ and $\mathcal{C}_{n_0}^{-}$ satisfies one
      of the following alternatives:
      \begin{itemize}
        \item[(i)] $\mathcal{C}_{n_0}^{\pm}$ is not compact in $(0,
          \infty)\times\mathcal{A}$;
        \item[(ii)] $\mathcal{C}_{n_0}^{\pm}$ contains another point
          $\bigl(\widehat{\chi}_0, u^*\bigr)$ with $\widehat{\chi}_0 \ne
          \chi^*(u^*)$;
        \item[(iii)] $\mathcal{C}_{n_0}^{\pm}$ contains a point
          $\bigl(\widehat{\chi}_0, u^* + w\bigr)$ with $w \in W\setminus\{0\}$.
      \end{itemize}

    \item[(2)]  Assume $a, b > 0$, $0 < \gamma \le 1 + \alpha$, and ${1\le m\le
      \alpha+2}$. If $\mathcal{C}_{n_0}^{+}$ (respectively,
      $\mathcal{C}_{n_0}^{-}$) is not compact, then
      \begin{align}\label{E:bifurcation-branches}
        \sup\left\{ \chi_0:(\chi_0,u)\in\mathcal{C}_{n_0}^{+}
        \right\}=+\infty\quad {\rm (respectively,} \quad  \sup\left\{
        \chi_0:(\chi_0,u)\in\mathcal{C}_{n_0}^{-} \right\}=+\infty).
      \end{align}

    \item[(3)]  Assume $a = b = 0$, ${1\le m\le 2}$, and $0 < \gamma \le 1$.
      Then neither $\mathcal{C}_1^{+}$ nor $\mathcal{C}_1^{-}$ satisfies
      alternative~(ii) or~(iii) in (1), and hence both sets are not compact
      and~\eqref{E:bifurcation-branches} holds. Moreover, every nonconstant
      solution on $\mathcal{C}_1^+$ (respectively, $\mathcal{C}_1^-$) is
      strictly decreasing (respectively, strictly increasing) on $[0, L]$. For
      any sequence $(\chi_j,u_j) \in \mathcal{C}_1^+$ (respectively,
      $\mathcal{C}_1^-$) with $\chi_j \to \infty$,
      \[
        \int_0^L f(x)u_j(x)\,dx \longrightarrow Lu^*f(0)
        \qquad
        \left(\text{respectively, }Lu^*f(L)\right)
      \]
      for every $f \in C([0, L])$. In particular, $\max_{x\in[0, L]}u_j(x) \to
      \infty$.
  \end{itemize}
\end{theorem}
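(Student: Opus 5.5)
Part (1) follows from the global bifurcation theorem of Shi and Wang \cite[Theorem~4.4]{shi.wang:09:on}, applied after reducing the stationary problem to a single equation for $u$. We solve the $v$-equation explicitly: $v=\mathcal{V}(u):=\nu(\mu-\partial_{xx})^{-1}_{N}u^\gamma$, which is a smooth map $\mathcal{A}\to H^4$ because $u$ stays positive on $\mathcal{A}$. We then define
\[
F(\chi_0,w)=-(u^*+w)_{xx}+\chi_0\Big(\frac{(u^*+w)^m}{(1+\mathcal{V}(u^*+w))^\beta}\mathcal{V}(u^*+w)_x\Big)_x-a(u^*+w)+b(u^*+w)^{1+\alpha},
\]
mapping $\mathbb{R}\times(\mathcal{A}-u^*)\to Y$, where $Y=L^2$, or the zero-mean subspace of $L^2$ in the minimal case; that $F$ lands in $Y$ follows from integrating the divergence term. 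Next we verify the hypotheses of the Shi--Wang theorem:
\begin{itemize}
\item $F$ is $C^1$, and $F_w$ is Fredholm of index zero, being a compact perturbation of $-\partial_{xx}$ (plus a constant) acting from $X$ to $Y$.
\item $F_w$ is proper on closed bounded sets. This comes from elliptic regularity, since the principal part of the equation is the Laplacian.
\item The kernel of $F_w(\chi^*,0)$ is $\operatorname{span}\{\phi_{n_0}\}$. This uses \eqref{E:bifurcation-mode-eq2} together with the diagonalisation in cosines.
\item The transversality condition $F_{\chi_0 w}[\phi_{n_0}]\notin\operatorname{Range}F_w(\chi^*,0)$. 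This reduces to $\alpha_{n_0}\neq0$, which holds by \eqref{E:alpha-n0}.
\end{itemize}
The theorem then gives a connected component obeying the Rabinowitz-type trichotomy, with (iii) expressed through the complement $W$. To split the component into the $\pm$ pieces, we identify the local curve from the theorem with $\Gamma^\pm_{n_0}$ of Theorem~\ref{T:local-bifurcation}; this works because near $(\chi^*,u^*)$ the solution set is exactly the pitchfork (the center-manifold reduction with $\beta_{n_0}\ne0$). Relative closedness in $(0,\infty)\times\mathcal{A}$ uses \eqref{E:closure-S}.

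Part (2) asks us to exclude a loss of compactness while $\chi_0$ stays bounded. Suppose $\mathcal{C}^\pm_{n_0}$ is noncompact but $\chi_0\le M$ on it. We then need a priori bounds showing that the component lies in a compact subset of $(0,\infty)\times\mathcal{A}$. This requires three things:
\begin{itemize}
\item an $L^\infty$ upper bound for $u$;
\item a positive lower bound $\inf u\ge\delta>0$, so that the component cannot reach $\partial\mathcal{A}$;
\item a lower bound $\chi_0\ge\underline{\chi}>0$, which by Remark~\ref{R:local-bifurcation-minimal}(2) holds near $\chi_0=0$.
\end{itemize}
For the upper bound we use the stationary version of the Part~I boundedness estimates (an $L^p$ energy argument). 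Here the assumptions $m\le\alpha+2$ and $\gamma\le1+\alpha$ are exactly what lets the logistic damping $bu^{1+\alpha}$ absorb the cross-diffusion term, uniformly for $\chi_0\le M$. In one dimension, $H^2$ bounds then follow by bootstrapping, and compactness of $H^2\hookrightarrow C^1$ combined with equation regularity gives precompactness. The lower bound should come from a Harnack inequality for the linear equation $u_{xx}-(c(x)u)_x+d(x)u=0$ with bounded coefficients, combined with $\int bu^{1+\alpha}=\int au$, which forces $u$ to be nontrivial. I expect the upper bound to be the main obstacle: the Part~I estimates concern time-dependent solutions, so they must be redone for the stationary problem with constants that are uniform in $\chi_0\in[\underline\chi,M]$.

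Part (3) concerns the minimal model with $n_0=1$. The strategy is to show that monotonicity is preserved along the branch:
\begin{itemize}
\item Near the bifurcation point, $u^+\approx u^*+s\cos(\pi x/L)$ is strictly decreasing.
\item The sets of strictly decreasing and of strictly increasing solutions are open in $\mathcal{C}^+_1$. To see this, differentiate the equation: $u_x$ satisfies a linear second-order equation with $u_x(0)=u_x(L)=0$, so the strong maximum principle and Hopf lemma give $u_x<0$ in the interior together with $u_{xx}(0)<0<u_{xx}(L)$ wherever $u_x\le0$.
\item These sets are also relatively closed away from the constant solution.
\end{itemize}
Connectedness then keeps every nonconstant solution on $\mathcal{C}^+_1$ strictly decreasing. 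Monotone solutions are not $\cos$-orthogonal in the manner required by (iii), because the $\phi_1$ coefficient $\int u\cos(\pi x/L)$ is strictly positive. They also cannot return to $u^*$ at some $\widehat\chi_0$: that would need $\chi_{n}=\widehat\chi_0$ with an eigenfunction of mode $n\ge2$, which is not monotone. Hence alternative (i) holds. Unboundedness in $\chi_0$ follows from the minimal-model analogue of the bounds in Part (2), which are valid for $m\le2$ and $\gamma\le1$ at fixed mass.

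For the spike limit, integrate the stationary equation once to obtain $u_x=\chi_0u^m(1+v)^{-\beta}v_x$. Because $u$ is decreasing, for $x\ge\delta$ we have $u(x)\le Lu^*/\delta$, so $v$ is bounded on $[\delta,L]$. We then argue that $v_x\le-c<0$ on compact subsets of $(0,L]$ minus the endpoint. Integrating gives
\[
\int_{u(\delta)}^{u(x)}s^{-m}\,ds\lesssim-\chi_0c\,(x-\delta),
\]
and for $1\le m\le2$ this forces $u\to0$ uniformly on $[\delta,L]$ as $\chi_0\to\infty$. Mass conservation then yields the weak convergence $u_j\rightharpoonup Lu^*\delta_0$, and since the mass concentrates at $0$, $\max u_j\to\infty$. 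The decreasing case is symmetric.
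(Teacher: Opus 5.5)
Your part (1), and the monotonicity and exclusion arguments in part (3), follow the paper. Where the paper uses ODE uniqueness for $y=v_x$ to show that zeros are simple, you use the maximum principle and Hopf lemma, which is equivalent here. There are two genuine gaps.

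\textbf{The concentration step in part (3).} It rests on the claim that $v_x\le -c<0$ on compact subsets of $(0,L)$, uniformly along the sequence. You give no argument for this, and for $\gamma<1$ it is false. Once the conclusion holds, for every $\delta\in(0,L)$
\[
\int_0^L u_j^\gamma\,dx\le \delta^{1-\gamma}(Lu^*)^\gamma+L\,u_j(\delta)^\gamma ,
\]
so $\int_0^L u_j^\gamma\,dx\to0$, and therefore $\|(v_j)_x\|_\infty\le 2\nu\int_0^L u_j^\gamma\,dx\to0$ uniformly. For $\gamma=1$ the bound is true, but only as a consequence of the concentration you are trying to prove. In fact no lower bound on $|v_x|$ is needed. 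The function $w_j=-(v_j)_x$ is positive on $(0,L)$ and solves $-w_j''=\bigl(\chi_j\nu\gamma u_j^{m+\gamma-1}(1+v_j)^{-\beta}-\mu\bigr)w_j$.
\begin{itemize}
\item[(a)] If $u_j(\delta)\ge\varepsilon$ along a subsequence, monotonicity gives $u_j\ge\varepsilon$ on $[\delta/2,\delta]$.
\item[(b)] Since $\|v_j\|_\infty$ is bounded independently of $\chi_j$, the potential then exceeds the first Dirichlet eigenvalue $4\pi^2/\delta^2$ of that interval for large $j$.
\item[(c)] Testing against $\sin(2\pi(x-\delta/2)/\delta)$ then contradicts $w_j>0$ (Sturm comparison).
\end{itemize}
Hence $u_j(\delta)\to0$, so $\int_\delta^L u_j\,dx\le(L-\delta)u_j(\delta)\to0$, and the fixed mass gives the Dirac limit.

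\textbf{The a priori bound behind part (2) and the unboundedness in part (3).} You leave this open, and the mechanism you name is not the operative one: in the minimal model there is no logistic term at all. Moreover, the standard $L^p$/Gagliardo--Nirenberg argument becomes critical on the boundary of the admitted range, where absorption needs smallness of $\chi_0$. This happens at $m=\alpha+2$, $\gamma=1+\alpha$ in the non-minimal model, and at $m=2$, $\gamma=1$ (the one-dimensional critical case) in the minimal model.

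The paper's argument is elementary and genuinely one-dimensional.
\begin{itemize}
\item[(a)] The stationary identity $a\int_0^L u\,dx=b\int_0^L u^{1+\alpha}\,dx$ (or the fixed mass) and H\"older's inequality give $\int_0^L u^\eta\,dx\le L(u^*)^\eta$ for $0\le\eta\le1+\alpha$ (respectively $0\le\eta\le1$).
\item[(b)] Taking $\eta=\gamma$ yields $\|v_x\|_\infty\le 2\nu\int_0^L u^\gamma\,dx$, and then a bound on $\|v\|_\infty$.
\item[(c)] Integrating the $u$-equation once gives the linear ODE $u_x=p(x)u-F(x)$, with $p=\chi_0u^{m-1}(1+v)^{-\beta}v_x$ and $\|F\|_\infty<2aLu^*$. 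Here $\|p\|_{L^1}\le\chi_0\|v_x\|_\infty\int_0^L u^{m-1}\,dx$, which is bounded exactly when $0\le m-1\le 1+\alpha$.
\item[(d)] Variation of constants from a point where $u<u^*$ then bounds $u$ and $u_x$ uniformly for $\chi_0\le\overline{X}$, and elliptic regularity gives the $C^{2+\theta}$ bounds.
\end{itemize}
This is precisely where the hypotheses $\gamma\le1+\alpha$ and $1\le m\le\alpha+2$ (respectively $\gamma\le1$, $1\le m\le2$) come from. With these bounds, your Harnack argument for a positive lower bound works; the paper instead passes to the limit and uses the strong maximum principle.
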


We remark that Theorem~\ref{T:global-extension}(3) indicates the monotone steady
states in $\mathcal{C}_1^+$ (resp. $\mathcal{C}_1^-$) form a spike as $\chi_0
\to \infty$. In general, we have the following theorem concerning positive
nonconstant stationary solutions of~\eqref{E:main-PE} for large $\chi_0$, which
is proved in Section~\ref{S:spiky-solu}.

\begin{theorem}[Spike-like stationary solutions]\label{T:spiky-solu-thm}
  Let $(u^*,v^*)$ be a positive constant solution of~\eqref{E:main-PE} and
  $(u_n,v_n)$ be positive nonconstant classical stationary solutions
  of~\eqref{E:main-PE} with $\chi_0 = \chi_n \to \infty$ and $\int_0^L u_n\,dx =
  Lu^*$ for every $n$ in the case that $a = b = 0$. If
  \[
    \liminf_{n\to\infty}\max_{[0,L]}u_n>u^*,
  \]
  then there is a subsequence $\{u_{n_k}\}$ and a point $x^* \in [0, L]$ such
  that, for every $\delta > 0$,
  \begin{equation}\label{E:spiky-thm-eq2}
    \max_{[x^*-\delta,x^*+\delta]\cap[0,L]}|(u_{n_k})_x|
    \longrightarrow\infty\quad {\rm as}\quad n_k\to\infty.
  \end{equation}
  Moreover, in the non-minimal model, if $0 < \gamma < 1 + \alpha$, then
  \begin{equation}\label{E:spiky-thm-eq3}
    u_{n_k}^\gamma\rightharpoonup c\,\,\text{ in }\,\, L^q,
    \quad v_{n_k}\to\frac\nu\mu c\,\, \text{ in }\,\, C^1,
    \quad v_{n_k}\rightharpoonup\frac\nu\mu c\,\, \text{ in }\,\, W^{2,q}
  \end{equation}
  for $q = \frac{1+\alpha}{\gamma}$ and some constant $0 \le c \le
  (u^*)^\gamma$.
\end{theorem}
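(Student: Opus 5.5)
We argue by contradiction for \eqref{E:spiky-thm-eq2}. The idea is to show that if the derivatives stay bounded near every point, then $u_n$ must converge to the constant $u^*$, which contradicts the hypothesis on $\max u_n$.

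\textbf{Step 1: uniform a priori bounds.} Set $\delta_0 := \liminf_n \max u_n - u^* > 0$, passing to $+\infty$ if necessary. We first collect bounds that do not depend on $\chi_n$.
\begin{itemize}
\item[(a)] Mass. In the minimal case $\int_0^L u_n = Lu^*$ by assumption. In the non-minimal case, integrating the $u$-equation gives $a\int u_n = b\int u_n^{1+\alpha}$. Jensen's inequality then yields $\int u_n^{1+\alpha}\le (a/b)^{(1+\alpha)/\alpha}L$, so $\int u_n \le Lu^*$.
\item[(b)] Signal. If $\gamma\le 1+\alpha$, then $u_n^\gamma$ is bounded in $L^{q}$ with $q=(1+\alpha)/\gamma\ge 1$. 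Elliptic regularity in one dimension then bounds $v_n$ in $W^{2,q}\subset C^1$, uniformly in $n$.
\end{itemize}

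\textbf{Step 2: contradiction argument for \eqref{E:spiky-thm-eq2}.} Suppose that for every $x\in[0,L]$ there are $\delta_x>0$ and a subsequence along which $\max_{[x-\delta_x,x+\delta_x]}|(u_n)_x|$ stays bounded. Cover $[0,L]$ by finitely many such neighbourhoods and use a diagonal argument, fixing the subsequence at each stage. This gives a single subsequence with $\|(u_n)_x\|_\infty\le M$. If no single point $x^*$ works for \emph{some} subsequence, then this global bound holds along a further subsequence.

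Combined with the mass bound from Step 1(a), $u_n$ is bounded in $W^{1,\infty}$. By Arzel\`a--Ascoli, $u_n\to \bar u$ uniformly along a subsequence, and $\max\bar u\ge u^*+\delta_0$, so $\bar u$ is nonconstant.

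Next we extract a flux identity. Integrate the stationary $u$-equation from $0$ to $x$ using the Neumann conditions:
\[
(u_n)_x-\chi_n\frac{u_n^m}{(1+v_n)^\beta}(v_n)_x=-\int_0^x(au_n-bu_n^{1+\alpha}).
\]
The right-hand side and $(u_n)_x$ are bounded. Since $u_n$ is bounded above and below away from $0$ on compacts where $\bar u>0$, and $v_n$ is bounded, dividing by $\chi_n$ gives $(v_n)_x\to 0$ uniformly on the set where $\bar u>0$. Then $v_n\to \bar v$ in $C^1$, where $\bar v$ solves $\bar v''-\mu\bar v+\nu\bar u^\gamma=0$ with $\bar v'=0$ on $\{\bar u>0\}$. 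Hence $\bar u^\gamma=\frac{\mu}{\nu}\bar v$ is constant on each component of $\{\bar u>0\}$.

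By continuity and the identity $\nu\bar u^\gamma=\mu\bar v-\bar v''$, one shows $\bar u$ is a positive constant on $[0,L]$. The point is that $\bar v$ is $C^1$ and locally constant on an open dense set, which forces $\bar u$ to be continuous and locally constant, hence constant. The constant must be $u^*$: in the minimal case by the mass constraint, and in the non-minimal case by the integrated equation $a\int\bar u=b\int\bar u^{1+\alpha}$ with $\bar u$ constant and positive (zero is excluded by the maximum bound). This contradicts $\max\bar u\ge u^*+\delta_0$.

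\textbf{Step 3: weak limits in \eqref{E:spiky-thm-eq3}.} By Step 1(b), $u_{n_k}^\gamma$ is bounded in $L^q$. Since $q>1$ when $\gamma<1+\alpha$, the sequence is weakly compact, so $u_{n_k}^\gamma\rightharpoonup g$ and $v_{n_k}\to \bar v$ in $C^1$ and weakly in $W^{2,q}$. It remains to show $g$ is constant, which is the main obstacle because $u_n$ itself has no compactness. The plan is to use the flux identity again: divide by $\chi_n$ and use $u_n$-integrability to get $\int u_n^m|(v_n)_x|/(1+v_n)^\beta\,dx\to 0$ in a suitable weighted sense.

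The harder part is to turn this into $\bar v'\equiv 0$. On the set where $\bar v'\ne 0$, $u_n^m\to 0$ in measure, so $u_n^\gamma\to 0$ there by uniform integrability, giving $g=0$ there. Then $\bar v''=\mu\bar v>0$ on that set. An ODE argument on a maximal interval where $\bar v'\neq0$, together with the Neumann data, should rule this out, forcing $\bar v'\equiv0$. Therefore $\bar v=\frac{\nu}{\mu}c$ with $c=g$ constant. Finally, $c\le (u^*)^\gamma$ follows from $\int u_n^\gamma\le L^{1-\gamma/(1+\alpha)}\big(\int u_n^{1+\alpha}\big)^{\gamma/(1+\alpha)}$ together with the bound $\int u_n^{1+\alpha}\le L(u^*)^{1+\alpha}$ from Step 1(a).
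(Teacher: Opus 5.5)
Your argument is correct in substance, and for \eqref{E:spiky-thm-eq2} it takes a different route from the paper. Two steps need repair.

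\textbf{The covering/diagonal step in Step~2 does not work as written.} Each radius $\delta_x$ comes with its own subsequence. Once you pass to the subsequence controlling the first ball, the negation of \eqref{E:spiky-thm-eq2} applied at the next point may only give a smaller radius, so the finite cover you fixed is lost. Replace it by the observation that \eqref{E:spiky-thm-eq2} is equivalent to $\sup_n\|(u_n)_x\|_\infty=\infty$. Indeed, in that case take a subsequence with $\|(u_n)_x\|_\infty\to\infty$, points $y_n$ where $|(u_n)_x|$ is maximal, and a further subsequence with $y_n\to y^*$. For every $\delta>0$ the window $[y^*-\delta,y^*+\delta]\cap[0,L]$ eventually contains $y_n$, so the maximum of $|(u_n)_x|$ over it tends to infinity. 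Hence the negation gives a uniform gradient bound $M$ for the whole sequence, and $\max u_n\le \min u_n+LM\le u^*+LM$.

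Step~1(b) is not available in this part: there is no restriction on $\gamma$, and the minimal model has no $L^{1+\alpha}$ bound. It is also not needed, since the $L^\infty$ bound on $u_n$ bounds $v_n$ in $W^{2,\infty}$ for every $\gamma>0$.

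\textbf{The ``open dense'' claim is not justified at that point}, since a priori $\bar u$ could vanish on an interval. What you actually have is that $\bar u$ is a positive constant on each component of the open set $\{\bar u>0\}$. By continuity each component is then also closed in $[0,L]$, hence all of $[0,L]$. After that, your identification of the constant as $u^*$ goes through.

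\textbf{Comparison with the paper.} The paper never needs a global gradient bound. It takes $x^*$ as a limit of maximum points $x_n$ of $u_n$. A gradient bound $K$ on a single window $I$ around $x^*$ already gives $\max u_n\le Lu^*/|I|+K|I|$. Then Lemma~\ref{L:minimal-spike-limit} (constant weak limit of $u_n^\gamma$) together with uniform convergence on $I$ identifies the constant as $\ell^\gamma$, where $\ell$ is the limit of $\max u_n$. The bound $u_n\le\max u_n$ upgrades this to $u_n^\gamma\to\ell^\gamma$ in $L^1$, contradicting $\int u_n\le Lu^*$.

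Your version replaces that lemma by Arzel\`a--Ascoli on all of $[0,L]$ plus a continuity argument, which is more elementary for the first part. The paper's version buys two things: it locates the gradient blow-up at the limit of the peaks of $u_n$, a more informative $x^*$ than a limit of maximizers of $|(u_n)_x|$, and it lets one lemma serve both parts.

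\textbf{The second part.} For \eqref{E:spiky-thm-eq3} your plan is essentially the paper's lemma. The ``suitable weighted sense'' is made precise by testing the flux identity with a nonnegative cutoff supported where $\bar v'$ has a fixed sign and integrating by parts, so that only $\int u_n\le Lu^*$ and $\|F_n\|_\infty$ enter. The only difference is the final step, and yours does close. On a component of $\{\bar v'\neq0\}$ one has $\bar v''=\mu\bar v\ge0$ a.e.; strict positivity of $\bar v$ is not needed. So $\bar v'$ is nondecreasing there and vanishes at both ends, which is impossible. The paper instead uses that $(\bar v')^2-\mu\bar v^2$ is constant.
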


We make the following remark on Theorem~\ref{T:spiky-solu-thm} before stating
our last result.

\begin{remark}\label{R:spike-solution-rk}

  \begin{itemize}
    \item[(1)] Theorem~\ref{T:spiky-solu-thm} applies to a branch from
      Theorem~\ref{T:global-extension}(1) if it extends to arbitrarily large
      $\chi_0$ and $\liminf_{\chi_0\to\infty}\max_{[0,L]}u(\cdot;\chi_0) > u^*$.
      In the non-minimal model, whether there are positive non-constant
      stationary solutions satisfying the conditions in
      Theorem~\ref{T:spiky-solu-thm} remains open. Our numerical experiments
      presented in Section~\ref{SS:bifurcation-numerics} provide numerical
      evidence consistent with such behavior.

    \item[(2)] For the non-minimal model, the spikes in
      Theorem~\ref{T:spiky-solu-thm} cannot carry a Dirac mass for any $\alpha,
      \gamma > 0$. Indeed, the stationary logistic identity $ a\int_0^L u_n\,dx
      = b\int_0^L u_n^{1+\alpha}\,dx$ gives $\int_0^L u_n^{1+\alpha}\,dx <
      L\left(u^*\right)^{1+\alpha}$ by Lemma~\ref{L:spiky-solu-lm1}(1).
      H\"older's inequality therefore yields
      \begin{equation*}%\label{E:no-delta-concentration}
        \int_{(x^*-\delta,x^*+\delta)\cap(0,L)}u_n\,dx
        \le(2\delta)^{\frac{\alpha}{1+\alpha}}\|u_n\|_{L^{1+\alpha}}
        <u^* L^{\frac{1}{1+\alpha}}(2\delta)^{\frac{\alpha}{1+\alpha}}.
      \end{equation*}
      Thus, the mass in shrinking neighborhoods tends to zero uniformly in $n$,
      even though the gradients diverge.

    \item[(3)]   For nonconstant solutions $(\chi_n,u_n) \in
      \mathcal{C}_1^{\pm}$ under the assumptions of
      Theorem~\ref{T:global-extension}(3), $\chi_n \to \infty$ implies $
      \max_{[0,L]}u_n \longrightarrow \infty$. Thus, the liminf condition in
      Theorem~\ref{T:spiky-solu-thm} is automatic for these sequences.
  \end{itemize}
\end{remark}

The theoretical results stated in the above provide several insights into the
bifurcation of positive constant solutions of~\eqref{E:main-PE}, including the
finding of the number $\beta_{n_0}$ whose sign determines the local bifurcation
type, the effects of $\beta$ and $L$ on the bifurcation, the extension of local
bifurcation branches, and existence of spike-like solutions.
Theorem~\ref{T:global-extension}(3) proves endpoint Dirac-mass concentration for
the specified minimal-model first-mode branches. The existence and limiting
behavior of spike-like solutions outside these hypotheses remain questions for
further study. In this paper, we investigate large-sensitivity profiles
numerically.

To carry out numerical analysis of the bifurcation and large-sensitivity
profiles, we discretize the space variable in~\eqref{E:main-PE} by a
finite-difference method on $N$ uniform subintervals, with $N + 1$ grid points,
to obtain a system of $N + 1$ coupled ordinary differential equations
(see~\eqref{E:main-DIS} for the non-minimal model
and~\eqref{E:main-DIS-conservative} for the minimal model). Let $(u^*,v^*)$ be a
positive constant equilibrium of~\eqref{E:main-PE}. For each $N \ge 1$, set $h =
L/N$. Note that the eigenvalues of the discrete Laplacian $-\Delta_h$ are given
by
\begin{equation}\label{E:disc-eigenvalues}
  \lambda_{0,N}^{\rm disc}=0,
  \qquad
  \lambda_{n,N}^{\rm disc}
  =\frac{4}{h^2}\sin^2\left(\frac{n\pi}{2N}\right),
  \qquad 1\le n\le N.
\end{equation}
It is not difficult to see that the eigenvalues of the linearization of the
semidiscrete system~\eqref{E:main-DIS} at $(u^*,v^*)$ are
\begin{equation}\label{E:disc-growth-rates}
  \sigma_{n,N}^{\rm disc}(\chi_0)
  \coloneqq
  -\lambda_{n,N}^{\rm disc}-a\alpha
  +\frac{\chi_0\nu\gamma (u^*)^{m+\gamma-1}}
  {(1+v^*)^\beta}
  \frac{\lambda_{n,N}^{\rm disc}}
  {\mu+\lambda_{n,N}^{\rm disc}},
  \qquad 0\le n\le N.
\end{equation}
In particular, $\sigma_{0,N}^{\rm disc} = - a\alpha$. For each $1 \le n \le N$,
the equation $\sigma_{n,N}^{\rm disc}(\chi_0) = 0$ determines the instability
threshold for that mode. The discrete instability threshold is therefore
\begin{equation}\label{E:disc-threshold}
  \chi_N^{*,{\rm disc}}(u^*,\beta)
  \coloneqq
  \min_{1\le n\le N}
  \frac{(1+v^*)^\beta}{\nu\gamma (u^*)^{m+\gamma-1}}
  \frac{\left(\lambda_{n,N}^{\rm disc}+a\alpha\right)
  \left(\mu+\lambda_{n,N}^{\rm disc}\right)}
  {\lambda_{n,N}^{\rm disc}}.
\end{equation}
In the minimal case, this threshold is understood on the fixed-mass subspace, so
that the neutral constant mode is excluded. The following proposition, proved in
Subsection~\ref{SS:numerical-scheme}, establishes convergence to the continuous
instability threshold.
\begin{proposition}[Consistency of thresholds]\label{P:semidiscrete-threshold} Fix $m, \alpha, \mu, \nu, \gamma > 0$ and
  $\beta \ge 0$, and assume either $a, b > 0$ or $a = b = 0$. Then
  \begin{equation}\label{E:disctere-limit-eq}
    \lim_{N\to\infty}
    \chi_N^{*,{\rm disc}}(u^*,\beta)
    =
    \chi_{a,b,\beta}^*(u^*).
  \end{equation}
\end{proposition}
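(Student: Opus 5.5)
We compare the continuous threshold, an infimum over the continuous eigenvalues, with the discrete one, a minimum over the discrete eigenvalues. Write $K=\nu\gamma (u^*)^{m+\gamma-1}/(1+v^*)^\beta>0$ and define, for $\lambda>0$,
\[
  g(\lambda)\coloneqq \frac{(\lambda+a\alpha)(\mu+\lambda)}{K\lambda}
  =\frac{1}{K}\Big(\lambda+a\alpha+\mu+\frac{a\alpha\mu}{\lambda}\Big).
\]
Then $\chi^*_{a,b,\beta}(u^*)=\inf_{n\ge1}g(\lambda_n)$ and $\chi_N^{*,\rm disc}(u^*,\beta)=\min_{1\le n\le N}g(\lambda^{\rm disc}_{n,N})$. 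The function $g$ is continuous and convex on $(0,\infty)$. It tends to $+\infty$ as $\lambda\to\infty$, and also as $\lambda\to0^+$ when $a>0$. When $a=0$ it is increasing, $g(\lambda)=(\lambda+\mu)/K$. In either case the infimum over $n\ge1$ is attained at the bifurcation mode $n_0$, and $n_0$ is independent of $N$. We prove two inequalities.

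\emph{Upper bound.} Fix $n=n_0$. For $N\ge n_0$ the discrete eigenvalue $\lambda^{\rm disc}_{n_0,N}$ is admissible in the minimum. Since $\sin s = s+O(s^3)$,
\[
  \lambda^{\rm disc}_{n_0,N}=\frac{4N^2}{L^2}\sin^2\!\Big(\frac{n_0\pi}{2N}\Big)\longrightarrow \frac{n_0^2\pi^2}{L^2}=\lambda_{n_0}.
\]
By continuity of $g$ at $\lambda_{n_0}>0$,
\[
  \limsup_{N\to\infty}\chi_N^{*,\rm disc}\le \lim_{N\to\infty} g(\lambda^{\rm disc}_{n_0,N})=g(\lambda_{n_0})=\chi^*.
\]

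\emph{Lower bound.} This is the main step, because the discrete eigenvalues do not coincide with continuous ones and could a priori fall near a global minimizer of $g$ that lies between $\lambda_{n}$ and $\lambda_{n+1}$.

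In the minimal case ($a=0$), $g$ is increasing. Using $\sin s\ge 2s/\pi$ on $[0,\pi/2]$ we get $\lambda^{\rm disc}_{n,N}\ge 4n^2/L^2$, and the smallest discrete eigenvalue satisfies $\lambda^{\rm disc}_{1,N}\to\lambda_1$. Hence $\chi_N^{*,\rm disc}=g(\lambda^{\rm disc}_{1,N})\to g(\lambda_1)=\chi^*$, and the fixed-mass constraint only removes the $n=0$ mode.

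In the non-minimal case we split the indices at a fixed cutoff $M>n_0$.

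For $n\le M$, each $\lambda^{\rm disc}_{n,N}\to\lambda_n$. Finitely many such limits give
\[
  \liminf_{N\to\infty}\min_{n\le M} g(\lambda^{\rm disc}_{n,N})\ge\min_{n\le M}g(\lambda_n)\ge\chi^*.
\]

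For $M<n\le N$, the Jordan inequality gives $\lambda^{\rm disc}_{n,N}\ge 4n^2/L^2\ge 4M^2/L^2$. Choose $M$ so large that $4M^2/L^2$ exceeds the point beyond which $g$ is increasing and $g(4M^2/L^2)\ge\chi^*$. This is possible because $g(\lambda)\ge \lambda/K\to\infty$. Then every $g(\lambda^{\rm disc}_{n,N})$ with $M<n\le N$ is at least $\chi^*$.

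Combining the two ranges gives $\liminf_{N\to\infty}\chi_N^{*,\rm disc}\ge\chi^*$. Together with the upper bound this yields \eqref{E:disctere-limit-eq}. The key point is the uniform lower bound $\lambda^{\rm disc}_{n,N}\ge 4n^2/L^2$, which controls high modes uniformly in $N$. Only finitely many modes are then left, and for those pointwise convergence $\lambda^{\rm disc}_{n,N}\to\lambda_n$ suffices.
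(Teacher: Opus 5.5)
Your proof is correct and is essentially the paper's argument. The Jordan bound $\lambda^{\rm disc}_{n,N}\ge 4n^2/L^2$, combined with the linear growth of $g$, discards all high modes uniformly in $N$; pointwise convergence $\lambda^{\rm disc}_{n,N}\to\lambda_n$ on the finitely many remaining modes then finishes the proof. The differences are only organizational: the paper truncates both thresholds to one common finite set of modes at once, using $\sup_N F(\lambda^{\rm disc}_{1,N})<\infty$ as the comparison level, whereas you prove the $\limsup$ bound separately via $n_0$ and use $\chi^*$ itself as the comparison level for the $\liminf$ bound.
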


We carry out several numerical experiments using the above mentioned numerical
scheme. Our experiments suggest that nonconstant profiles persist as $\chi_0$
increases and that their maxima remain separated from the equilibrium value
$u^*$ in the tested regimes (see
Figures~\ref{F:beta2-super-non-minimal}--\ref{F:beta5-sub-minimal}), where $u^*
= (a/b)^{1/\alpha}$ for the non-minimal model and is the prescribed average for
the minimal model.

\smallskip

The rest of the paper is organized as follows. In
Section~\ref{S:local-Bifurcation}, we prove Theorem~\ref{T:local-bifurcation}.
In Section~\ref{S:direction-change}, we prove Theorem~\ref{T:direction-change}.
In Section~\ref{S:global-Bifurcation}, we prove
Theorem~\ref{T:global-extension}. We prove Theorem~\ref{T:spiky-solu-thm} in
Section~\ref{S:spiky-solu}. In Section~\ref{SS:bifurcation-numerics}, we
establish the threshold consistency in
Proposition~\ref{P:semidiscrete-threshold} and present several numerical
experiments.

\section{Local bifurcation of constant solutions  and proof of  Theorem~\ref{T:local-bifurcation}}
\label{S:local-Bifurcation}

In this section, we study bifurcation of positive constant solutions $(u^*,v^*)$
of~\eqref{E:main-PE} with $a, b > 0$ or $a = b = 0$ and $\Omega = (0,L)$. We
prove Theorem~\ref{T:local-bifurcation} in
Subsections~\ref{SS:local-bifurcation-proof-1}--\ref{SS:local-bifurcation-proof-2}.
We begin with a lemma collecting several technical trigonometric identities that
will be used later in the proofs.

\begin{lemma}\label{L:trig-integrals}
  Let $(u_i)_{i\ge 0}$, $(v_j)_{j\ge 0}$, and $(w_\ell)_{\ell\ge 0}$ belong to
  $\ell^1(\mathbb{N}_0)$. We have
  \begin{equation}\label{E:cosine-sine-sine}
    \int_0^L
    \Big(\sum_{i=0}^{\infty}u_i\cos(\tfrac{i\pi x}{L})\Big)\sin(\tfrac{n\pi
    x}{L})\sin(\tfrac{k\pi x}{L})\,dx
    =
    \frac{L}{4}
    \left(
    u_{|n-k|}-u_{n+k}
    +\delta_{nk}u_0
    \right),\qquad n,k\ge 1,
  \end{equation}
  \begin{align}\label{E:two-cosines-two-sines}
    \int_0^L
    &
    \Big(\sum_{i=0}^{\infty}u_i\cos(\tfrac{i\pi
    x}{L})\Big)
    \Big(\sum_{j=0}^{\infty}v_j\cos(\tfrac{j\pi
    x}{L})\Big)\sin(\tfrac{n\pi
    x}{L})\sin(\tfrac{k\pi
    x}{L})\,dx\nonumber \\
    & =\frac{L}{8}
    \Big(
    \sum_{\substack{ i-j =    n-k }} u_i v_j
    + \sum_{\substack{ i-j = -(n-k) }} u_i v_j
    + \sum_{\substack{ i+j =    n-k }} u_i v_j
    + \sum_{\substack{ i+j = -(n-k) }} u_i v_j \notag \\
    &
    - \sum_{\substack{ i-j =    n+k }} u_i v_j
    - \sum_{\substack{ i-j = -(n+k) }} u_i v_j
    - \sum_{\substack{ i+j =    n+k }} u_i v_j
    \Big), \qquad n,k\ge 1,
  \end{align}
  \begin{align}\label{E:cos-cos-cos}
    \int_0^L &
    \Big( \sum_{i=0}^{\infty} u_i \cos\Big(\tfrac{i\pi x}{L}\Big) \Big)
    \Big( \sum_{j=0}^{\infty} v_j \cos\Big(\tfrac{j\pi x}{L}\Big) \Big)
    \cos\Big(\tfrac{k\pi x}{L}\Big)\,dx\nonumber \\
    & =
    \frac{L}{4}\Big(
    \sum_{\substack{ i-j =  k }} u_i v_j
    \;+\;
    \sum_{\substack{ i-j = -k }} u_i v_j
    \;+\;
    \sum_{\substack{ i+j =  k }} u_i v_j
    \Big)
    +\frac{L}{4}\mathbf{1}_{\{k=0\}}u_0v_0, \qquad k\ge0,
  \end{align}
  and
  \begin{align}\label{E:cos-cos-cos-cos}
    \int_{0}^{L}
    & \Big( \sum_{i=0}^{\infty}
    u_i \cos\Big(\tfrac{i\pi
    x}{L}\Big) \Big)
    \Big( \sum_{j=0}^{\infty} v_j \cos\Big(\tfrac{j\pi x}{L}\Big) \Big)
    \Big( \sum_{\ell=0}^{\infty} w_\ell \cos\Big(\tfrac{\ell\pi x}{L}\Big) \Big)
    \cos\Big(\tfrac{k\pi x}{L}\Big)\,dx \\
    = & \frac{L}{8}\Big(
    \sum_{ i+j+\ell =  k } u_i v_j w_\ell
    + \sum_{ i+j-\ell =  k } u_i v_j w_\ell
    + \sum_{ i+j-\ell = -k } u_i v_j w_\ell
    + \sum_{ i-j+\ell =  k } u_i v_j w_\ell  \nonumber \\
    & \quad
    + \sum_{ i-j+\ell = -k } u_i v_j w_\ell
    + \sum_{ i-j-\ell =  k } u_i v_j w_\ell
    + \sum_{ i-j-\ell = -k } u_i v_j w_\ell
    \Big)
    +\frac{L}{8}\mathbf{1}_{\{k=0\}}u_0v_0w_0, \qquad k\ge0.
  \end{align}
  All indices in the sums range over $\mathbb{N}_0$, and a sum whose constraint
  has no solutions is understood to be zero.
\end{lemma}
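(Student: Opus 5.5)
The plan is to reduce every identity to elementary product-to-sum formulas. The $\ell^1$ hypothesis justifies interchanging the finite integral with the (absolutely and uniformly convergent) series. Each identity then follows from orthogonality of cosines on $[0,L]$. Throughout, I will use
\[
  \int_0^L \cos\big(\tfrac{p\pi x}{L}\big)\,dx = L\,\mathbf{1}_{\{p=0\}},\qquad p\in\mathbb{Z}.
\]
The only care needed is to track when a frequency combination vanishes.

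For \eqref{E:cosine-sine-sine}, I write $\sin A\sin B=\tfrac12(\cos(A-B)-\cos(A+B))$ and then $\cos C\cos D=\tfrac12(\cos(C-D)+\cos(C+D))$. The term $u_i$ then contributes $\tfrac{L}{4}\big(\mathbf{1}_{\{i=|n-k|\}}+\mathbf{1}_{\{i+n-k=0\}}-\mathbf{1}_{\{i=n+k\}}-\mathbf{1}_{\{i+n+k=0\}}\big)$, with the appropriate sign conventions. Since $n,k\ge1$, the last indicator never fires. When $n\ne k$, exactly one of the first two fires, at $i=|n-k|$. When $n=k$, both fire at $i=0$, which produces the extra $\delta_{nk}u_0$.

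For \eqref{E:two-cosines-two-sines}, I first expand $\cos(i\theta)\cos(j\theta)=\tfrac12(\cos((i-j)\theta)+\cos((i+j)\theta))$. I then apply the same sine-sine integral as above to a single cosine of integer frequency $p=i\mp j$. For a general integer $p$ this gives
\[
  \tfrac{L}{4}\big(\mathbf{1}_{\{p=n-k\}}+\mathbf{1}_{\{p=-(n-k)\}}-\mathbf{1}_{\{p=n+k\}}-\mathbf{1}_{\{p=-(n+k)\}}\big).
\]
This formula already handles $p=0$ when $n=k$ automatically, because both of the first two indicators fire. Substituting $p=i-j$ and $p=i+j$ and collecting terms gives the eight sums with prefactor $L/8$. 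The sum $\sum_{i+j=-(n+k)}$ is empty since $i,j\ge0$, which is why only seven sums appear.

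For \eqref{E:cos-cos-cos} and \eqref{E:cos-cos-cos-cos}, I expand each product of $r$ cosines ($r=3$, resp. $4$, the last being $\cos(k\theta)$) as
\[
  2^{-(r-1)}\sum_{\pm}\cos\big((i\pm j\pm\cdots\pm k)\theta\big),
\]
and integrate. A single sign combination gives the frequency condition $i\pm j\pm\cdots=\mp k$. The bookkeeping step, which is the main obstacle, is to check that these conditions are listed without double counting. For $k\ge1$, the conditions $p=k$ and $p=-k$ are distinct and each appears once. For $k=0$, they coincide, and also $i+j+\ell=0$ forces all indices to vanish. Reconciling this overlap produces the correction terms $\tfrac{L}{4}\mathbf{1}_{\{k=0\}}u_0v_0$ and $\tfrac{L}{8}\mathbf{1}_{\{k=0\}}u_0v_0w_0$.

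Concretely, I will enumerate the $2^{r-1}$ sign patterns on $j,\ell,k$ relative to $i$. Each pattern gives a condition of the form $i\pm j\pm\ell=\pm k$; conditions that are impossible over $\mathbb{N}_0$ (for example $i+j+\ell=-k$ with $k\ge1$) are dropped. I will then verify that the surviving list matches the stated one. At $k=0$, I will compare the number of times each solution is counted against the required weight to confirm the indicator correction exactly. This is routine but error-prone, so I would check it on small cases (e.g. all indices $0$, and $k=0$ with $i=j$, $\ell=0$).
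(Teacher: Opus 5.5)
Your proposal is correct and is essentially the paper's argument: product-to-sum formulas followed by termwise integration (justified by the $\ell^1$ hypothesis). Your explicit tracking of the coincident frequency conditions at $n=k$ and at $k=0$ supplies exactly the bookkeeping the paper leaves implicit. One small slip: in the first identity the indicator should be $\mathbf{1}_{\{i=n-k\}}$ rather than $\mathbf{1}_{\{i=|n-k|\}}$. As written, it fires together with $\mathbf{1}_{\{i+n-k=0\}}$ when $k>n$ and would double count $u_{k-n}$, although your next sentence shows you intend the correct version.
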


\begin{proof}
  The identities follow from the product-to-sum formulas and termwise
  integration.
\end{proof}

\subsection{Local pitchfork bifurcation and proof of Theorem~\ref{T:local-bifurcation} for the non-minimal model}
\label{SS:local-bifurcation-proof-1}

In this subsection, we prove Theorem~\ref{T:local-bifurcation} for the
non-minimal model.

\begin{proof}[Proof of Theorem~\ref{T:local-bifurcation}  for the non-minimal model]

  Our main idea is to reduce the bifurcation analysis of the PDE
  system~\eqref{E:main-PE} to a scalar ODE by applying center manifold theory.

  For clarity, we divide the proof into five steps.

  \smallskip

  \noindent\textbf{Step 1.~} In this step, we construct a center manifold
  of~\eqref{E:main-PE} at $(u^*,v^*)$ for $\chi_0$ near $\chi^*(u^*)$.

  \smallskip

  First, we make the following change of variables:
  \[
    u(t,x)=\tilde{u}(t,x)+u^*,\quad v(t,x)=\tilde{v}(t,x)+v^*.
  \]
  Then $(\tilde{u},\tilde{v})$ satisfies
  \begin{align}\label{E:local-bifurcation-eq2}
    \tilde{u}_t
    & = \tilde{u}_{xx}
    - \chi_0\Big[\Big( B_0
    + B_u\tilde{u}
    + B_v\tilde{v}\Big. \nonumber
    + B_{uu}\tilde{u}^2
    + B_{uv}\tilde{u}\tilde{v}
    + B_{vv}\tilde{v}^2 \nonumber \\
    & \qquad +  {\mathcal{O}\big(|\tilde{u}|^3+|\tilde{v}|^3  \big) }
    + \mathcal{O}\big(|\tilde{u}|^4+|\tilde{v}|^4\big)\Big)\tilde{v}_x\Big]_x  -
    b\Big( A'\tilde{u}
    + A''\tilde{u}^2  + A'''\tilde{u}^3
    + \mathcal{O}(|\tilde{u}|^4)\Big),
  \end{align}
  and
  \begin{align}\label{E:local-bifurcation-eq3}
    0 = \tilde{v}_{xx} - \mu\tilde{v} + C'\tilde{u} +  C'' \tilde{u}^2
    + C'''\tilde{u}^3
    + \mathcal{O}(|\tilde{u}|^4),
  \end{align}
  where
  \begin{alignat}{3}
    B_0 & \coloneqq \frac{(u^*)^m}{(1+v^*)^\beta}, & \qquad B_u & \coloneqq
    \frac{m(u^*)^{m-1}}{(1+v^*)^\beta}, & \qquad B_v & \coloneqq
    -\frac{\beta(u^*)^m}{(1+v^*)^{\beta+1}}, \label{E:B0-Bu-Bv-eq} \\[6pt]
    B_{uu} & \coloneqq \frac{m(m-1)(u^*)^{m-2}}{2(1+v^*)^\beta}, & \qquad
    B_{uv} & \coloneqq -\frac{m\beta(u^*)^{m-1}}{(1+v^*)^{\beta+1}}, & \qquad
    B_{vv} & \coloneqq \frac{\beta(\beta+1)(u^*)^m}{2(1+v^*)^{\beta+2}},
    \label{E:Buu-Buv-Bvv-eq}
  \end{alignat}
  \begin{align}\label{E:L1-L2-eq}
    A' & \coloneqq \alpha(u^*)^\alpha,
    & A''  & \coloneqq \tfrac{1}{2}(1+\alpha)\alpha(u^*)^{\alpha-1},
    & A''' & \coloneqq
    \tfrac{1}{6}(1+\alpha)\alpha(\alpha-1)(u^*)^{\alpha-2},
  \end{align}
  and
  \begin{equation}\label{E:C-const}
    C' \coloneqq \nu\gamma(u^*)^{\gamma-1}, \qquad
    C'' \coloneqq \frac{\nu\gamma(\gamma-1)(u^*)^{\gamma-2}}{2}, \qquad
    C''' \coloneqq \frac{\nu\gamma(\gamma-1)(\gamma-2)(u^*)^{\gamma-3}}{6}.
  \end{equation}
  % \textcolor{magenta}{
  %   In both proofs below, undifferentiated Taylor remainders $\mathcal{O}(|\tilde{u}|^4)$, $\mathcal{O}(|\tilde{v}|^4)$ and $\mathcal{O}(|\tilde{u}|^j+|\tilde{v}|^j)$, $j=3,4$, satisfy the indicated pointwise bounds before multiplication by $\tilde{v}_x$ and flux differentiation.
  %   In~\eqref{E:u_n-eq}, however, $\mathcal{O}(|\tilde{u}|^4)$ denotes $R_4=R_4^{(0)}+\partial_xR_4^{(1)}$, where the source and flux remainders satisfy
  %   \[
  %     |R_4^{(0)}|\le C|\tilde{u}|^4,\qquad
  %     |R_4^{(1)}|\le C\left(|\tilde{u}|+|\tilde{v}|\right)^3|\tilde{v}_x|.
  %   \]
  %   Thus each is bounded by $C\left(|\tilde{u}|+|\tilde{v}|+|\tilde{v}_x|\right)^4$, and $R_4^{(1)}=0$ at $x=0,L$ since $\tilde{v}_x=0$ there.
  %   Smallness is measured in $X_\theta$ defined below: $X_\theta\hookrightarrow H^{2\theta}(0,L)\hookrightarrow C^1([0,L])$ for $3/4<\theta<1$, and~\eqref{E:K-operator-eq} gives $\|\tilde{v}\|_{C^1}\le C\|\tilde{u}\|_{L^\infty}\le C\|\tilde{u}\|_{X_\theta}$, uniformly for $\chi_0$ near $\chi^*$.
  % }
  Here and throughout this proof, $\mathcal{O}(|\tilde{u}|^4)$ denotes a
  function-valued remainder $R_4(\tilde{u})(t,x)$ satisfying
  $|R_4(\tilde{u})(t,x)| \le C |\tilde{u}(t,x)|^4$ for all $x \in (0, L)$
  whenever $\|{\tilde{u}(t,\cdot)}\|_{L^\infty(0,L)}$ is sufficiently small; the
  same convention applies to $\mathcal{O}(|\tilde{v}|^4)$, to mixed remainders
  such as $\mathcal{O}(|\tilde{u}|^4 + |\tilde{v}|^4)$, and
  $\mathcal{O}(|\tilde{u}|^3 + |\tilde{v}|^3)$.

  Next, let
  \[
    X=\Big\{w\in H^2(0,L):w_x(0)=w_x(L)=0\Big\},
  \]
  and choose $\theta \in (3/4, 1)$.  Let  $A = I - \partial_{xx}$ on $L^2(0,L)$,
  with domain $X$, and define the fractional power space $X_\theta =
  D(A^\theta)$ with its graph norm. Then $X_\theta$ is embedded into $C^1([0,
  L])$ (see~\cite[Theorem 1.6.1]{henry:81:geometric}). Note that
  \[
    \tilde{v}=\mathcal{K}(\tilde{u}),
  \]
  where $\mathcal{K}$ is defined by
  \begin{equation}\label{E:K-operator-eq}
    \Big(\mu-\partial_{xx}\Big)\mathcal{K}(w)
    =\nu\Big[\Big(u^*+w\Big)^\gamma-(u^*)^\gamma\Big],
    \qquad
    \partial_x\mathcal{K}(w)=0\quad\text{at }x=0,L.
  \end{equation}
  After substituting $\widetilde{v} = \mathcal{K}(\tilde{u})$
  into~\eqref{E:local-bifurcation-eq2}, we get
  \begin{align}\label{E:local-bifurcation-eq2-new}
    \tilde{u}_t
    & = \tilde{u}_{xx}
    - \chi_0\Big[\Big(  B_0+B_u\tilde{u}
    + B_v\mathcal{K}(\tilde{u})\Big. \nonumber
    + B_{uu}\tilde{u}^2
    + B_{uv}\tilde{u}\mathcal{K}(\tilde{u})
    + B_{vv}\big(\mathcal{K}(\tilde{u})\big)^2 \nonumber \\
    & \qquad {+ \mathcal{O}\big(|\tilde{u}|^3+|\mathcal{K}(\tilde{u})|^3\big) }
    +
    \mathcal{O}\big(|\tilde{u}|^4+|\mathcal{K}(\tilde{u})|^4\big)\Big)\Big(\mathcal{K}(\tilde{u})\Big)_x\Big]_x
    \nonumber \\
    & \qquad - b\Big( A'\tilde{u}
    + A'' \tilde{u}^2 + A'''\tilde{u}^3
    + \mathcal{O}(|\tilde{u}|^4)\Big).
  \end{align}

  Now, we decompose $X_\theta$ as follows:
  \begin{align*}
    X_\theta=\tilde{X}_\theta\oplus {\rm span}\big(\phi_{n_0}\big),
  \end{align*}
  that is, for any $u \in X_\theta$, there are unique $\tilde{u} \in \tilde{X}_\theta$ and $u_{n_0} \in \mathbb{R}$ such that
  \begin{align*}
    u=\tilde{u}+u_{n_0}\phi_{n_0},
  \end{align*}
  where $\tilde{X}_\theta \subset X_\theta$, and for any $\tilde{u} \in \tilde{X}_\theta$, $\tilde{u}$ can be expanded as
  \begin{align*}
    \tilde{u}=\sum_{k\not= n_0} u_k\cos(\tfrac{k\pi x}{L}),\,\,
    u_k\in\mathbb{R}.
  \end{align*}
  Let
  \begin{equation}\label{E:tilde-chi}
    \tilde{\chi} \coloneqq \chi_0 - \chi^*(u^*),
  \end{equation}
  and consider~\eqref{E:local-bifurcation-eq2-new} adjoined with the trivial
  equation
  \begin{equation}\label{E:chi-ode}
    \frac{d \tilde{\chi}}{dt}=0.
  \end{equation}
  By applying center manifold theory (e.g. \cite[Theorems~6.1.7
  and~6.2.1]{henry:81:geometric}), there is a $C^4$ map $\tilde{h}:
  \operatorname{span}\{\phi_{n_0}\}\times\mathbb{R} \to \tilde{X}_\theta$ such
  that
  \begin{align*}
    \tilde{h}(0,0)=0, \quad D\tilde{h}(0,0)=0, \quad { \tilde{h}(0,\tilde{\chi})=0\quad \forall\, |\tilde{\chi}|\ll 1,}
  \end{align*}
  and
  \begin{align}\label{E:center-manifold-eq}
    \mathcal{M}=\{\big(\tilde{h}(u_{n_0},\tilde{\chi})+u_{n_0}\phi_{n_0},\tilde{\chi}\big)\,|\, |u_{n_0}|\ll
    1,\,\, |\tilde{\chi}|\ll 1\}
  \end{align}
  is locally invariant
  under~\eqref{E:local-bifurcation-eq2-new}+\eqref{E:chi-ode}, and local
  bifurcating solutions
  of~\eqref{E:local-bifurcation-eq2-new}+\eqref{E:chi-ode} lie in this
  invariant manifold.

  \smallskip

  \noindent\textbf{Step 2.~} For $(\tilde{u}_0,\tilde{\chi}) \in \mathcal{M}$, let
  $(\tilde{u}(t, x;\tilde{u}_0, \tilde{\chi}),\tilde{\chi})$ be the solution
  of~\eqref{E:local-bifurcation-eq2-new}+\eqref{E:chi-ode} with $\tilde{u}(0,
  x;\tilde{u}_0, \tilde{\chi}) = \tilde{u}_0(x)$. We may put $\tilde{u}(t, x) =
  \tilde{u}(t, x;\tilde{u}_0, \tilde{\chi})$ if no confusion occurs. Let $\tilde{v}(t, \cdot) = \mathcal{K}(\tilde{u}(t, \cdot))$. Let
  \begin{equation}\label{E:local-bifurcation-eq4}
    \tilde{u}(t,x) = \sum_{n=0}^\infty u_n(t)\cos\Big(\tfrac{n\pi x}{L}\Big) \quad \text{and} \quad
    \tilde{v}(t,x) = \sum_{n=0}^\infty v_n(t)\cos\Big(\tfrac{n\pi x}{L}\Big).
  \end{equation}
  In this step, we derive the expressions for $v_k(t)$ in terms of $u_n$.

  \smallskip

  Note that
  \[
    \mathcal{M}\subset X\times \RR.
  \]
  Hence, $\{u_n(t)\},\{v_n(t)\},\{\tfrac{n^2\pi^2}{L^2}u_n(t)\},
  \{\tfrac{n^2\pi^2}{L^2}v_n(t)\} \in \ell^2$. Consequently,
  $\{u_n(t)\},\{v_n(t)\} \in \ell^1$, and
  \begin{align*}
    \tilde{u}_x(t,x) =-\sum_{n=0}^\infty \tfrac{n \pi }{L}
    u_n(t)\sin(\tfrac{n\pi x}{L})\qquad\text{and}\qquad
    \tilde{u}_{xx}(t,x) =-\sum_{n=0}^\infty \tfrac{n^2 \pi^2 }{L^2}
    u_n(t)\cos(\tfrac{n\pi x}{L}),
  \end{align*}
  and similarly for $\tilde{v}_x$ and $\tilde{v}_{xx}$.
  From~\eqref{E:local-bifurcation-eq3}, we have
  \begin{align*}
    0 & = -\sum_{n=0}^\infty \frac{n^2\pi^2}{L^2}v_n(t)\cos(\tfrac{n\pi x}{L})
    - \mu\sum_{n=0}^\infty v_n(t)\cos(\tfrac{n\pi x}{L}) + C'\sum_{n=0}^\infty
    u_n(t)\cos(\tfrac{n\pi x}{L}) \\
    &
    \qquad
    +
    C''
    \Big(\sum_{n=0}^\infty
    u_n(t)\cos(\tfrac{n\pi
    x}{L})\Big)^2
    + C'''\Big(\sum_{n=0}^\infty u_n(t)\cos(\tfrac{n\pi x}{L})\Big)^3 +
    \mathcal{O}\big(|\tilde{u}|^4\big).
  \end{align*}
  For $k = 0$, integrating the above equation over $(0, L)$, we have
  \begin{align}\label{E:v0-eq}
    v_0
    &{ =\frac{C'}{\mu}u_0\! +\!\frac{C''}{\mu} \Big(u_0^2+\frac12\sum_{n=1}^\infty
    u_n^2\Big)
    +\frac{C'''}{\mu} \Big( u_0^3+\frac32u_0\sum_{n=1}^\infty\! u_n^2 +\frac34
    \sum_{i,j\ge1}u_i u_j u_{i+j} \Big)
    +\mathcal{O}_{v,0}\left(\norm{u}_{\ell^1}^4\right),}
  \end{align}
  { where $\mathcal{O}_{v.0}(\|u\|_{l^1}^4)=\int_0^L \mathcal{O}(|u|^4)dx$.}
  For $k \ge 1$, we multiply by $\cos\left(k\pi x/L\right)$ and integrate over
  $(0, L)$. By Lemma~\ref{L:trig-integrals},
  \begin{align}\label{E:vk-eq}
    v_k(t)
    = & \;C_k^{(1)}u_k(t)
    +C_k^{(2)}
    \Big(
    \sum_{ i-j =  k } u_i u_j
    + \sum_{ i-j = -k } u_i u_j
    + \sum_{ i+j =  k } u_i u_j
    \Big) \notag \\
    &
    +C_k^{(3)}
    \Big(
    \sum_{ i+j+\ell =  k } u_i u_j u_\ell
    + \sum_{ i+j-\ell =  k } u_i u_j u_\ell
    + \sum_{ i+j-\ell = -k } u_i u_j u_\ell
    + \sum_{ i-j+\ell =  k } u_i u_j u_\ell\notag \\
    &
    + \sum_{ i-j+\ell = -k } u_i u_j u_\ell
    + \sum_{ i-j-\ell =  k } u_i u_j u_\ell
    + \sum_{ i-j-\ell = -k } u_i u_j u_\ell
    \Big)
    +\mathcal{O}_{v,k}\Big(\|u\|_{\ell^1}^4\Big),
  \end{align}
  where
  \begin{equation}\label{E:Ck-restated}
    C_k^{(1)} \coloneqq \frac{C'}{\mu+\lambda_k}, \qquad
    C_k^{(2)} \coloneqq \frac{C''}{2\bigl(\mu+\lambda_k\bigr)}, \qquad
    C_k^{(3)} \coloneqq \frac{C'''}{4\bigl(\mu+\lambda_k\bigr)},\quad
    k\ge0,
  \end{equation}
  { (and $\mathcal{O}_{v,k}(\|u\|_{l^1}^4)=\int_0^L \mathcal{O}(|u|^4)\cos(\frac{k\pi x}{L})dx$.}
  In the above, we write $u = (u_n)_{n\ge 0}$ and set $\|u\|_{\ell^2} \coloneqq
  \Big(\sum_{n=0}^\infty |u_n|^2\Big)^{1/2}$ and $\|u\|_{\ell^1} \coloneqq
  \sum_{n=0}^\infty |u_n|$. Whenever a scalar remainder in the mode equations is
  written as ${ \mathcal{O}_{k,v}}(\|u\|_{\ell^1}^j)$, it is bounded by $C_k(1 +
  |\chi_0|)\|u\|_{\ell^1(\mathbb{N}_0)}^j$ with a constant $C_k$ that may depend
  on the order $j$ and on the mode index. Note that the remainder sequences are
  summable in the mode index, $\sum_k|\mathcal{O}_k(\cdot)| < \infty$. We
  use a subscript to distinguish termwise remiders, that is, for $k\ge0$ the
  reminders $\mathcal{O}_{u,k}(\cdot)$ (resp. $\mathcal{O}_{v,k}$) corresponds to
  the $k$-th mode from the $u$-equation (resp. from the $v$-equation).

  \smallskip

  \noindent\textbf{Step 3.~} In this step, we derive the system of first order
  ODEs satisfied by $\{u_n(t)\}_{n=0}^\infty$.

  \smallskip

  First, by~\eqref{E:local-bifurcation-eq2}, we have
  \begin{align}\label{E:u_n-eq}
    \Big(\sum_{n=0}^\infty u_n(t)\cos(\tfrac{n\pi x}{L})\Big)_t =I_0 + I_1 + I_2
    + I_3 + I_4 + I_5 + I_6
    { + \mathcal{O}(|\tilde{u}|^4)},
  \end{align}
  where
  \begin{align*}
    I_0
    &\coloneqq -\sum_{n=0}^\infty \lambda_n\,u_n(t)\cos(\tfrac{n\pi x}{L}) - {
    a\alpha  \sum_{n=0}^\infty u_n(t)\cos(\tfrac{n\pi x}{L}) },
  \end{align*}
  \begin{align*}
    I_1
    &\coloneqq \chi_0 \Big[ \Big\{B_0
    + B_u\sum_{n=0}^\infty u_n(t)\cos\big(\tfrac{n\pi x}{L}\big)
    + B_v\sum_{n=0}^\infty v_n(t)\cos\big(\tfrac{n\pi x}{L}\big)
    \Big\} \times
    \sum_{n=0}^\infty \tfrac{n\pi}{L} v_n(t)\sin\big(\tfrac{n\pi x}{L}\big)
    \Big]_x,
  \end{align*}
  \begin{align*}
    I_2
    &\coloneqq
    \chi_0
    \Big[
    B_{uu}\Big(\sum_{n=0}^\infty u_n(t)\cos\big(\tfrac{n\pi x}{L}\big)\Big)^2
    \times
    \sum_{n=0}^\infty \tfrac{n\pi}{L} v_n(t)\sin\big(\tfrac{n\pi x}{L}\big)
    \Big]_x,
  \end{align*}
  \begin{align*}
    I_3
    &\coloneqq
    \chi_0
    \Big[
    B_{uv}\big(\sum_{n=0}^\infty u_n(t)\cos(\tfrac{n\pi x}{L})\big)
    \big(\sum_{n=0}^\infty v_n(t)\cos(\tfrac{n\pi x}{L})\big)
    \times \sum_{n=0}^\infty \tfrac{n\pi}{L} v_n(t)\sin(\tfrac{n\pi x}{L})
    \Big]_x,
  \end{align*}
  \begin{align*}
    I_4
    &\coloneqq
    \chi_0
    \Big[
    B_{vv}\big(\sum_{n=0}^\infty v_n(t)\cos(\tfrac{n\pi x}{L})\big)^2
    \times
    \sum_{n=0}^\infty \tfrac{n\pi}{L} v_n(t)\sin(\tfrac{n\pi x}{L})
    \Big]_x,
  \end{align*}
  \begin{align*}
    I_5
    \coloneqq
    - bA''
    \Big(\sum_{n=0}^\infty u_n(t)\cos(\tfrac{n\pi x}{L})\Big)^2,
    \quad
    I_6\coloneqq
    - b A'''
    \Big(\sum_{n=0}^\infty u_n(t)\cos(\tfrac{n\pi x}{L})\Big)^3.
  \end{align*}

  Next, we project~\eqref{E:u_n-eq} onto the $k$th mode to get the ODE
  satisfied by $u_k(t)$. The regularity supplied by the parabolic equation gives
  for $k = 0, 1, 2, \cdots$,
  \begin{gather*}
    \int_0^{{L}}   \Big(\sum_{n=0}^\infty u_n(t)\cos(\tfrac{n\pi x}{L})\Big)_t   \cos\big(\tfrac{k\pi x}{L}\big)dx= \int_0^L u_k'(t)
    \cos^2\big(\tfrac{k\pi x}{L}\big)dx =
    \begin{cases}
      L u_k'(t)            & {\rm if}\quad k=0, \\
      \tfrac{L}{2} u_k'(t) & {\rm if}\quad k\ge 1.
    \end{cases}
  \end{gather*}
  % \textcolor{magenta}{
  %   Projecting $R_4=\mathcal{O}(t,x)$ against $\cos(k\pi x/L)$ and integrating its flux part by parts, with no boundary term by the Neumann conditions, gives $\left|\int_0^L R_4\cos(k\pi x/L)\,dx\right|\le C_k\|u\|_{\ell^1}^4\le C_k'\|\tilde{u}\|_{X_\theta}^4$, using $\|\tilde{v}\|_{C^1}\le C\|\tilde{u}\|_{L^\infty}\le C\|u\|_{\ell^1}$ and $X_\theta\hookrightarrow\ell^1$ for cosine coefficients, which justifies the fourth-order remainder in~\eqref{E:ode-eq0} and below.
  % }
  Hence,
  \begin{equation}\label{E:ode-eq0}
    u'_k(t)\int_0^L \cos^2\big(\tfrac{k\pi x}{L}\big)dx=\sum_{\ell=0}^6
    \mathcal{F}_k^{(\ell)} {+\mathcal{O}_{k,u}(\|u\|_{l^1}^4)}\quad {\rm for}\quad k=0,1,2,\cdots,
  \end{equation}
  where
  \begin{equation}\label{E:Fk-projection}
    \mathcal{F}_k^{(\ell)} = \mathcal{F}_k^{(\ell)}(u,v) \coloneqq \int_0^L
    I_{\ell}(t,x) \cos\big(\tfrac{k\pi x}{L}\big) dx, \quad \ell =
    0,1,\ldots,6,\; k = 0,1,2,\dots,
  \end{equation}
  { and  $\int_0^L\mathcal{O}(|\tilde{u}|^4)\cos\left(\tfrac{k\pi
  x}{L}\right)\,dx$.}

  For $k = 0$, Neumann boundary conditions imply $\int_0^L I_\ell(t, x)\,dx =
  0$, $\ell = 1, 2, 3, 4$, and using the cosine orthogonality and
  Lemma~\ref{L:trig-integrals} on the remaining integrals we get
  \begin{align*}
    \sum_{\ell=0}^{6} \mathcal{F}_0^{(\ell)}(u,v)
    =      & {-aL\alpha u_0(t)}
    - b L A''
    \Big(
    u_0(t)^2 + \frac{1}{2}\sum_{n=1}^\infty u_n(t)^2
    \Big) \\
    & - b L A'''
    \Big(
    \frac{1}{4}u_0(t)^3
    + \frac{3}{4}\sum_{i,j\ge0} u_i(t)u_j(t)u_{i+j}(t)
    \Big).
  \end{align*}
  For $k \ge 1$, we have
  \begin{align*}
    \mathcal{F}_k^{(0)}
    =
    \frac{L}{2}\Big(-\lambda_k + a - b(1+\alpha)(u^*)^\alpha\Big)
    u_k(t){=-{\frac{L}{2}}(\lambda_k+a\alpha)u_k(t),}
  \end{align*}

  \begin{align*}
    \mathcal{F}_k^{(1)}
    & ={\frac{\chi_0\pi^2}{4L}}k^2v_k \left(2B_0+B_u u_0+B_v v_0\right) \\
    & \quad + \frac{\chi_0\pi^2}{4L}k \sum_{n=1}^{\infty}n v_n
    \left[ B_u\left(u_{|n-k|}-u_{n+k}\right) + B_v\left(v_{|n-k|}-v_{n+k}\right)
    \right].
  \end{align*}

  \begin{align*}
    \mathcal{F}_k^{(2)}
    & =
    \chi_0\,\frac{\pi^2}{8L}\,k\sum_{n=1}^\infty
    n\,v_n(t)\,B_{uu}
    \Big(
    \sum_{\substack{ i-j =    n-k }} u_i u_j
    + \sum_{\substack{ i-j = -(n-k) }} u_i u_j \\
    & \quad
    + \sum_{\substack{ i+j =    n-k }} u_i u_j
    + \sum_{\substack{ i+j = -(n-k) }} u_i u_j
    - \sum_{\substack{ i-j =    n+k }} u_i u_j
    - \sum_{\substack{ i-j = -(n+k) }} u_i u_j
    - \sum_{\substack{ i+j =    n+k }} u_i u_j
    \Big),
  \end{align*}
  \begin{align*}
    \mathcal{F}_k^{(3)}
    & =
    \chi_0\,\frac{\pi^2}{8L}\,k\sum_{n=1}^\infty
    n\,v_n(t)\,B_{uv}
    \Big(
    \sum_{ i-j =    n-k } u_i v_j
    + \sum_{ i-j = -(n-k) } u_i v_j
    + \sum_{ i+j =    n-k } u_i v_j \\
    &
    \quad
    + \sum_{ i+j = -(n-k) } u_i v_j
    - \sum_{ i-j =    n+k } u_i v_j
    - \sum_{ i-j = -(n+k) } u_i v_j
    - \sum_{ i+j =    n+k } u_i v_j \Big),
  \end{align*}
  \begin{align*}
    \mathcal{F}_k^{(4)}
    = & \chi_0\,\frac{\pi^2}{8L}\,k\sum_{n=1}^\infty n\,v_n(t)\,B_{vv}
    \Big(
    \sum_{ i-j =    n-k } v_i v_j
    + \sum_{ i-j = -(n-k) } v_i v_j \\
    & \quad
    + \sum_{ i+j =    n-k } v_i v_j
    + \sum_{ i+j = -(n-k) } v_i v_j
    - \sum_{ i-j =    n+k } v_i v_j
    - \sum_{ i-j = -(n+k) } v_i v_j
    - \sum_{ i+j =    n+k } v_i v_j
    \Big),
  \end{align*}
  \begin{align*}
    \mathcal{F}_k^{(5)}
    = -bA''\frac{L}{4}
    \Big(
    \sum_{ i-j =  k } u_i u_j
    + \sum_{ i-j = -k } u_i u_j
    + \sum_{ i+j =  k } u_i u_j
    \Big),
  \end{align*}
  and
  \begin{align*}
    \mathcal{F}_k^{(6)}
    = & -bA'''\frac{L}{8}
    \Big(
    \sum_ { i+j+\ell =  k }u_i u_j u_\ell
    + \sum_ { i+j-\ell =  k }u_i u_j u_\ell
    + \sum_ { i+j-\ell = -k }u_i u_j u_\ell \\
    & + \sum_ { i-j+\ell =  k }u_i u_j u_\ell
    + \sum_ { i-j+\ell = -k }u_i u_j u_\ell
    + \sum_ { i-j-\ell =  k }u_i u_j u_\ell
    + \sum_ { i-j-\ell = -k }u_i u_j u_\ell
    \Big).
  \end{align*}

  By~\eqref{E:ode-eq0} and~\eqref{E:Fk-projection}, we then have
  \begin{subequations}\label{E:mode-ode}
    \begin{equation}\label{E:mode-ode-u0}
      \frac{d u_0}{dt}
      = -a\alpha\,u_0
      - bA''
      \Big( u_0^2 + \frac{1}{2}\sum_{i=1}^\infty u_i^2 \Big)
      + \mathcal{O}_{u,0}\big(\|u\|_{\ell^1}^3\big).
    \end{equation}
    For each $k \ge 1$, we have
    \begin{equation}\label{E:mode-ode-uk}
      \begin{aligned}
        \frac{d u_k}{dt}
        = & \sigma_k(\chi_0)\,u_k
        - \frac{bA''}{2}\,\mathcal{S}^{(2)}_k(u)
        + \chi_0\, \mathcal{G}_k(u,v)
        - \frac{bA'''}{4}\,\mathcal{S}^{(3)}_k(u)
        +\mathcal{O}_{u,k}\big(\|u\|_{\ell^1}^4\big),
      \end{aligned}
    \end{equation}
  \end{subequations}
  where
  \begin{equation}\label{E:Sk-2-eq}
    \mathcal{S}^{(2)}_k(u)
    \coloneqq \sum_{\substack{ i-j =  k }} u_i u_j
    + \sum_{\substack{ i-j = -k }} u_i u_j
    + \sum_{\substack{ i+j =  k }} u_i u_j,
  \end{equation}
  \begin{equation}\label{E:Sk-3-eq}
    \begin{aligned}
      \mathcal{S}^{(3)}_k(u)
      \coloneqq &
      \sum_{ i+j+\ell =  k } u_i u_j u_\ell
      + \sum_{ i+j-\ell =  k } u_i u_j u_\ell
      + \sum_{ i+j-\ell = -k } u_i u_j u_\ell \\
      & + \sum_{ i-j+\ell =  k } u_i u_j u_\ell
      + \sum_{ i-j+\ell = -k } u_i u_j u_\ell
      + \sum_{ i-j-\ell =  k } u_i u_j u_\ell
      + \sum_{ i-j-\ell = -k } u_i u_j u_\ell,
    \end{aligned}
  \end{equation}
  and
  \begin{equation}\label{E:Gk-eq}
    \mathcal{G}_k(u,v)
    \coloneqq \frac{2}{L\, \chi_0}\,\sum_{\ell=1}^{4}
    \mathcal{F}_k^{(\ell)}\big(u,v\big)
    -\lambda_kB_0C_k^{(1)}u_k{= {\mathcal{O}_{k,u}}(\|u\|_{l^1}^2)}.
  \end{equation}

  \noindent\textbf{Step 4:~} In this step, we derive the ODE satisfied by
  $u_{n_0}$.
  \smallskip

  First, note that, on the center manifold $\mathcal{M}$
  (see~\eqref{E:center-manifold-eq}), we have
  \[
    \tilde{h}(u_{n_0},\tilde{\chi})=\big(h_0(u_{n_0},\tilde{\chi}), \cdots,
    h_{n_0-1}(u_{n_0},\tilde{\chi}),h_{n_0+1}(u_{n_0},\tilde{\chi}),\cdots),
  \]
  where
  \[
    h_n(u_{n_0},\tilde{\chi}) =
    a_{n,1}\,u_{n_0}^2
    + a_{n,2}\,\tilde{\chi}\,u_{n_0}
    + \mathcal{O}\big(|\tilde{\chi}|^2 |u_{n_0}| +|\tilde{\chi}| u_{n_0}^2 +
    |u_{n_0}|^3\big),
    \quad n \ne n_0,
  \]
  and
  \[
    \sum_{n\not = n_0}|h_n(u_{n_0},\tilde{\chi})|<\infty\quad\forall |u_{n_0}|\ll
    1 \quad {\rm and}\quad |\tilde{\chi}|\ll 1.
  \]
  Hence, the manifold
  \[
    \widetilde{\mathcal{M}}
    = \Big\{ \big(U,\tilde{\chi}\big) =\big( (u_n)_{n\ge0},\tilde{\chi}\big) \big|
    u_{n_0} = o(1),\; \tilde{\chi} = o(1),\;u_n =
    h_n\big(u_{n_0},\tilde{\chi}\big) \text{ for } n \ne n_0 \Big\}
  \]
  is locally invariant
  under~\eqref{E:mode-ode-u0}+\eqref{E:mode-ode-uk}+\eqref{E:chi-ode} and
  contains all sufficiently small equilibria. For each fixed $\widetilde{\chi}$,
  the restriction of~\eqref{E:mode-ode-u0}+\eqref{E:mode-ode-uk} to
  $\widetilde{\mathcal{M}}$ is the following scalar ODE:
  \begin{align}\label{E:center-ODE-initial}
    \frac{d u_{n_0}}{dt}
    & = \big(\partial_{\chi_0}\sigma_{n_0}(\chi^*)\big)\tilde{\chi}\,u_{n_0}
    - \frac{bA''}{2}\,\mathcal{S}^{(2)}_{n_0}(u)
    + (\chi^*+\tilde{\chi})\,\mathcal{G}_{n_0}(u,v)
    - \frac{bA'''}{4}\,\mathcal{S}^{(3)}_{n_0}(u)
    + \mathcal{O}\big(\|u\|_{\ell^1}^4\big),
  \end{align}
  where $u = \{u_n\}$ with $u_n = h_n(u_{n_0}, \tilde{\chi})$ for $n\not = n_0$.

  \smallskip

  In the following, we find the coefficients of the leading terms
  in~\eqref{E:center-ODE-initial}. Since $h_k(0, \tilde{\chi}) = 0$, the
  coefficient of $\tilde{\chi}^2$ is zero. Hence, $u_k = h_k(u_{n_0}, \tilde{\chi})
  = a_{k,1}u_{n_0}^2 + a_{k,2}\tilde{\chi}u_{n_0} +
  \mathcal{O}\Big(|\tilde{\chi}|^3 + |u_{n_0}|^3\Big)$ for $k \ge 1$, $k \ne
  n_0$. We then obtain from~\eqref{E:Sk-2-eq}, \eqref{E:Sk-3-eq},
  and~\eqref{E:Gk-eq}
  \begin{equation}\label{E:S2-k-eq-new}
    \mathcal{S}^{(2)}_k(u)
    = \begin{cases}
      u_{n_0}^2 + \mathcal{O}\Big(|\tilde{\chi}|\,|u_{n_0}|^2 + |u_{n_0}|^3\Big),
      & k = 2 n_0, \\
      \mathcal{O}\Big(|\tilde{\chi}|\,|u_{n_0}|^2 + |u_{n_0}|^3\Big),
      & k \ne 2 n_0,
    \end{cases}
  \end{equation}
  \begin{equation}\label{E:S3-k-eq-new}
    \mathcal{S}^{(3)}_k(u)=\mathcal{O} \Big(|\tilde{\chi}|^2
    |u_{n_0}|+|\tilde{\chi}| u_{n_0}^2 + |u_{n_0}|^3\Big),
  \end{equation}
  and
  \begin{equation}\label{E:Gk-eq-new}
    \mathcal{G}_k(u,v)
    = \Gamma_k\, u_{n_0}^2 + \mathcal{O}\Big(|\tilde{\chi}|  u_{n_0}^2 +
    |u_{n_0}|^3\Big),
  \end{equation}
  where $\Gamma_k = 0$ for $k \ne 2 n_0$, and
  \begin{equation}\label{E:Gamma2n0-explicit}
    \Gamma_{2n_0}
    = \Big(\frac{n_0\pi}{L}\Big)^2
    \Big[
    4B_0\,C_{2n_0}^{(2)}
    + B_u\,C_{n_0}^{(1)}
    + B_v\,\bigl(C_{n_0}^{(1)}\bigr)^2
    \Big].
  \end{equation}
  Here $B_0$, $B_u$, and $B_v$ are defined in~\eqref{E:B0-Bu-Bv-eq}, and
  $C_{n_0}^{(1)}$ and $C_{2n_0}^{(2)}$ are defined in~\eqref{E:Ck-restated}.

  We compute the coefficients of the leading terms in $h_k$. For $k = 0$,
  differentiating $u_0 = h_0(u_{n_0},\tilde{\chi})$ with respect to time and
  using~\eqref{E:mode-ode-uk} with $k = n_0$,
  by~\eqref{E:S2-k-eq-new}, \eqref{E:S3-k-eq-new}, and~\eqref{E:Gk-eq-new}, we
  have that, for $|\tilde{\chi}| + |u_{n_0}|$ sufficiently small,
  \begin{align}\label{E:du0-center}
    \frac{d u_0}{dt}
    = & \Big(2 a_{0,1} u_{n_0} + a_{0,2} \tilde{\chi}\Big)\frac{d u_{n_0}}{dt}
    + \mathcal{O}\Big(|\tilde{\chi}|^3 + |u_{n_0}|^3\Big) \notag =
    \mathcal{O}\Big(|\tilde{\chi}|^3 + |u_{n_0}|^3\Big).
  \end{align}
  On the other hand, substituting $u_n = h_n(u_{n_0}, \tilde{\chi})$ on the
  right-hand side of~\eqref{E:mode-ode-u0} gives
  \begin{align*}
    \frac{d u_0}{dt}
    = & -a\alpha\,\Big(a_{0,1} u_{n_0}^2 + a_{0,2} \tilde{\chi} u_{n_0}\Big)
    - \frac{bA''}{2} u_{n_0}^2
    + \mathcal{O}\Big(|\tilde{\chi}|^3 + |u_{n_0}|^3\Big).
  \end{align*}
  By comparing the above two ODEs, we see that
  \begin{equation}\label{E:a_0,i}
    {a_{0,1} = -\frac{bA''}{2a\alpha}=-\frac{1+\alpha}{4u^*}},
    \qquad
    a_{0,2}  = 0.
  \end{equation}

  We can carry out the same procedure for $u_k$ with $k \ge 1$ and $k \ne n_0$.
  Differentiating $u_k = h_k(u_{n_0}, \tilde{\chi})$ and
  using~\eqref{E:mode-ode-uk} yields
  \begin{align}\label{E:duk-center}
    \frac{d u_k}{dt}
    = \Big(2 a_{k,1} u_{n_0} + a_{k,2} \tilde{\chi}\Big)\frac{d u_{n_0}}{dt}
    + \mathcal{O}\Big(|\tilde{\chi}|^3 +
    |u_{n_0}|^3\Big)=\mathcal{O}\Big(|\tilde{\chi}|^3 + |u_{n_0}|^3\Big).
  \end{align}
  Substitute $u_n = h_n(u_{n_0}, \tilde{\chi})$ into~\eqref{E:mode-ode-uk}. Using
  $u_k = a_{k,1}u_{n_0}^2 + a_{k,2}\tilde{\chi}u_{n_0} +
  \mathcal{O}(|\tilde{\chi}|^3 + |u_{n_0}|^3)$, we obtain
  \begin{align}\label{E:duk-expanded}
    \frac{d u_k}{dt}
    = & \sigma_k(\chi^*)\Big(a_{k,1} u_{n_0}^2 + a_{k,2} \tilde{\chi} u_{n_0}
    \Big)
    - \frac{bA''}{2}\,\mathcal{S}^{(2)}_k(u) + \chi^*\,\mathcal{G}_k(u,v)
    + \mathcal{O}\Big(|\tilde{\chi}|^3 + |u_{n_0}|^3\Big).
  \end{align}
  By~\eqref{E:S2-k-eq-new}, \eqref{E:S3-k-eq-new}, and~\eqref{E:Gk-eq-new}
  again, we have
  \begin{subequations}\label{E:a_k,i}
    \begin{equation}\label{E:a_k,i-zero}
      a_{k,2} = 0 \quad \text{for all } k \ne n_0 \text{ with } k \ge 1,
    \end{equation}
    and
    \begin{equation}\label{E:a_k,i>0}
      a_{k,1}
      = \begin{cases}
        \displaystyle
        \frac{\frac{bA''}{2} - \chi^*\Gamma_{2n_0}}
        {\sigma_{2n_0}(\chi^*)},
        & k = 2 n_0, \\
        0, & k \ne n_0,\, 2 n_0 \text{ with } k \ge 1.
      \end{cases}
    \end{equation}
  \end{subequations}

  By~\eqref{E:a_0,i} and~\eqref{E:a_k,i-zero}, $a_{k,2} = 0$ for every $k \ne
  n_0$; hence smoothness of the center-manifold map and $h_k(0, \widetilde{\chi})
  = 0$ give
  \begin{align}\label{E:hk-expansion}
    h_k(u_{n_0},\widetilde{\chi})
    &=a_{k,1}u_{n_0}^2
    +\mathcal{O}\left(
    |\widetilde{\chi}|^2|u_{n_0}|
    +|\widetilde{\chi}||u_{n_0}|^2
    +|u_{n_0}|^3
    \right).
  \end{align}
  We then have
  \begin{equation}\label{E:S2-center}
    \mathcal{S}^{(2)}_{n_0}(u)
    = \bigl(4a_{0,1} + 2a_{2n_0,1}\bigr)\,u_{n_0}^3
    + \mathcal{O}\Big({|u_{n_0}|^{{2}} |\tilde{\chi}|^2+ {|u_{n_0}|^3}
    |\tilde{\chi}|+ |u_{n_0}|^4}\Big),
  \end{equation}
  \begin{equation}\label{E:S3-center}
    \mathcal{S}^{(3)}_{n_0}(u)
    = 3\,u_{n_0}^3 + \mathcal{O}\Big({ |u_{n_0}|^{{2}} |\tilde{\chi}|^2+
    {|u_{n_0}|^3} |\tilde{\chi}|+ |u_{n_0}|^4}\Big),
  \end{equation}
  and
  \begin{equation}\label{E:Gn0-cubic}
    \mathcal{G}_{n_0}(u,v)
    = \Gamma_{n_0}^{(3)}\,u_{n_0}^3
    + \mathcal{O}\Big({|u_{n_0}|^{{2}} |\tilde{\chi}|^2+ {|u_{n_0}|^3}
    |\tilde{\chi}|+ |u_{n_0}|^4}\Big),
  \end{equation}
  where the $\Gamma_{n_0}^{(3)}$ coefficient is
  \begin{equation}\label{E:Gamma-n0-explicit}
    \Gamma_{n_0}^{(3)} =\Big(\frac{n_0\pi}{L}\Big)^2\Big[
    B_0H+\Big(B_u+B_vF\Big)G +F\Big(R-\frac{S}{2}\Big) \Big],
  \end{equation}
  $B_0$, $B_u$, $B_v$ are defined in~\eqref{E:B0-Bu-Bv-eq},
  \begin{alignat}{2}\label{E:E-F-G-H}
    {  F\coloneqq C_{n_0}^{(1)}, } \quad
    G  \coloneqq C_{2n_0}^{(1)}a_{2n_0,1}+C_{2n_0}^{(2)},   \quad
    & H \coloneqq 2C_{n_0}^{(2)}\Big(2a_{0,1}+a_{2n_0,1}\Big) +3C_{n_0}^{(3)},
  \end{alignat}
  and
  \begin{align*}
    R \coloneqq B_ua_{0,1}+B_v \big(
    C_0^{(1)}a_{0,1}+C_0^{(2)}\big)+\frac{Q}{2}, \,\,
    S \coloneqq B_u a_{2n_0,1}+B_vG+\frac{Q}{2},
    \,\,
    Q \coloneqq B_{uu}+B_{uv}F+B_{vv}F^2.
  \end{align*}

  Substituting the expansion~\eqref{E:hk-expansion} into the quadratic and cubic
  terms of~\eqref{E:center-ODE-initial}, using~\eqref{E:S2-center},
  \eqref{E:S3-center}, \eqref{E:Gn0-cubic}, and the fact that
  $h_{n}(0,\tilde{\chi}) = 0$ for $|\tilde{\chi}|\ll 1$, we must have
  \begin{equation}\label{E:center-ODE-final}
    \frac{d u_{n_0}}{dt}
    = \alpha_{n_0}\,\tilde{\chi}\,u_{n_0}
    - \beta_{n_0}\,u_{n_0}^3
    + { \mathcal{O}\Big( |u_{n_0}|^{{2}} |\tilde{\chi}|^2+ {|u_{n_0}|^3}
    |\tilde{\chi}|+ |u_{n_0}|^4\Big),}
  \end{equation}
  where
  \begin{align}\label{E:alpha_n0}
    \alpha_{n_0}  & = \partial_{\chi_0}\sigma_{n_0}(\chi^*)
    = \nu\gamma\,\frac{(u^*)^{m+\gamma-1}}{(1+v^*)^\beta}\,
    \frac{\lambda_{n_0}}{\mu+\lambda_{n_0}}
    > 0, \\
    \beta_{n_0}
    & = \frac{(1+\alpha)\alpha\,b\,(u^*)^{\alpha-1}}{4}
    \bigl(4a_{0,1} + 2a_{2n_0,1}\bigr)
    - \chi^*\,\Gamma_{n_0}^{(3)}
    + \frac{(1+\alpha)\alpha(\alpha-1)b\,(u^*)^{\alpha-2}}{8}.
    \label{E:beta-n0}
  \end{align}

  \noindent\textbf{Step 5:~} In this step, we conclude the bifurcation analysis.

  \smallskip

  By~\eqref{E:center-ODE-final}, when $\beta_{n_0} \ne
  0$ and $|\tilde\chi|, |u_{n_0}|\ll 1$, \eqref{E:center-ODE-final} has two branches $u_{n_0}^\pm(\tilde{\chi})$ of
  equilibrium solutions on the side $\beta_{n_0}\widetilde{\chi} > 0$, with
  \begin{align}\label{E:two-branches}
    u_{n_0}^\pm(\widetilde{\chi})
    =\pm\sqrt{\frac{\alpha_{n_0}}{\beta_{n_0}}
    \widetilde{\chi}}+\mathcal{O}(|\widetilde{\chi}|),
  \end{align}
  which gives two branches $u^\pm$ of equilibrium solutions of~\eqref{E:main-PE}
  on the side $\beta_{n_0} (\chi_0 - \chi^*) > 0$ given in
  Theorem~\ref{T:local-bifurcation}. Moreover,
  when $\beta_{n_0} > 0$, the bifurcation occurs when $\chi_0 > \chi^*$ and the
  bifurcation solutions $u^\pm$ are linearly asymptotically stable, which is
  referred to as super-critical pitchfork bifurcation, and when $\beta_{n_0} <
  0$, the bifurcation occurs when $\chi_0 < \chi^*$ and solutions $u^\pm$ are
  linearly unstable, which is referred to as sub-critical pitchfork bifurcation.
\end{proof}

\subsection{Local pitchfork bifurcation and proof of Theorem~\ref{T:local-bifurcation} for the minimal model}
\label{SS:local-bifurcation-proof-2}

In this subsection, we prove Theorem~\ref{T:local-bifurcation} for the minimal
model.

\begin{proof}[Proof of Theorem~\ref{T:local-bifurcation} for the minimal model]
  Throughout this proof, assume $a = b = 0$ and fix the positive constant
  equilibrium $(u^*,v^*)$, where $v^* = \frac{\nu}{\mu}(u^*)^\gamma$. As in the
  non-minimal case, set
  \[
    u(t,x)=u^*+\widetilde{u}(t,x),\qquad
    v(t,x)=v^*+\widetilde{v}(t,x).
  \]
  Then $(\widetilde{u},\widetilde{v})$
  satisfies~\eqref{E:local-bifurcation-eq2}--\eqref{E:local-bifurcation-eq3}
  with $a = b = 0$, and $\widetilde{v} = \mathcal{K}(\widetilde{u})$, where
  $\mathcal{K}$ is defined by~\eqref{E:K-operator-eq}. Consequently, $\widetilde{u}$ satisfies~\eqref{E:local-bifurcation-eq2-new} with $b = 0$.

  The proof follows the same arguments as in the non-minimal model. We give the
  main differences. We divide the proof into four steps.

  \smallskip

  \noindent\textbf{Step $1'$:~} We identify the invariant fixed-mass phase space
  and construct the center manifold.

  \smallskip

  Recall the fractional power space $X_\theta$ introduced earlier in
  Section~\ref{SS:local-bifurcation-proof-1}, where $3/4 < \theta < 1$. Define
  \[
    \hat{X}_\theta
    \coloneqq
    \big\{w\in X_\theta:\int_0^L w(x)\,dx=0\big\}.
  \]
  Then
  \[
    X_\theta=\hat{X}_\theta\oplus\operatorname{span}\{1\}.
  \]
  It is easy to see that $\hat{X}_\theta$ is invariant
  under~\eqref{E:local-bifurcation-eq2}--\eqref{E:local-bifurcation-eq3} with $a
  = b = 0$.

  Set
  \[
    \hat{X}_\theta
    =\operatorname{span}\{\phi_{n_0}\}\oplus\hat{X}_\theta^s.
  \]
  Since $n_0$ is the unique bifurcation mode, the linearization at $(\widetilde{u},\chi_0) = (0,\chi^*)$, restricted to $\hat{X}_\theta$, has the simple
  eigenvalue $\sigma_{n_0}(\chi^*) = 0$, while $\sigma_k(\chi^*) < 0$ for every
  $k
  \ge 1$, $k \ne n_0$.

  As in Step 1 of the proof for the non-minimal model, there is a locally
  invariant manifold
  \[
    \mathcal{M}_0
    =\left\{
    \left(\hat{h}(u_{n_0},\widetilde{\chi})+u_{n_0}\phi_{n_0},\widetilde{\chi}\right):
    |u_{n_0}|\ll1,\ |\widetilde{\chi}|\ll1
    \right\}
  \]
  of~\eqref{E:local-bifurcation-eq2-new} with $b = 0$, where $\hat{h}:\operatorname{span}\{\phi_{n_0}\}\times\mathbb{R} \longrightarrow \hat{X}_\theta^s$ is a $C^4$ map satisfying $\hat{h}(0, 0) = 0$, $D\hat{h}(0, 0) = 0$,
  $\hat{h}(0, \widetilde{\chi}) = 0$ for all $|\widetilde{\chi}|\ll1$.

  \smallskip

  \noindent\textbf{Step $2'$:~} We derive the Fourier-mode equations on the
  fixed-mass space.

  \smallskip

  For a solution on $\mathcal{M}_0$, write
  \[
    \widetilde{u}(t,x)
    =\sum_{k=1}^\infty u_k(t)\cos\Big(\tfrac{k\pi x}{L}\Big),\qquad
    \widetilde{v}(t,x)
    =\sum_{k=0}^\infty v_k(t)\cos\Big(\tfrac{k\pi x}{L}\Big).
  \]
  The sum for $\widetilde{u}$ begins at $k = 1$ because mass conservation gives
  $u_0(t) = 0$. In contrast, $v_0$ need not vanish when $\gamma \ne 1$. Setting
  $u_0 = 0$ in~\eqref{E:v0-eq} gives
  \begin{align*}
    v_0
    ={}&C_0^{(2)}\sum_{j=1}^\infty u_j^2
    +\frac{\nu\gamma(\gamma-1)(\gamma-2)(u^*)^{\gamma-3}}{8\mu}
    \sum_{i,j\ge1}u_i u_j u_{i+j}
    +\mathcal{O}_0\left(\|u\|_{\ell^1}^4\right).
  \end{align*}
  For $k \ge 1$, the coefficients $v_k$ are given by~\eqref{E:vk-eq} with $u_0 =
  0$.

  Projecting the $u$-equation onto the nonconstant cosine modes and setting $b =
  0$ in~\eqref{E:mode-ode-uk} yields
  \[
    \frac{du_k}{dt}
    =\sigma_k(\chi_0)u_k
    +\chi_0\mathcal{G}_k(u,v)
    +\mathcal{O}_k\left(\|u\|_{\ell^1}^4\right),\qquad k\ge1.
  \]
  Therefore, near $\chi_0 = \chi^*$,
  \begin{equation}\label{E:ode-chi*-minimal}
    \begin{cases}
      \displaystyle
      \frac{du_{n_0}}{dt}
      =\left(\partial_{\chi_0}\sigma_{n_0}(\chi^*)\right)
      \widetilde{\chi} u_{n_0}
      +(\chi^*+\widetilde{\chi})\mathcal{G}_{n_0}(u,v)
      +\mathcal{O}_{n_0}\left(\|u\|_{\ell^1}^4\right),
      \\[3mm]
      \displaystyle
      \frac{du_k}{dt}
      =\left(\sigma_k(\chi^*)
      +(\partial_{\chi_0}\sigma_k)(\chi^*)\widetilde{\chi}\right)u_k
      +(\chi^*+\widetilde{\chi})\mathcal{G}_k(u,v)
      +\mathcal{O}_k\left(\|u\|_{\ell^1}^4\right),
      \quad k\ge1,\ k\ne n_0,
      \\[3mm]
      \displaystyle
      \frac{d\widetilde{\chi}}{dt}=0.
    \end{cases}
  \end{equation}

  \smallskip

  \noindent\textbf{Step $3'$:~} We compute the reduced equation on
  $\mathcal{M}_0$ through cubic order.

  \smallskip

  By the same arguments as in the non-minimal case, we get the scalar reduced
  equation
  \begin{equation}\label{E:center-ODE-initial-minimal}
    \frac{du_{n_0}}{dt}
    =\alpha_{n_0}\widetilde{\chi} u_{n_0}
    -\beta_{n_0}u_{n_0}^3
    +\mathcal{O}\left(
    |u_{n_0}|^2|\widetilde{\chi}|^2+|u_{n_0}|^3|\widetilde{\chi}|+|u_{n_0}|^4
    \right),
  \end{equation}
  with
  \begin{equation}\label{E:beta-n0-minimal}
    \beta_{n_0}=-\chi^*\Gamma_{n_0}^{(3)},
  \end{equation}
  and $\alpha_{n_0}$ as in~\eqref{E:alpha-n0}, where we set $a = b = 0$, and
  $a_{0,1} = 0$ due to the fixed-mass constraint.

  \smallskip

  \noindent\textbf{Step $4'$:~} We use the same argument as in Step 5 for the
  non-minimal model, now in the fixed-mass space.
  Theorem~\ref{T:local-bifurcation} in the minimal case.
\end{proof}

\section{Effects of $\beta$ and $L$ on the bifurcation and proof of Theorem~\ref{T:direction-change}}
\label{S:direction-change}

In this section, we study the effects of the nonlinear sensitivity exponent
$\beta$ and the domain size $L$ on the bifurcation of positive constant
solutions of~\eqref{E:main-PE} and prove Theorem~\ref{T:direction-change}.

\subsection{Effects of $\beta$  and proof of Theorem~\ref{T:direction-change}(1)}
\label{SS:beta-analysis}

In this subsection, we explore the changes of bifurcation direction as $\beta$
changes, but all the other parameters are fixed, and prove
Theorem~\ref{T:direction-change}(1).

\begin{proof} [Proof of Theorem~\ref{T:direction-change}(1)]
  We first prove \eqref{E:beta-n0-eq}. Recall
  \[
    \beta_{n_0}
    = \frac{(1+\alpha)\alpha\,b\,(u^*)^{\alpha-1}}{4}
    \bigl(4a_{0,1} + 2a_{2n_0,1}\bigr)
    - \chi^*\,\Gamma_{n_0}^{(3)}
    + \frac{(1+\alpha)\alpha(\alpha-1)b\,(u^*)^{\alpha-2}}{8}
  \]
  (see~\eqref{E:beta-n0}). Throughout this proof, we assume that all the
  parameters are fixed, except $\beta$. We first find the dependence of
  $a_{2n_0,1}$ and $\chi^*\Gamma_{n_0}^{(3)}$ on $\beta$.

  First, for simplicity, define
  \[
    p(\beta)\coloneqq a_{2n_0,1}(\beta) =
    \frac{\frac{bA''}{2}-\chi^*\Gamma_{2n_0}}
    {\sigma_{2n_0}(\chi^*)},
    \qquad
    q(\beta)\coloneqq
    \chi^*(u^*,\beta)\Gamma_{n_0}^{(3)}(\beta).
  \]
  Using this notation, formula~\eqref{E:beta-n0} becomes
  \begin{equation}\label{E:beta-n0-pq}
    \beta_{n_0}(\beta)
    =
    \frac{bA''}{2}
    \left(
    4a_{0,1}+2p(\beta)
    \right)
    -q(\beta)
    +\frac{3bA'''}{4}.
  \end{equation}
  Set
  \begin{equation}\label{E:Pi-eq}
    \Pi
    \coloneqq
    \frac{\chi^*(u^*,\beta)}{(1+v^*)^\beta}\lambda_{n_0}
    \overset{\text{by~\eqref{E:chi-star}}}{=}
    \frac{(\lambda_{n_0}+a\alpha)(\mu+\lambda_{n_0})}{\nu\gamma\,(u^*)^{m+\gamma-1}}
    .
  \end{equation}
  Hence, $\Pi$ is positive and independent of $\beta$. It follows
  from~\eqref{E:Gamma2n0-explicit} that
  \[
    \chi^*\Gamma_{2n_0}
    =
    \Pi
    \Big(
    4(u^*)^mC_{2n_0}^{(2)}
    +m(u^*)^{m-1}C_{n_0}^{(1)}
    -\beta
    \frac{(u^*)^m\bigl(C_{n_0}^{(1)}\bigr)^2}{1+v^*}
    \Big).
  \]
  Hence, $\chi^*\Gamma_{2n_0}$ is affine in $\beta$. Since
  \[
    \sigma_{2n_0}(\chi^*)=-4\lambda_{n_0}+ \Pi \frac{4\nu \gamma
    (u^*)^{m+\gamma-1}}{\mu+4\lambda_{n_0}}-a\alpha,
  \]
  the function $p$ is also affine in $\beta$. This together
  with~\eqref{E:Gamma-n0-explicit} and~\eqref{E:Pi-eq} shows that $q$ is a
  polynomial of degree at most two.

  Because $p$ is affine and $q$ is quadratic,  the right-hand side
  of~\eqref{E:beta-n0-pq} is a quadratic polynomial in $\beta$. Its Taylor
  expansion at $\beta = 0$ is exact and gives
  \[
    \beta_{n_0}(\beta)=A\beta^2+B\beta+C,
  \]
  where
  \begin{align}\label{E:A-B-C-simple}
    A
    =-\frac12q''(0),
    \quad
    B
    =bA''p'(0)-q'(0),
    \quad
    C
    =
    \frac{bA''}{2}
    \left(
    4a_{0,1}+2p(0)
    \right)
    -q(0)
    +\frac{3bA'''}{4}.
  \end{align}

  A direct differentiation of~\eqref{E:Gamma-n0-explicit} twice with respect to
  $\beta$ gives
  \begin{equation}\label{E:A-closed-form}
    A
    =
    \frac{
    \bigl(C_{n_0}^{(1)}\bigr)^3
    (u^*)^m\Pi
    \left(
    4C_{2n_0}^{(1)}(u^*)^m\Pi
    -\sigma_{2n_0}(\chi^*)
    \right)
    }{
    8\sigma_{2n_0}(\chi^*)(1+v^*)^2
    }.
  \end{equation}
  We now determine its sign. By the definitions of the coefficients,
  \[
    C_{n_0}^{(1)}>0,\qquad
    C_{2n_0}^{(1)}>0,\qquad
    u^*>0,\qquad
    \Pi>0.
  \]

  In the case that $a, b > 0$, since $n_0$ is the unique bifurcation mode and
  $2n_0 \ne n_0$, we have
  \begin{equation}\label{E:A-num-denom}
    \sigma_{2n_0}(\chi^*)<0, \quad {\rm and\,\, then}\,\,
    4C_{2n_0}^{(1)}(u^*)^m\Pi
    -\sigma_{2n_0}(\chi^*)>0.
  \end{equation}
  Therefore, the numerator in~\eqref{E:A-closed-form} is positive. The
  denominator is negative because $ 8\sigma_{2n_0}(\chi^*)(1 + v^*)^2 < 0$.
  Consequently, $A < 0$. In particular, $A$ cannot vanish under the unique
  bifurcation mode assumption.

  In the minimal model, we impose the fixed-mean constraint, so $a_{0,1} = 0$.
  We retain the notation $p(\beta)$, $q(\beta)$, and $\Pi$, with the
  coefficients evaluated for $a = b = 0$ and $a_{0,1} = 0$. In this case, we
  have $\beta_{n_0}(\beta) = -q(\beta) = A\beta^2 + B\beta + C$, where
  \[
    A=-\frac12q''(0),
    \qquad
    B=-q'(0),
    \qquad
    C=-q(0).
  \]
  The calculation of the quadratic coefficient is unchanged and gives the same
  expression as~\eqref{E:A-closed-form}, where $\sigma_{2n_0}(\chi^*)$ is now
  evaluated at $a = 0$. Since $n_0 = 1$ is the unique bifurcation mode on the
  fixed-mean space, $\sigma_{2n_0}(\chi^*) < 0$, because $2n_0 \notin \{0,
  n_0\}$. All the other factors in the numerator are positive,
  and~\eqref{E:A-num-denom} holds. Hence, the numerator is positive and the
  denominator is negative. It follows that $A < 0$ in the minimal model as
  well. Thus, in both the non-minimal and minimal cases, $\beta_{n_0}(\beta)$
  is a quadratic polynomial with a strictly negative leading coefficient.

  In the following, we show that one of the cases (i)-(iv) in
  Theorem~\ref{T:direction-change}(1) occurs. Let $D\coloneq B^2 - 4AC$ and
  $\beta_-$ and $\beta_+$ be the roots of $\beta_{n_0}(\beta)$. We then have
  \begin{enumerate}
    \item[(i)] If $C > 0$, there is exactly one positive root $\beta_+$, with
      $\beta_{n_0} > 0$ on $[0, \beta_+)$ and $\beta_{n_0} < 0$ on
      $(\beta_+,\infty)$.
    \item[(ii)] If $C \le 0$, $B > 0$, and $D > 0$, there are two nonnegative
      roots $0 \le \beta_- < \beta_+$, with $\beta_{n_0} > 0$ precisely between
      them. The equality $\beta_- = 0$ occurs when $C = 0$.
    \item[(iii)] If either $C < 0$, $B > 0$, and $D = 0$, or $C = 0$ and $B \le
      0$, there is one nonnegative root $\beta_0$ at which $\beta_{n_0}(\beta_0)
      = 0$, and $\beta_{n_0}(\beta) < 0$ for every other $\beta \ge 0$.
    \item[(iv)] In all remaining cases with $C < 0$, one has $\beta_{n_0}(\beta)
      < 0$ for every $\beta \ge 0$.
  \end{enumerate}
  This concludes the proof of Theorem~\ref{T:direction-change}(1).
\end{proof}

Table~\ref{Tb:bifurcation-directions} illustrates several of the sign patterns
in Theorem~\ref{T:direction-change}(1); it does not include a zero of the cubic
coefficient. For the non-minimal model (left panel), $\Omega = (0,1)$ and $n_0 =
1$. The parameter vector $(a,b,\alpha,m,\gamma,\nu,\mu,L)$ equals $(1, 1, 1, 1,
1, 1, 1, 1)$ in the first block, $(1, 1, 3, 1, 2, 10, 100, 1)$ in the second
block, and $(1, 1, 5, 1, 1, 1, 1, 1)$ in the third block. For the minimal model
(right panel), $a = b = 0$. The parameter vector $(u^*,m,\gamma,\nu,\mu,L)$
equals $(1, 1, 1, 1, 1, 4)$ in the first block, $(3, 1, 2, 1, 1, 4)$ in the
second block, and $(1, \tfrac13, \tfrac12, 1, 1, 4)$ in the third block.

\begin{table}[htpb]
  \centering
  \caption{Bifurcation directions for the non-minimal model and the minimal
  model.}
  \label{Tb:bifurcation-directions}
  \renewcommand{\arraystretch}{1.15}
  \setlength{\tabcolsep}{3pt}
  \scriptsize
  \begin{tabular}{|c|c|c|c|c|}
    \hline
    \rowcolor{TableRowGray}
    \multicolumn{5}{|c|}{\textbf{Non-minimal model}} \\ \hline
    \rowcolor{TableRowGray}
    $(m,\gamma,n_0)$ & $\beta$ & $\chi^*(u^*)$ & $\beta_1$  & Type \\ \hline
    \multirow{3}{*}{$(1, 1, 1)$}
    & $0$     & $11.9709$     & $+5.7006$  & super-critical \\
    & $2$     & $47.8837$     & $+0.4202$  & super-critical \\
    & $3$     & $95.7674$     & $-2.2322$  & sub-critical   \\ \hline
    \multirow{3}{*}{$(1, 2, 1)$}
    & $1$     & $7.8796$      & $-17.4850$ & sub-critical   \\
    & $5$     & $11.5366$     & $+8.0931$  & super-critical \\
    & $10$    & $18.5798$     & $-3.6136$  & sub-critical   \\ \hline
    \multirow{3}{*}{$(1, 1, 1)$}
    & $0$     & $16.3762$     & $-2.9280$  & sub-critical   \\
    & $1$     & $32.7524$     & $-14.3380$ & sub-critical   \\
    & $5$     & $524.0387$    & $-60.1009$ & sub-critical   \\ \hline
  \end{tabular}\hspace{1em}
  \begin{tabular}{|c|c|c|c|c|}
    \hline
    \rowcolor{TableRowGray}
    \multicolumn{5}{|c|}{\textbf{Minimal model}} \\ \hline
    \rowcolor{TableRowGray}
    $(u^*,m,\gamma,n_0)$ & $\beta$ & $\chi^*(u^*)$ & $\beta_{n_0}$ & Type \\ \hline
    \multirow{3}{*}{$(1, 1, 1, 1)$}
    & $0$     & $1.6169$      & $+0.0097$     & super-critical \\
    & $4$     & $25.8696$     & $+0.0212$     & super-critical \\
    & $5$     & $51.7392$     & $-0.0450$     & sub-critical   \\ \hline
    \multirow{3}{*}{$(3, 1, 2, 1)$}
    & $0$     & $0.0898$      & $-0.0299$     & sub-critical   \\
    & $1$     & $0.8983$      & $+0.0228$     & super-critical \\
    & $4$     & $898.2502$    & $-0.0577$     & sub-critical   \\ \hline
    \multirow{3}{*}{$(1, \tfrac13, \tfrac12, 1)$}
    & $0$     & $3.2337$      & $-0.0211$     & sub-critical   \\
    & $1$     & $6.4674$      & $-0.0512$     & sub-critical   \\
    & $2$     & $12.9348$     & $-0.0881$     & sub-critical   \\ \hline
  \end{tabular}
\end{table}

\subsection{Effects of domain size $L$ and proof of Theorem~\ref{T:direction-change}(2)}
\label{SS:beta-analysis-1}

In this subsection, we explore the changes of bifurcation direction and
bifurcation modes as the domain size $L$ changes, but all the other parameters
are fixed, and prove Theorem~\ref{T:direction-change}(2).

\begin{proof}[Proof of
  Theorem~\ref{T:direction-change}(2)]

  Recall that
  \[
    \beta_{n_0}
    = \frac{(1+\alpha)\alpha\,b\,(u^*)^{\alpha-1}}{4}
    \bigl(4a_{0,1} + 2a_{2n_0,1}\bigr)
    - \chi^*\,\Gamma_{n_0}^{(3)}
    + \frac{(1+\alpha)\alpha(\alpha-1)b\,(u^*)^{\alpha-2}}{8}
  \]
  (see~\eqref{E:beta-n0}). Note that $\beta_{n_0}$ depends on $L$ since
  $a_{2n_0,1}$ and $\chi^*\Gamma_{n_0}^{(3)}$ depend on $\lambda_{n_0} =
  \frac{n_0^2\pi^2}{L^2}$. Put
  \[
    K=\frac{\nu\gamma(u^*)^{m+\gamma-1}}{(1+v^*)^\beta}.
  \]

  We first establish the rational representation. In the non-minimal model, it is not difficult to see that
  \[
    \chi^*=\frac{(\lambda+a\alpha)(\mu+\lambda)}
    {K\lambda},
    \qquad
    a_{2n_0,1}
    =\frac{N_3(\lambda)}
    {3K(\mu+\lambda)(a\alpha\mu-4\lambda^2)},
  \]
  where $N_3$ is a polynomial of degree at most three, and
  \[
    \chi^*\Gamma_{n_0}^{(3)}
    =\frac{(\lambda+a\alpha)N_4(\lambda)}
    {12K^2(\mu+\lambda)^2(a\alpha\mu-4\lambda^2)},
  \]
  where $N_4$ is a polynomial of degree at most four. Substitution into the
  formula for $\beta_{n_0}$ gives
  \[
    \beta_{n_0}
    =\frac{P(\lambda)}
    {12K^2(\mu+\lambda)^2(4\lambda^2-a\alpha\mu)},
  \]
  where $P(\lambda)$ is a polynomial of degree at most $5$ given by
  \[
    P(\lambda )=-12K^2\big(2bA''a_{0,1}+3bA'''/4\big)
    (\mu+\lambda)^2(a\alpha\mu-4\lambda^2)-4KbA''(\mu+\lambda)N_3+(\lambda+a\alpha)
    N_4.
  \]
  In the minimal model, $n_0 = 1$, $a_{0,1} = 0$, and
  \[
    \chi^*=\frac{\mu+\lambda}{K},
    \qquad
    \sigma_2(\chi^*)=-\frac{12\lambda^2}{\mu+4\lambda}.
  \]
  The same substitution and simplification yield
  \[
    \beta_1
    =\frac{P(\lambda)}{24(u^*)^2(\mu+\lambda)^2},
    \qquad \deg P\le3.
  \]

  Next, we consider the sign changes of $P(\lambda_{n_0})$. Recall that, for
  every $k \ge 1$,
  \begin{equation}\label{E:n0-branch}
    n_0= k\ \text{is the unique minimizing mode exactly for}\quad
    L\in\left(L_{k-1},L_k\right),
  \end{equation}
  while at $L = L_k$ the infimum is attained at the two modes $k$ and $k + 1$
  and at no others.

  In the non-minimal model, $n_0 = k$ is the unique minimizing mode when $L \in
  (L_{k-1}, L_k)$. On these intervals,
  \[
    \frac12\sqrt{a\alpha\mu}<\lambda_{n_0}=\lambda_1<\infty \qquad(k=1),
  \]
  and
  \[
    \frac{k}{k+1}\sqrt{a\alpha\mu} <\lambda_{n_0}=\lambda_k<
    \frac{k}{k-1}\sqrt{a\alpha\mu} \qquad(k\ge2).
  \]
  In the minimal model, $n_0 = 1$ for every $L > 0$, and $\lambda_1$ ranges over
  $(0, \infty)$. In either case, $\lambda_{n_0}$ decreases strictly on each
  interval with fixed mode. Since the denominator is positive, a nonzero $P$
  changes sign at its odd-multiplicity zeros. This proves part~(i), including
  the degree bounds.

  Finally, in the non-minimal model,
  \[
    \lambda_{n_0(L)}\longrightarrow \lambda_\infty:=\sqrt{a\alpha\mu}
    \quad(L\to\infty)
  \]
  along admissible lengths. The rational expression is continuous at
  $\lambda_\infty$, so its limit defines $\beta_\infty$. In the minimal model,
  $\lambda_1 \to 0$, and direct calculation gives
  \[
    P(0)=24(u^*)^2\mu^2\beta_\infty^{\min}.
  \]
  Hence, $\beta_1 \to \beta_\infty^{\min}$.

  In both models, polynomial division gives
  \[
    \beta_{n_0}(\lambda)=c\lambda+\mathcal{O}(1) \quad(\lambda\to\infty),
  \]
  with $c = c_{\min}$ in the minimal case. Since $n_0 = 1$ for all sufficiently
  small $L$,
  \[
    \beta_{n_0} =c\frac{\pi^2}{L^2}+\mathcal{O}(1) \quad(L\to0+).
  \]
  The signs of $\beta_\infty$ and $c$, when nonzero, prove parts~(ii) and~(iii),
  respectively.
\end{proof}

\begin{example}\label{Ex:example-1}
  Computation of $\beta_{n_0}$ and switch of bifurcation regimes when $a = b =
  \mu = \nu = m = \alpha = \gamma = 1$
\end{example}

In this example, we discuss the switch of bifurcation regimes when $a = b = \mu
= \nu = m = \alpha = \gamma = 1$. Note that, in this case, we have $u^* = v^* =
1$. Also, by direct calculation, we have
\begin{align*}
  \beta_{n_0}&=\frac{P(\lambda_{n_0},\beta)}{Q(\lambda_{n_0},\beta)}\\
  &=\frac{
  (224\!-\!96\beta)\lambda_{n_0}^4\!+\!(192\!-\!152\beta)\lambda_{n_0}^3 \!+\!(8\!-\!104\beta\!-\!16\beta^2)\lambda_{n_0}^2
  \!+\!(96\!-\!28\beta\!-8\!\beta^2)\lambda_{n_0}\!+\!56\!+\!11\beta\!-\!\beta^2
  }{96(1+\lambda_{n_0})(4\lambda_{n_0}^2-1)}.
\end{align*}
Since $Q(\lambda_{n_0}, \beta) > 0$ for $\lambda_{n_0} > 1/2$, the sign of
$\beta_{n_0}$ is determined by $P(\lambda_{n_0}, \beta)$.

Writing $\lambda = \lambda_{n_0}$, we obtain
\begin{equation}\label{E:new-beta-eq1}
  \beta_{n_0} =\Big(\frac{7}{12}-\frac{\beta}{4}\Big)\lambda +\mathcal{O}(1)
  \qquad(\lambda\to\infty).
\end{equation}
Thus, the small-domain bifurcation is super-critical for $\beta < 7/3$ and
sub-critical for $\beta > 7/3$. For large admissible domains, $\lambda_{n_0}
\to 1$, giving
\begin{equation}\label{E:new-beta-eq2}
  \beta_\infty
  =1-\frac{41}{64}\beta-\frac{25}{576}\beta^2.
\end{equation}
Hence, the large-domain bifurcation is super-critical for $\beta <
\beta_{\mathrm{ld}}$ and sub-critical for $\beta > \beta_{\mathrm{ld}}$, where
\[
  \beta_{\mathrm{ld}} =\frac{3\sqrt{21529}-369}{50}\approx1.423658.
\]
\begin{table}[htbp]
  \centering
  \small
  \renewcommand{\arraystretch}{1.12}
  \setlength{\tabcolsep}{8pt}
  \begin{tabular}{@{}cl@{\hspace{2em}}cl@{}}
    \hline
    $\beta$ & Roots & $\beta$ & Roots \\ \hline
    $1$   & None                   & $2.1$      & $7.370444$ \\
    $1.5$ & $0.773411,\;1.518665$ & $7/3$      & None       \\
    $1.9$ & $0.530181,\;3.826548$ & $2.4$      & None       \\
    $2$   & $5.092791$            & $3$        & None       \\ \hline
  \end{tabular}
  \caption{Roots of $P(\lambda,\beta)$ with $\lambda>1/2$ for selected values of $\beta$, rounded to six decimal places.}
  \label{Tb:polynomial-roots}
\end{table}
For $\lambda > 1/2$, $P(\lambda, \beta)$ decreases strictly with $\beta$.
Moreover, $P(\lambda, 1) > 0$ and $P(\lambda, 7/3) < 0$. Thus, the bifurcation
is super-critical for $0 \le \beta \le 1$ and sub-critical for $\beta \ge 7/3$
at every admissible length.

For intermediate values of $\beta$, the roots in
Table~\ref{Tb:polynomial-roots}, together with the eigenvalue intervals in
Theorem~\ref{T:direction-change}(2), give the following examples as $L$
increases.

For instance, for $\beta = 1.5$ and $\beta = 1.9$, the positive roots are
\[
  \begin{array}{c|cc}
    \beta & \lambda_- & \lambda_+\\ \hline
    1.5 & 0.773411 & 1.518665\\
    1.9 & 0.530181 & 3.826548
  \end{array}
\]
(values rounded), with
\[
  \operatorname{sgn}\beta_{n_0}
  =
  \begin{cases}
    +,                             & \frac12<\lambda<\lambda_-
    \text{ or }\lambda>\lambda_+, \\
    -,                             & \lambda_-<\lambda<\lambda_+.
  \end{cases}
\]
Since $\lambda_k = k^2\pi^2/L^2$ decreases as $L$ increases on $(L_{k-1},L_k)$,
the signs follow the sequences
\[
  \begin{array}{c|ccc}
    \beta & k=1 & k=2 & k\ge3\\ \hline
    1.5 & +\to-\to+ & +\to-\to+
    & \begin{cases}
      -\to+, & k=3, \\
      -,     & k\ge4,
    \end{cases}\\[6pt]
    1.9 & +\to-\to+ & - & -
  \end{array}
\]
where $+$ and $-$ denote super-critical and sub-critical bifurcation,
respectively; the cubic coefficient vanishes at each transition.

\section{Extension of the local bifurcation branches and proof of Theorem~\ref{T:global-extension}}
\label{S:global-Bifurcation}

In this section, we study the extension of the local bifurcation branches and
prove Theorem~\ref{T:global-extension}.

We first collect some estimates for stationary solutions of~\eqref{E:main-PE} to
be used in the proofs of Theorem~\ref{T:global-extension} as well as
Theorem~\ref{T:spiky-solu-thm}. Throughout, $\alpha, \gamma, m, \mu, \nu > 0$,
$\beta \ge 0$.

\begin{lemma}\label{L:spiky-solu-lm1}
  Assume $a, b > 0$. Let $u^* = \big(\frac{a}{b}\big)^{\frac{1}{\alpha}}$. Then
  the following hold.
  \begin{enumerate}[(1)]
    \item For every $\alpha, \gamma, m > 0$, and every nonconstant positive
      stationary solution $(u,v)$ of~\eqref{E:main-PE},
      \begin{equation}\label{E:bound-u-eq1-2}
        0<\min_{[0,L]}u<u^*<\max_{[0,L]}u,
        \quad {\rm and}\quad
        \int_0^L u\,dx<Lu^*,\quad
        \int_0^L u^{1+\alpha}\,dx<L\left(u^*\right)^{1+\alpha}.
      \end{equation}

    \item Assume $0 < \gamma \le 1 + \alpha$ and ${1\le m\le \alpha+2}$. For
      every $\overline{X} > 0$, there exist $\theta \in (0, 1)$ and $ C =
      C(\overline{X}, a, b, \alpha, \gamma, \mu, \nu, L, m, \beta) > 0 $ such
      that every positive nonconstant stationary solution with $0 \le \chi_0 \le
      \overline{X}$ satisfies
      \begin{equation}\label{E:bound-u-eq3}
        \|u\|_{C^{2+\theta}([0,L])},\,\, \|v\|_{C^{2+\theta}([0,L])}\le C.
      \end{equation}
  \end{enumerate}
\end{lemma}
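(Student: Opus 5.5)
For part (1), I will integrate the stationary $u$-equation over $(0,L)$. The flux term vanishes by the Neumann conditions, so
\[
a\int_0^L u\,dx=b\int_0^L u^{1+\alpha}\,dx .
\]
By Jensen (or H\"older), $\int_0^L u^{1+\alpha}\,dx\ge L^{-\alpha}\big(\int_0^L u\,dx\big)^{1+\alpha}$, with strict inequality since $u$ is nonconstant. Writing $M=\int_0^L u\,dx$, this gives $aM>bL^{-\alpha}M^{1+\alpha}$, i.e. $M<Lu^*$. Then
\[
\int_0^L u^{1+\alpha}\,dx=\frac{a}{b}M<L(u^*)^{1+\alpha}.
\]
Positivity of $\min u$ is part of the hypothesis.

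For $\min u<u^*<\max u$, I argue by contradiction. Suppose $\min u\ge u^*$. Then $au-bu^{1+\alpha}\le0$ everywhere, with strict inequality somewhere since $u$ is nonconstant, so the integral identity above gives $0<0$. If $\max u\le u^*$, the same argument with the reverse sign applies. Hence both inequalities are strict.

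For part (2), I will bootstrap from $L^p$ bounds. The first step is an $L^\infty$ bound on $u$ that is uniform for $\chi_0\in[0,\overline X]$. Part (1) gives $\|u\|_{L^1}\le Lu^*$ and $\|u^{1+\alpha}\|_{L^1}\le L(u^*)^{1+\alpha}$. Since $\gamma\le1+\alpha$, the function $u^\gamma$ is bounded in $L^{(1+\alpha)/\gamma}\subset L^1$. The one-dimensional elliptic equation $-v_{xx}+\mu v=\nu u^\gamma$ then bounds $v$ in $W^{2,1}\subset C^1$, so $v$ and $v_x$ are uniformly bounded and $v\ge0$.

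Next I multiply the stationary equation $u_{xx}-\chi_0(u^m(1+v)^{-\beta}v_x)_x+au-bu^{1+\alpha}=0$ by $u^{p-1}$ and integrate by parts:
\[
(p-1)\int u^{p-2}u_x^2\,dx=\chi_0(p-1)\int \frac{u^{m+p-2}}{(1+v)^{\beta}}\,v_xu_x\,dx+\int\big(au^p-bu^{p+\alpha}\big)dx.
\]
Young's inequality bounds the chemotactic term by $\tfrac12\int u^{p-2}u_x^2\,dx+C\int u^{2m+p-2}\,dx$, using $|v_x|\le C$. The condition $m\le\alpha+2$ appears to be designed for this step: I need $2m+p-2\le p+\alpha$ up to lower-order terms. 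Since $2m-2\le 2\alpha+2$, this requires slightly more than what is given. So I will instead write the flux term as $\int u^{m+p-1}v_{xx}$ after integration by parts, substitute $v_{xx}=\mu v-\nu u^\gamma$ (whose sign helps when $\chi_0\ge0$), and use $m+p-1\le p+\alpha+1$.

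This exponent bookkeeping, closing the estimate with logistic damping $-b\int u^{p+\alpha}$ using only $m\le\alpha+2$ and $\gamma\le1+\alpha$ with constants uniform in $\chi_0\le\overline X$, is the step I expect to be the main obstacle. I will try the $v_{xx}$-substitution route first. In one dimension it should suffice to obtain a bound for a single $p$ together with the equation, rather than a full Moser iteration.

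Once I have a uniform $L^p$ bound, one dimension gives an $L^\infty$ bound quickly. I write $u_x=\chi_0u^m(1+v)^{-\beta}v_x-\int_0^x(au-bu^{1+\alpha})\,dy$, which is bounded once $u\in L^\infty$. To reach $L^\infty$ from $L^p$, I use that $u_x-\chi_0u^m(1+v)^{-\beta}v_x$ is controlled in $L^\infty$ by $\|u\|_{L^{1+\alpha}}^{1+\alpha}$. A Gronwall-type argument along $x$, for the ODE $u_x=\chi_0 f(u,v)v_x+g$ with bounded $v_x$ and $\min u<u^*$, then bounds $\max u$. With $u$ bounded, $v\in C^{2+\theta}$ by Schauder theory, and the $u$-equation, written as $u_{xx}=$ terms in $u,u_x,v,v_x,v_{xx}$, gives $u\in C^{2+\theta}$. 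All constants depend only on $\overline X$ and the parameters.
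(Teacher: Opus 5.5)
Your part (1) is correct and matches the paper. The gap is in part (2). The uniform $L^p$ bound you plan to obtain from the $u^{p-1}$-energy estimate is never established, and the $v_{xx}$-substitution route you intend to try does not close. After integrating by parts, the chemotactic term on the right-hand side of your identity equals
\[
\frac{\chi_0(p-1)}{m+p-1}\int_0^L\frac{u^{m+p-1}}{(1+v)^{\beta}}\Big(\nu u^{\gamma}-\mu v+\frac{\beta v_x^2}{1+v}\Big)\,dx .
\]
The $-\nu u^\gamma$ part of $v_{xx}$ therefore produces a \emph{positive}, aggregating term of size $\chi_0\int u^{m+p-1+\gamma}$ when $\chi_0>0$; it does not help.

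Absorbing that term into $b\int u^{p+\alpha}$ would require $m+\gamma\le 1+\alpha$, plus a smallness condition on $\overline X$ at equality. This does not follow from $m\le\alpha+2$ and $\gamma\le 1+\alpha$; for example, $m=1$, $\gamma=1+\alpha$ gives the exponent $p+1+\alpha>p+\alpha$. Your comparison with $p+\alpha+1$ is also off by one relative to the damping exponent $p+\alpha$. Separately, your closing ``Gronwall along $x$ with bounded $v_x$'' does not bound $u$ by itself when $m>1$: the comparison $u_x\le Cu^m+C$ allows blow-up inside $[0,L]$ once $C$ is large.

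The missing idea, which is what the paper uses, makes the energy estimate unnecessary. Regard the first integral as a \emph{linear} ODE in $u$,
\[
u_x=\Big(\chi_0\frac{u^{m-1}v_x}{(1+v)^\beta}\Big)u-F(x),\qquad F(x)=\int_0^x\big(au-bu^{1+\alpha}\big)\,dy,
\]
and solve it by variation of constants from a point $x_0$ with $u(x_0)<u^*$. The exponent of the integrating factor is bounded by $\overline X\,\|v_x\|_\infty\int_0^L u^{m-1}\,dx$. Since $0\le m-1\le 1+\alpha$, H\"older's inequality and part (1) give $\int_0^L u^{m-1}\,dx\le L(u^*)^{m-1}$. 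This is exactly where the hypothesis $1\le m\le\alpha+2$ enters, not in any $L^p$ bookkeeping.

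Combine this with $\|v_x\|_\infty\le 2\nu\int_0^L u^\gamma\,dx\le 2\nu L(u^*)^\gamma$, which uses $\gamma\le 1+\alpha$, and with $\|F\|_\infty< 2aLu^*$. You obtain $\|u\|_\infty\le C$ directly, with $C$ depending only on $\overline X$ and the parameters. Your remaining bootstrap then goes through: bound $u_x$, then $v_{xx}$, then apply Schauder estimates. Take $\theta<\min\{1,\gamma\}$, or $\theta<\min\{1,\gamma,m-1\}$ when $m>1$, so that $u^\gamma$ and $u^{m-1}$ are uniformly H\"older even if $\min u$ is small.
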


\begin{proof}
  (1) Note that $u_t = 0$. Then by the first equation in~\eqref{E:main-PE}, we
  have
  \begin{equation}\label{E:mass-identity}
    \int_{0}^L \big(a u-b u^{{1}+{\alpha}}\big)dx=0,\quad {\rm hence}\quad  a\int_0^L u\,dx=b\int_0^L u^{1+\alpha}\,dx.
  \end{equation}
  Since $u$ is positive and nonconstant, $u - u^*$ changes sign. Hence,
  \[
    0<\inf_{x\in[0,L]}u(x)<u^*<\sup_{x\in [0,L]}u(x).
  \]
  By strict H\"older's inequality, it holds
  \[
    \Big(\int_0^L u\,dx\Big)^{1+\alpha}
    <
    L^\alpha\int_0^L u^{1+\alpha}\,dx
    =L^\alpha(u^*)^\alpha\int_0^L u\,dx.
  \]
  \eqref{E:bound-u-eq1-2} then follows.

  \smallskip

  (2) Fix $\overline{X} > 0$ and assume $0 \le \chi_0 \le \overline{X}$. Let
  $(u,v)$ be a positive nonconstant stationary solution of \eqref{E:main-PE}.
  First, note that
  \begin{equation}\label{E:elliptic-u-eq}
    0=u_{xx}+b_1(x) u_x+c_1(x),\quad 0<x<L,\,\,\, u_x(0)=u_x(L)=0,
  \end{equation}
  and
  \begin{equation}\label{E:elliptic-v-eq}
    0=v_{xx}+b_2(x)v+c_2(x),\quad 0<x<L,\,\, v_x(0)=v_x(L)=0,
  \end{equation}
  where $b_1(x) = -\chi_0 m \frac{u^{m-1}(x) v_x(x)}{(1+v)^\beta}$, $c_1(x) =
  -\chi_0 \frac{u^m}{(1+v)^\beta} (\mu v-\nu u^\gamma) +\chi_0 \beta
  u^m\frac{|v_x|^2}{(1+v)^{1+\beta}}+u(a-bu^\alpha)$, $b_2(x) = -\mu$, $c_2(x) =
  \nu u^\gamma(x)$. By a priori estimates for elliptic equations, it then
  suffices to prove that $b_1(x)$, $c_1(x)$, $b_2(x)$, and $c_2(x)$ are
  uniformly H\"older continuous on $[0,L]$. For this, it suffices to prove that
  $u, u_x, v, v_x$, and $v_{xx}$ are uniformly bounded on $[0,L]$.

  Next, we prove the uniform boundedness of $\|v\|_\infty$ and $\|v_x\|_\infty$.
  Note that for any $0 \le \eta \le 1+\alpha$, H\"older's inequality
  and~\eqref{E:bound-u-eq1-2} yield
  \begin{equation}\label{E:M-eta}
    M_\eta:=\int_0^L u^\eta\,dx\le L\left(u^*\right)^\eta.
  \end{equation}
  By the second equation in~\eqref{E:main-PE}, we have
  \[
    \int_0^L v(x)dx=\frac{\nu}{\mu}M_\gamma \le \frac{\nu}{\mu} L (u^*)^\gamma,
    \quad {\rm and}\quad
    v_x(x)=\mu\int_0^x v(x)dx-\nu\int_0^x u^\gamma(x)dx.
  \]
  It then follows that
  \begin{equation}\label{E:vx-sup}
    \|v_x\|_\infty
    \le\mu\int_0^L v\,dx+\nu\int_0^L u^\gamma\,dx
    =2\nu\int_0^L u^\gamma\,dx\le2\nu L\left(u^*\right)^\gamma,
  \end{equation}
  and there is $x_0 \in [0,L]$ such that $v(x_0) \le \frac{\nu}{\mu}
  (u^*)^\gamma$, which implies that
  \begin{equation}\label{E:v-infty-eq}
    \|v\|_\infty \le |v(x_0)|+\sup_{x\in [0,L]}\Big|\int_{x_0}^ x v_y(y)dy\Big|\le \frac{\nu}{\mu}(u^*)^\gamma +2\nu L^2\left(u^*\right)^\gamma.
  \end{equation}

  Now, we prove the uniform boundedness of $\|u\|_\infty$ and $\|u_x\|_\infty$.
  Notice that
  \begin{equation}\label{E:first-integral}
    u_x=\chi_0\frac{u^m}{(1+v)^{\beta}}v_x-F(x),
    \qquad F(x):=\int_0^x\left(au-bu^{1+\alpha}\right)\,dy,
  \end{equation}
  and
  \begin{equation}\label{E:F-bound}
    \|F\|_\infty
    \le a\int_0^L u\,dx+b\int_0^L u^{1+\alpha}\,dx<2aLu^*.
  \end{equation}
  Choose $x_0 \in [0,L]$ such that $u(x_0) < u^*$. We then have
  \[
    u(x)=u(x_0)e^{\chi_0 \int_{x_0}^ x\frac{u^{m-1}(y)}{(1+v(y))^\beta}v_y(y)dy}-\int_{x_0}^x
    e^{\chi_0\int_z^x \frac{u^{m-1}(y)}{(1+v(y))^\beta} v_y(y)dy} F(z) dz\quad \forall \, x\in [0,L].
  \]
  This together with~\eqref{E:M-eta} with $\eta = m-1( \le \alpha+1)$,
  \eqref{E:vx-sup}, \eqref{E:first-integral} and~\eqref{E:F-bound} implies that
  \begin{equation}\label{E:u-ux-sup}
    \|u\|_\infty\le C
    \quad {\rm and}\quad
    \|u_x\|_\infty\le C
  \end{equation}
  for some $C = C(\overline{X}, a, b, \alpha, \gamma, \mu, \nu, L, m, \beta) >
  0$.

  Finally, by the second equation in~\eqref{E:main-PE} again, we have
  \begin{equation}\label{E:vxx-sup}
    \|v_{xx}\|_\infty\le \mu\|v\|_\infty+\nu\|u^\gamma\|_\infty.
  \end{equation}
  By $m \ge 1$ and the uniform bounds of $\|u\|_\infty$, $\|u_x\|_\infty$,
  $\|v\|_\infty$, $\|v_x\|_\infty$, and $\|v_{xx}\|_\infty$, the coefficients
  $b_1,c_1,b_2$, and $c_2$ in~\eqref{E:elliptic-u-eq}
  and~\eqref{E:elliptic-v-eq} are in $C^\theta([0,L])$ with $0 < \theta <
  \min\{1,\gamma\}$ when $m = 1$ and $0 < \theta < \min\{1,\gamma,m-1\}$ when $m
  > 1$. Then, by a priori estimates for elliptic equations, there is $C =
  C(\overline{X}, a, b, \alpha, \gamma, \mu, \nu, L, m, \beta) > 0$ such that
  \eqref{E:bound-u-eq3} holds.
\end{proof}

Similarly, we have the following analogue of Lemma~\ref{L:spiky-solu-lm1} for
the minimal model.

\begin{lemma}\label{L:minimal-spike-bounds}
  Assume that $a = b = 0$,  and $1 \le m \le 2$, $0 < \gamma \le 1$. Let
  $(u^*,v^*)$ be a positive constant stationary solution of~\eqref{E:main-PE}.
  Then the following hold.
  \begin{enumerate}[label=(\arabic*)]
    \item    Let $(u,v)$ be a positive nonconstant  stationary  solution
      of~\eqref{E:main-PE} with $\int_0^L u(x)dx = L u^*$. It holds that $0 <
      \min_{[0,L]}u < u^* < \max_{[0,L]}u$ and $\int_0^L u^2\,dx > L(u^*)^2$.

    \item For every $\bar{X} > 0$, there are $\theta \in (0, 1)$ and $C =
      C(\overline{X}, u^*, \mu, \nu, \beta, \gamma, L, m) > 0$ such that every
      positive nonconstant stationary solution with $\int_0^L u(x)\,dx = Lu^*$
      and $0 \le \chi_0 \le
      \overline{X}$ satisfies
      \begin{equation}\label{E:bound-u-eq4}
        \|u\|_{C^{2+\theta}([0,L])},\,\, \|v\|_{C^{2+\theta}([0,L])}\le C.
      \end{equation}
  \end{enumerate}
\end{lemma}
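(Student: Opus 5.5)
For part~(1), I would argue directly from the fixed-mass constraint. Since $\int_0^L u\,dx = Lu^*$ and $u$ is positive and nonconstant, the continuous function $u-u^*$ has zero mean and is not identically zero. It therefore takes both signs, which gives $0<\min_{[0,L]}u<u^*<\max_{[0,L]}u$. The Cauchy--Schwarz inequality gives $(Lu^*)^2=\big(\int_0^L u\,dx\big)^2\le L\int_0^L u^2\,dx$. Equality would force $u$ to be constant, so the inequality is strict and $\int_0^L u^2\,dx>L(u^*)^2$.

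For part~(2), I would follow the proof of Lemma~\ref{L:spiky-solu-lm1}(2). The goal is uniform bounds on $u,u_x,v,v_x,v_{xx}$, after which elliptic Schauder estimates finish the argument.

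\emph{Bounds on $v$.} Since $0<\gamma\le1$, H\"older's inequality gives $M_\gamma=\int_0^L u^\gamma\,dx\le L^{1-\gamma}(Lu^*)^\gamma=L(u^*)^\gamma$. Integrating the $v$-equation gives $\int_0^L v\,dx=\frac{\nu}{\mu}M_\gamma$. As in \eqref{E:vx-sup}, writing $v_x(x)=\mu\int_0^x v-\nu\int_0^x u^\gamma$ yields $\|v_x\|_\infty\le 2\nu L(u^*)^\gamma$. Choosing $x_0$ with $v(x_0)\le\frac{\nu}{\mu}(u^*)^\gamma$, exactly as in \eqref{E:v-infty-eq}, bounds $\|v\|_\infty$.

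\emph{Bounds on $u$.} Since $a=b=0$, integrating the stationary $u$-equation from $0$ and using $u_x(0)=v_x(0)=0$ gives the first integral \eqref{E:first-integral} with $F\equiv0$:
\[
  (\log u)_x=\chi_0\frac{u^{m-1}}{(1+v)^\beta}v_x .
\]
Because $1\le m\le2$, we have $0\le m-1\le1$, and H\"older's inequality gives $\int_0^L u^{m-1}\,dx\le L(u^*)^{m-1}$. Combined with $\|v_x\|_\infty\le C$, $(1+v)^{-\beta}\le1$ and $\chi_0\le\overline{X}$, this yields
\[
  |\log u(x)-\log u(y)|\le \overline{X}\,\|v_x\|_\infty\,L(u^*)^{m-1}\eqqcolon C_0
  \quad\text{for all }x,y\in[0,L].
\]
Taking $y$ with $u(y)=u^*$ (such a point exists by part~(1) and continuity) gives the two-sided bound $u^*e^{-C_0}\le u\le u^*e^{C_0}$. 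The first integral then bounds $\|u_x\|_\infty$, and the $v$-equation gives $\|v_{xx}\|_\infty\le\mu\|v\|_\infty+\nu\|u^\gamma\|_\infty$.

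\emph{Schauder step.} The uniform positive lower bound on $u$ is the one point that differs from the non-minimal case. It makes $u^\gamma$ and $u^{m-1}$ smooth functions of $u$ on the relevant range, even when $\gamma<1$ or $m<2$. Hence the coefficients in \eqref{E:elliptic-u-eq}--\eqref{E:elliptic-v-eq}, with $a=b=0$, are uniformly Lipschitz, and elliptic a priori estimates give \eqref{E:bound-u-eq4} for some $\theta\in(0,1)$. I expect no serious obstacle; the only care needed is checking that the H\"older exponents $\gamma\le1$ and $m-1\le1$ are what make the integral bounds on $u^\gamma$ and $u^{m-1}$ available.
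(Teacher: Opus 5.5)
Your proposal is correct and follows essentially the same argument as the paper. Part (1) goes by the zero-mean and Cauchy--Schwarz reasoning. Part (2) goes by the flux identity $u_x=\chi_0u^m v_x/(1+v)^\beta$, the mass-based bounds on $v$ and $v_x$, integration of $(\log u)_x$ using the H\"older bound on $\int_0^L u^{m-1}\,dx$ (this is where $\gamma\le1$ and $m\le2$ enter), and then elliptic Schauder estimates. The only small deviation is that you record the lower bound $u\ge u^*e^{-C_0}$ and use it to make the coefficients uniformly Lipschitz, whereas the paper obtains H\"older coefficients, with exponent limited by $\gamma$ and $m-1$, without a lower bound; both work.
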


\begin{proof}
  (1) The strict inequalities involving the minimum and maximum follow from
  continuity, nonconstancy, and the prescribed average. The strict $L^2$
  inequality follows from the Cauchy--Schwarz inequality, since equality would
  require $u$ to be constant.

  (2) Integration of the first stationary equation gives
  \begin{equation}\label{E:minimal-spike-flux}
    u_x=\chi_0\frac{u^m}{(1+v)^\beta}v_x.
  \end{equation}
  The proof then follows the same arguments as in
  Lemma~\ref{L:spiky-solu-lm1}(2), with $a = b = \alpha = 0$, which show that
  $u, u_x, v, v_x$, and $v_{xx}$ are uniformly bounded. Then by a priori
  estimates for elliptic equations, there are $\theta \in (0, 1)$ and
  $C = C(\overline{X}, u^*, \mu, \nu, \beta, \gamma, L, m) > 0$ such
  that~\eqref{E:bound-u-eq4} holds.
\end{proof}

\subsection{Proof of Theorem~\ref{T:global-extension}(1)}

In this subsection, we give the outline of the proof of
Theorem~\ref{T:global-extension}(1).

\begin{proof}[Outline of the proof of Theorem~\ref{T:global-extension}(1)]
  Let $(u^*,v^*)$ be a positive constant stationary solution
  of~\eqref{E:main-PE}. Let $X$ and $\mathcal{A} = \mathcal{A}(u^*)$ be as
  in~\eqref{E:X-eq} and~\eqref{E:A-eq}, respectively. Let
  \begin{equation}\label{E:Y-eq}
    Y:=\begin{cases} L^2((0,L))\qquad &{\rm if}\quad a,b>0\cr\cr
      \{u\in L^2((0,L))\,|\, \int_0^L u(x)dx=0\}\quad &{\rm if}\quad a=b=0
    \end{cases}
  \end{equation}
  with norm $\|u\| = \|u\|_{L^2((0,L))}$. Let
  \begin{equation}\label{E:B-eq}
    \mathcal{B}=\mathcal{B}(u^*):=\{u\in X\,|\, u+u^*\in\mathcal{A}\}.
  \end{equation}
  For $w \in \mathcal{B}$, let $v(w)$ solve
  \begin{equation}\label{E:aux-v-eq}
    \begin{cases}
      -v_{xx}+\mu v=\nu(w+u^*)^\gamma,\quad &x\in (0,L)\cr
      v_x(0)=v_x(L)=0,
    \end{cases}
  \end{equation}
  and set
  \[
    F(\chi_0,w)
    =w_{xx}
    -\chi_0\Big(
    \frac{(w+u^*)^m (v(w))_x}{(1+v(w))^\beta}
    \Big)_x
    +a(w+u^*)-b(w+u^*)^{1+\alpha}.
  \]
  Then $F$ is a mapping from $(0, \infty)\times\mathcal{B} \to Y$. If
  $(\chi_0,w) \in (0, \infty)\times \mathcal{B}$ satisfies $F(\chi_0, w) = 0$,
  then $(u,v) = (w + u^*,v(w))$ is a positive stationary solution
  of~\eqref{E:main-PE}, where $v(w)$ solves~\eqref{E:aux-v-eq}.

  We prove Theorem~\ref{T:global-extension}(1) by applying the global
  bifurcation theorem of Shi and Wang~\cite[Theorem~4.4]{shi.wang:09:on} to the
  mapping $F:(0,\infty)\times\mathcal{B} \subset (0, \infty)\times X \to Y$.

  The set $\mathcal{B}$ is open and convex in $X$. The operator $F$ is smooth,
  and its linearizations and their convex interpolations are compact
  perturbations of the Neumann Laplacian, hence Fredholm of index zero. At $p^*
  = (\chi^*,0)$, the Fourier representation gives
  \[
    \ker D_wF(\chi^*, 0)=\operatorname{span}\{\phi_{n_0}\},
    \qquad
    D_{\chi_0w}F(\chi^*,0)\phi_{n_0}
    =\alpha_{n_0}\phi_{n_0}
    \notin\operatorname{range}\big(D_wF(\chi^*,0)\big),
  \]
  since $\alpha_{n_0} > 0$ and the range is orthogonal to $\phi_{n_0}$. The
  domain spaces have Hilbert norms, so the remaining smoothness hypotheses
  of~\cite[Theorem~4.4]{shi.wang:09:on} also hold.

  Hence, \cite[Theorem~4.4]{shi.wang:09:on} yields  that the connected sets
  $\mathcal{C}_{n_0}^{\pm}$ stated in Theorem~\ref{T:global-extension}(1)
  satisfy alternatives (i)--(iii). Moreover, it is clear that
  \[
    \mathcal{S}=
    \{(\chi_0, u^*+w)\,|\, \chi_0\in (0,\infty), \,\,
    w\in\mathcal{B}\setminus\{0\},\,\, F(\chi_0,w)=0\}.
  \]
  Since $\mathcal{C}$ is a connected component of $\overline{\mathcal{S}}
  \subset (0, \infty)\times {\mathcal{A}}$ (see~\eqref{E:closure-S}) (the
  closure is taken in $\mathbb{R}\times\mathcal{A}$), it is relatively closed in
  $(0, \infty)\times
  \mathcal{A}$. By local uniqueness of the bifurcating curve, after decreasing
  $\varepsilon$ if necessary, $\mathcal{G}_{n_0}^{\mp}$ is relatively open in
  $\mathcal{C}$. It follows that $\mathcal{C}\setminus\mathcal{G}_{n_0}^{\mp}$
  is relatively closed in $(0, \infty)\times\mathcal{A}$. Therefore, its
  connected component $\mathcal{C}_{n_0}^{\pm}$ containing
  $\mathcal{G}_{n_0}^{\pm}$ is relatively closed in $(0,
  \infty)\times\mathcal{A}$. Moreover,
  $\mathcal{G}_{n_0}^{\pm}\cup\{(\chi^*,u^*)\}$ is a connected subset of
  $\mathcal{C}\setminus\mathcal{G}_{n_0}^{\mp}$. Hence, $(\chi^*,u^*) \in
  \mathcal{C}_{n_0}^{\pm}$.
\end{proof}

\subsection{Proof of Theorem~\ref{T:global-extension}(2)}

\begin{proof}[Proof of Theorem~\ref{T:global-extension}(2)]
  Suppose $\mathcal{C}_{n_0}^{+}$ is not compact but its $\chi_0$-coordinate is
  bounded. Choose a sequence of nonconstant stationary solutions $(\chi_j,u_j) =
  (\chi_j,w_j + u^*) \in \mathcal{C}_{n_0}^{+}$ with no convergent subsequence
  in $\mathcal{C}_{n_0}^+$. Without loss of generality, we may assume that
  \[
    0<\chi_j\le\overline{X},
    \qquad
    \chi_j\to\bar{\chi}\ge0
  \]
  for some $\overline{X} > 0$ and $\bar{\chi} \ge 0$. By
  Lemma~\ref{L:spiky-solu-lm1}(2) and regularity for elliptic equations, we have
  \[
    \sup_j\left(
    \|u_j\|_{C^{2+\theta}}
    +\|v_j\|_{C^{2+\theta}}
    \right)<\infty
  \]
  for some $\theta > 0$. Hence, after extraction and relabeling, we may assume
  that
  \[
    u_j\to\bar{u},\qquad v_j\to\bar{v}
    \quad\text{in }C^2([0,L]).
  \]
  The limit is a nonnegative stationary solution at $\chi_0 = \bar{\chi}$.

  By the relative closedness of $\mathcal{C}_{n_0}^+$ in $(0,
  \infty)\times\mathcal{A}$, $(\bar{\chi},\bar{u})$ lies on the boundary of $(0,
  \infty)\times\mathcal{A}$. By Lemma~\ref{L:spiky-solu-lm1}(2),
  $(u_j(x),v_j(x))$ converges to $(\bar{u}(x),\bar{v}(x))$ in $C^2([0, L])$ and
  $(\bar{u}(x),\bar{v}(x))$ is a nonnegative stationary solution
  of~\eqref{E:main-PE} with $\chi_0 = \bar{\chi}$. By
  Lemma~\ref{L:spiky-solu-lm1}(1), $\sup_{x\in [0,L]}\bar{u}(x) \ge u^*$. Then
  strong comparison principle for elliptic equations implies that $\bar{u}(x) >
  0$ for all $x \in [0, L]$.  Thus $(\bar{\chi},\bar{u})$ belongs to the closure of
  $\mathcal{S}$ in $\mathbb{R}\times\mathcal{A}$. By~\eqref{E:closure-S},
  $\bar{\chi} > 0$. Hence, $(\bar{\chi},\bar{u}) \in (0, \infty)\times\mathcal{A}$,
  contradicting the boundary conclusion above. Therefore, if
  $\mathcal{C}_{n_0}^+$ is not compact, then its $\chi_0$-coordinate is
  unbounded. The same argument applies to $\mathcal{C}_{n_0}^-$.
\end{proof}

\subsection{Proof of Theorem~\ref{T:global-extension}(3)}

\begin{proof}[Proof of Theorem~\ref{T:global-extension}(3)]
  First, we prove $u^+(\chi_0)$ is monotone for $\chi_0$ near $\chi^*$.
  From~\eqref{E:two-branches}, set $s =
  \sqrt{\frac{\alpha_1}{\beta_1}\tilde{\chi}} + \mathcal{O}(|\tilde{\chi}|)$,
  and write the positive local half-branch as
  \[
    u^+=u^*+s\cos\big(\tfrac{\pi x}{L}\big)+r_s,
    \qquad s>0,\qquad \|r_s\|_{C^2}=o(s).
  \]
  Write $u_s = u^+$, and denote its sensitivity and signal by $\chi_s$ and
  $v_s$. The signal equation gives $v_s = v^* + s\eta\phi_1 + o(s)$ in $C^2$,
  where $\eta = \nu\gamma(u^*)^{\gamma-1}/(\mu + \lambda_1)$. Since $\chi_s \to
  \chi^*$ and $\chi^*{(u^*)^m}\eta/(1 + v^*)^\beta = 1$,
  differentiating~\eqref{E:minimal-spike-flux} gives $u_s'' = s\phi_1'' + o(s)$
  uniformly. Together with $H^2\hookrightarrow C^1$, this yields $\|r_s\|_{C^2}
  = o(s)$. The Neumann conditions imply $r_s'(0) = r_s'(L) = 0$. Integrating
  $r_s''$ from either endpoint yields
  \[
    |r_s'(x)|
    \le\|r_s''\|_\infty\min\{x,L-x\}.
  \]
  Since $\sin\left(\frac{\pi x}{L}\right) \ge \frac{2}{L}\min\{x,L - x\}$, we
  obtain
  \[
    (u^+)'(x)
    \le\Big(
    -\frac{2\pi s}{L^2}+\|r_s''\|_\infty
    \Big)\min\{x,L-x\}<0
    \qquad(0<x<L)
  \]
  for sufficiently small $s > 0$. Although $(u^+)'(0) = (u^+)'(L) = 0$, for
  every $0 \le x_1 < x_2 \le L$,
  \[
    u^+(x_2)-u^+(x_1)
    =\int_{x_1}^{x_2}(u^+)'(x)\,dx<0.
  \]
  Thus $u^+$ is strictly decreasing on $[0, L]$. Similarly, $u^-$ is strictly
  increasing on $[0, L]$.

  Next, we prove all the solutions in $\mathcal{C}_1^\pm$ are monotone. By the
  first equation in~\eqref{E:main-PE} with $a = b = 0$ and $u_t = 0$, we have
  \begin{equation}\label{E:y-eq}
    u_x=\frac{\chi_0u^{m}}{(1+v)^\beta}y,
    \qquad
    y_{xx}
    +\Big(
    \frac{\chi_0\nu\gamma u^{m+\gamma-1}}{(1+v)^\beta}-\mu
    \Big)y=0,
    \qquad y=v_x,\quad y(0)=y(L)=0.
  \end{equation}
  For a nonconstant solution, uniqueness implies
  \[
    y(x_0)=y_x(x_0)=0
    \quad\Longrightarrow\quad
    y\equiv0
    \quad\Longrightarrow\quad
    u\equiv u^*,
  \]
  a contradiction. Thus, every zero of $v_x(x)$ is simple, which implies that
  every zero of $u_x(x)$ is simple. Therefore, strict monotonicity persists
  under sufficiently small perturbations among stationary solutions.

  Suppose that a sequence of decreasing nonconstant solutions satisfies
  $(\chi_j,u_j) \to (\widehat{\chi}, u^*)$ in $(0, \infty)\times\mathcal{A}$. If
  $D_wF(\widehat{\chi}, 0)$ were invertible, the implicit function theorem would
  exclude such a sequence. Hence, $\widehat{\chi} = \chi_n(u^*)$ for some $n \ge
  1$, where~\eqref{E:chi-n-eq-new1} with $a = 0$ gives strictly increasing
  critical values. At each such value the kernel is
  $\operatorname{span}\{\phi_n\}$ and the parameter crossing is transversal, as
  follows from~\eqref{E:sigma_n}. The local simple-eigenvalue bifurcation
  theorem~\cite[Theorem~4.3]{shi.wang:09:on} applied to the stationary mapping
  $F$ therefore gives
  \[
    u_j=u^*+s_j\phi_n+r_j,
    \qquad s_j\ne0,\quad \|r_j\|_{H^2}=o(|s_j|).
  \]
  If $n \ge 2$, the derivative of $\phi_n$ takes both signs in $(0, L)$. Since
  $H^2(0,L)\hookrightarrow C^1([0, L])$, the derivative of $u_j$ would also take
  both signs for large $j$, a contradiction. Thus, $n = 1$ and $\widehat{\chi} =
  \chi^*$.

  Consequently,
  \[
    \{(\chi^*,u^*)\}\cup
    \left\{
    (\chi_0,u)\in\mathcal{C}_1^+:
    u_x<0\text{ on }(0,L)
    \right\}
  \]
  is relatively open and closed in $\mathcal{C}_1^+$.

  Near a positive stationary limit, $H^2$ convergence bootstraps to $C^2$
  convergence. A nonconstant limit of decreasing solutions remains strictly
  decreasing by uniqueness in~\eqref{E:y-eq}. The interior sign and simple
  endpoint zeros persist under small stationary perturbations. Constant limits
  occur only at $(\chi^*,u^*)$; there, local uniqueness and removal of the
  opposite half-arc retain the decreasing orientation.

  Connectedness, and the analogous argument for $\mathcal{C}_1^-$, yield
  \[
    u_x<0\text{ on }\mathcal{C}_1^+\setminus\{(\chi^*,u^*)\},
    \qquad
    u_x>0\text{ on }\mathcal{C}_1^-\setminus\{(\chi^*,u^*)\},
  \]
  with the inequalities understood on $(0, L)$. Thus, alternative~(ii) is
  excluded. Moreover,
  \[
    \int_0^L(u-u^*)\phi_1\,dx
    =-\frac{L}{\pi}\int_0^L
    u_x\sin\big(\frac{\pi x}{L}\big)\,dx\ne0,
  \]
  excluding alternative~(iii). Hence, both connected sets are noncompact.

  Suppose that sensitivity is bounded on $\mathcal{C}_1^+$. Since this component
  is noncompact, choose a sequence of nonconstant solutions $(\chi_j,u_j) \in
  \mathcal{C}_1^+$ with no convergent subsequence in $\mathcal{C}_1^+$.
  Lemma~\ref{L:minimal-spike-bounds}(2) gives, after extraction,
  \[
    \chi_j\to\bar{\chi}\ge0,\qquad
    (u_j,v_j)\to(\bar{u},\bar{v})
    \quad\text{in }C^2([0,L]).
  \]
  By the relative closedness of $\mathcal{C}_{1}^+$ in $(0,
  \infty)\times\mathcal{A}$, $(\bar{\chi},\bar{u})$ lies on the boundary of $(0,
  \infty)\times\mathcal{A}$. By Lemma~\ref{L:minimal-spike-bounds}(2),
  $(u_j(x),v_j(x))$ converges to $(\bar{u}(x),\bar{v}(x))$ in $C^2([0, L])$ and
  $(\bar{u}(x),\bar{v}(x))$ is a nonnegative stationary solution
  of~\eqref{E:main-PE} with $\chi_0 = \bar{\chi}$. Note that $\int_0^L \bar{u}(x)\,dx = L u^*$. Hence, $\bar{u}\not \equiv 0$ and strong comparison
  principle for elliptic equations implies that $\bar{u}(x) > 0$ for all $x \in
  [0, L]$. Thus, $(\bar{\chi},\bar{u})$ belongs to the closure of $\mathcal{S}$ in
  $\mathbb{R}\times\mathcal{A}$. By~\eqref{E:closure-S}, $\bar{\chi} > 0$. Hence,
  $(\bar{\chi},\bar{u}) \in (0, \infty)\times\mathcal{A}$, contradicting the
  boundary conclusion above. Hence~\eqref{E:bifurcation-branches} holds.  The
  same argument applies to $\mathcal{C}_1^-$.

  Finally, adapting the arguments of~\cite[Theorem 3.1]{wang.xu:13:spiky}
  and~\cite[Theorem 3.1]{li:18:spiky}, we now prove that the mass of $u$
  concentrates at $x = 0$ along $\mathcal{C}_1^+$ and at $x = L$ along
  $\mathcal{C}_1^-$.

  Let $(\chi_j,u_j) \in \mathcal{C}_1^+$ with $\chi_j \to \infty$, and let $v_j$
  be the corresponding signal. Then $u_j$ is strictly decreasing and $\int_0^L
  u_j\,dx = Lu^*$.  The $v$-estimate in the proof of
  Lemma~\ref{L:minimal-spike-bounds} yields
  \[
    \|v_j\|_\infty\le V:=\frac{\nu}{\mu}(u^*)^\gamma+2\nu L^2(u^*)^\gamma,
  \]
  independently of $j$ and $\chi_j$. By~\eqref{E:y-eq}, the function $w_j = -
  (v_j)_x$ is positive on $(0, L)$ and satisfies
  \[
    -w_j''=
    \Big(
    \frac{\chi_j\nu\gamma u_j^{m+\gamma-1}}{(1+v_j)^\beta}-\mu
    \Big)w_j.
  \]

  We claim that $u_j(\delta) \to 0$ for every $\delta \in (0, L)$. Otherwise,
  after passing to a subsequence, $u_j(\delta) \ge \varepsilon$ for some
  $\varepsilon > 0$. Monotonicity then implies that, on $I = [\delta/2,
  \delta]$,
  \[
    \frac{\chi_j\nu\gamma u_j^{m+\gamma-1}}{(1+v_j)^\beta}-\mu
    \ge
    \frac{\chi_j\nu\gamma\varepsilon^{m+\gamma-1}}{(1+V)^\beta}-\mu
    \longrightarrow\infty
    \qquad\text{uniformly}.
  \]
  Set $\psi(x) = \sin\left(\frac{2\pi(x-\delta/2)}{\delta}\right)$, and
  $\lambda_I = \frac{4\pi^2}{\delta^2}$. Integration by parts gives
  \[
    \begin{aligned}
      \int_{\delta/2}^{\delta}
      \Big(
      \frac{\chi_j\nu\gamma u_j^{m+\gamma-1}}{(1+v_j)^\beta}-\mu
      \Big)w_j\psi\,dx
      &
      =\lambda_I\int_{\delta/2}^{\delta}w_j\psi\,dx
      +w_j(\delta)\psi'(\delta)
      -w_j(\delta/2)\psi'(\delta/2) \\
      &
      <\lambda_I\int_{\delta/2}^{\delta}w_j\psi\,dx,
    \end{aligned}
  \]
  which is impossible, since the coefficient of $w_j$ exceeds $\lambda_I$
  throughout $I$ for large $j$. Thus, $u_j(\delta) \to 0$, and
  \[
    \int_\delta^L u_j(x)\,dx
    \le (L-\delta)u_j(\delta)\longrightarrow0.
  \]
  Together with the fixed mass, this implies
  \[
    \int_0^L f(x)u_j(x)\,dx\longrightarrow Lu^*f(0)
    \qquad\text{for every }f\in C([0,L]).
  \]
  Hence, $u_j(x)\,dx$ converges weakly-* to $Lu^*\delta_0$ as finite measures on
  $[0, L]$. By reflection, the corresponding limit along $\mathcal{C}_1^-$ is
  $Lu^*\delta_L$. In particular, $\|u_j\|_\infty \to \infty$ along either
  branch.
\end{proof}

\section{Spike-like stationary solutions  and proof of Theorem~\ref{T:spiky-solu-thm}}
\label{S:spiky-solu}

In this section, we prove Theorem~\ref{T:spiky-solu-thm}.

\subsection{Constant limits at large sensitivity}

We use a common form of the convergence argument in~\cite[Theorem~4.3, arXiv
version~1]{kurt.shen.ea:24:stability}.

\begin{lemma}\label{L:minimal-spike-limit}
  Fix all model parameters except $\chi_0$. Let $(u_n,v_n)$ be positive
  stationary solutions of~\eqref{E:main-PE} with $\chi_0 = \chi_n \to \infty$.
  Assume either $a, b > 0$, with $u^* = (a/b)^{1/\alpha}$, or $a = b = 0$ and
  $\int_0^L u_n\,dx = Lu^* > 0$. Suppose $\sup_n\|u_n\|_{L^r(0,L)} < \infty$ for
  some finite $r > \max\{1, \gamma\}$, and put $q = r/\gamma > 1$. There exist a
  subsequence $\{n_k\}$ and a constant $c \ge 0$ such that
  \begin{equation}\label{E:minimal-spike-limit-1}
    u_{n_k}^\gamma\rightharpoonup c\,\,\text{ in }\,\, L^q,
    \quad v_{n_k}\to\frac\nu\mu c\,\, \text{ in }\,\, C^1,
    \quad v_{n_k}\rightharpoonup\frac\nu\mu c\,\, \text{ in }\,\, W^{2,q}.
  \end{equation}
\end{lemma}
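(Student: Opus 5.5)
The plan is to extract weak limits from the uniform $L^r$ bound and then use the first integral of the $u$-equation, divided by $\chi_n$, to show that the limiting signal has no gradient.

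\emph{Compactness.} Since $\sup_n\|u_n\|_{L^r}<\infty$ and $q=r/\gamma>1$, the sequence $u_n^\gamma$ is bounded in $L^q$. After passing to a subsequence, $u_{n_k}^\gamma\rightharpoonup g$ in $L^q$ for some $g\ge0$. The signal equation $-v_n''+\mu v_n=\nu u_n^\gamma$ with Neumann data gives a uniform $W^{2,q}$ bound on $v_n$. Since $W^{2,q}(0,L)$ embeds compactly into $C^1([0,L])$ for $q>1$, we get, along a further subsequence, $v_{n_k}\to\bar v$ in $C^1$ and $v_{n_k}\rightharpoonup\bar v$ in $W^{2,q}$. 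Passing to the limit in the weak form shows that $\bar v$ solves
\[
-\bar v''+\mu\bar v=\nu g,\qquad \bar v'(0)=\bar v'(L)=0,
\]
and that $\bar v\ge0$. It then suffices to prove $\bar v'\equiv0$. In that case $\bar v$ is a constant, $\nu g=\mu\bar v$, so $g\equiv c:=\mu\bar v/\nu\ge0$ and \eqref{E:minimal-spike-limit-1} follows.

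\emph{Mass bound and flux identity.} We have $\int_0^L u_n\,dx\le Lu^*$: in the non-minimal case this follows from Lemma~\ref{L:spiky-solu-lm1}(1), or is an equality if $u_n\equiv u^*$; in the minimal case it holds by assumption. As in \eqref{E:first-integral} and \eqref{E:minimal-spike-flux},
\[
(u_n)_x=\chi_n\frac{u_n^m}{(1+v_n)^\beta}(v_n)_x-F_n,
\]
with $\|F_n\|_\infty\le 2aLu^*$ by \eqref{E:F-bound}, and $F_n\equiv0$ in the minimal case. The $C^1$ convergence also gives a uniform bound $\|v_n\|_\infty\le V$.

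\emph{Key step: $\bar v'\equiv0$.} Suppose, for instance, that $\bar v'>0$ somewhere, and let $(x_1,x_2)$ be a maximal open interval on which $\bar v'>0$. Take a compact subinterval $I\subset(x_1,x_2)$ on which $(v_{n_k})_x\ge\delta>0$ for all large $k$. Fix $\eta>0$. Wherever $u_{n_k}\ge\eta$ on $I$,
\[
(u_{n_k})_x\ge \chi_{n_k}\delta\eta^m(1+V)^{-\beta}-C=:M_k\to\infty .
\]
Hence once $u_{n_k}$ reaches the level $\eta$ inside $I$, it stays above $\eta$ and grows at least linearly with slope $M_k$ for the rest of $I$. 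The bound $\int_0^L u_{n_k}\le Lu^*$ then forces the set $\{x\in I: u_{n_k}(x)>\eta\}$ to be a terminal subinterval of length $O(M_k^{-1/2})\to0$. Because $u_{n_k}^\gamma$ is bounded in $L^q$ with $q>1$, it is equi-integrable, so the contribution of these shrinking sets to $\int_I u_{n_k}^\gamma\varphi$ vanishes for every nonnegative test function $\varphi$. Therefore $g\le\eta^\gamma$ a.e.\ on $I$. Letting $\eta\to0$ and exhausting $(x_1,x_2)$ by such subintervals gives $g=0$ on $(x_1,x_2)$. There $\bar v''=\mu\bar v\ge0$, so $\bar v'$ is nondecreasing on $(x_1,x_2)$ and $\bar v'(x_2)>0$. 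By maximality of the interval this forces $x_2=L$, which contradicts $\bar v'(L)=0$. The case $\bar v'<0$ somewhere is symmetric: $u$ must then decrease rapidly wherever it is $\ge\eta$, and we argue toward the left endpoint.

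I expect the main obstacle to be this last step, namely turning the pointwise gradient blow-up into the vanishing of the weak limit $g$. It relies on the mass bound and equi-integrability rather than on any bound for $u_n^m$, which is why no restriction on $m$ is needed.
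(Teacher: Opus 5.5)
Your argument is correct. The compactness step and the flux identity are the same as in the paper, but you carry out the two key steps differently.

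\emph{Vanishing of the weak limit.} To show that the weak limit of $u_n^\gamma$ (the paper's $W$, your $g$) vanishes where the limit signal is strictly monotone, the paper uses a test function. It multiplies the flux identity by a cutoff $\varphi\in C_c^1(0,L)$ equal to one on a closed interval $J$ where $V_x$ has a fixed sign, and integrates by parts once. This gives $\chi_n\kappa\sigma\int_J u_n^m\,dx\le M\|\varphi_x\|_\infty+2aM\|\varphi\|_1$, i.e.\ $\int_J u_n^m\,dx=O(\chi_n^{-1})$. Chebyshev and H\"older then turn this into $\int_J u_n^\gamma\,dx\to0$. You argue pointwise instead. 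The slope bound $M_k$ and the mass bound squeeze each superlevel set $\{u_{n_k}>\eta\}\cap I$ into an end-interval of length $O(M_k^{-1/2})$, and equi-integrability in $L^q$ finishes.

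\emph{Conclusion.} The paper exhausts all components of $\{V_x\ne0\}$ to get $WV_x=0$ a.e.\ on $(0,L)$. It then uses the first integral $V_x^2-\mu V^2$, which is therefore constant, evaluated at the extrema of $V$. You need only one component. There $\bar v''=\mu\bar v\ge0$ makes $\bar v'$ nondecreasing, so $\bar v'$ cannot return to zero at the endpoint required by maximality or by the Neumann condition. The case $\bar v'<0$ is the mirror image at the left endpoint.

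\emph{What each route buys.} The paper's route treats both signs of $V_x$ at once and gives a clean quantitative bound on $\int_J u_n^m$. Yours is more elementary and purely local at the final step. It also shows explicitly that, on an interval where the signal increases, the part of $u_n$ above any fixed level is pushed into a shrinking neighborhood of the right end.

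You are also slightly more careful than the paper about the mass bound. Lemma~\ref{L:spiky-solu-lm1}(1) is stated only for nonconstant solutions, so the case $u_n\equiv u^*$ has to be noted separately, as you do.
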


\begin{proof}
  First, by the boundedness assumption of $\{u_n\}$ in $L^r$, $\{v_n\}$ is
  bounded in $W^{2,q}$. By weak compactness and the compact embedding
  $W^{2,q}(0,L)\hookrightarrow C^1([0, L])$, without loss of generality, we may
  assume that there are nonnegative functions $W, V$ such that
  \[
    u_{n}^\gamma\rightharpoonup W\text{ in }L^q,
    \qquad v_{n}\to V\text{ in }C^1,
    \qquad v_{n}\rightharpoonup V\text{ in }W^{2,q}.
  \]
  Note that
  \begin{equation}\label{E:lm2-v-limit}
    -V_{xx}+\mu V=\nu W,\qquad V_x(0)=V_x(L)=0.
  \end{equation}
  Also, $(1 + v_n(x))^{-\beta} \ge \kappa > 0$ for some constant $\kappa > 0$
  and all $x \in [0, L]$. In both models, Lemma~\ref{L:spiky-solu-lm1}(1) or
  fixed mass gives $\int_0^L u_n\,dx \le M: = Lu^*$. For every $m > 0$, direct
  integration of the stationary $u$--equation from $0$ to $x$, using the Neumann
  conditions at $0$, gives
  \begin{equation}\label{E:un-eq}
    (u_n)_x=\chi_n\frac{u_n^m}{(1+v_n)^\beta}(v_n)_x-F_n,
  \end{equation}
  where
  \[
    F_n(x)=\int_0^x\left(au_n(y)-bu_n(y)^{1+\alpha}\right)\,dy,
    \qquad \|F_n\|_\infty\le2aM.
  \]
  Note that $F_n = 0$ in the minimal case.

  Next, we show that $V(\cdot)$ is a constant function. Assume that $V(\cdot)$
  is not a constant function. Choose a non-empty closed interval $J \subset
  \{V_x \ne 0\}\cap(0, L)$ and $0 \le \varphi \in C_c^1(0,L)$ equal to one on
  $J$, supported where $V_x$ has a fixed nonzero sign. Then for all large $n$,
  $(v_{n})_x$ has that sign and $|(v_{n})_x| \ge \sigma > 0$ for some $\sigma >
  0$ and $x$ on the support. By~\eqref{E:un-eq}, we have
  \[
    \chi_n\kappa\sigma\int_Ju_n^m\,dx
    \le\Big|-\int_0^L u_n\varphi_x\,dx+\int_0^L F_n\varphi\,dx\Big|
    \le M\|\varphi_x\|_\infty+2aM\|\varphi\|_1.
  \]
  Thus, $\int_Ju_n^m\,dx \to 0$. With $H = \sup_n\|u_n\|_r$, H\"older's and
  Chebyshev's inequalities yield, for every $\varepsilon > 0$,
  \[
    \int_Ju_n^\gamma\,dx
    \le\varepsilon^\gamma|J|
    + H^\gamma\Big(\varepsilon^{-m}\int_Ju_n^m\,dx\Big)^{1-\gamma/r}.
  \]
  Letting $n \to \infty$ and then $\varepsilon \to 0$ shows that $W = 0$ almost
  everywhere on $J$. Exhausting the components of $\{V_x \ne 0\}$ gives $WV_x =
  0$ almost everywhere. Multiplication of~\eqref{E:lm2-v-limit} by $V_x$
  therefore gives
  \[
    \partial_x(V_x^2-\mu V^2)=0\quad\text{almost everywhere}.
  \]
  Note that $V_x^2 - \mu V^2$ is absolutely continuous and hence constant. At
  the extrema of $V$, including the Neumann endpoints, $V_x = 0$. Thus, $(\min
  V)^2 = (\max V)^2$, and nonnegativity makes $V$ constant, which contradicts
  the assumption that $V$ is not a constant function.
  Equation~\eqref{E:lm2-v-limit} then gives $W = c$ and $V = \nu c/\mu$.
\end{proof}

\subsection{Proof of Theorem~\ref{T:spiky-solu-thm}}

\begin{proof}
  We first prove~\eqref{E:spiky-thm-eq2} for both models at once.

  To this end, choose $x_n \in [0, L]$ such that $u_n(x_n) = \max_{[0,L]}u_n$.
  Extract a further subsequence with $x_n \to x^* \in [0, L]$. Assume
  that~\eqref{E:spiky-thm-eq2} does not hold with this $x^*$. Then, after
  passing to a further subsequence if necessary, we may assume that there is
  $\delta > 0$ such that
  \begin{equation}\label{E:unif-bound-u_x-n}
    |(u_n)_x|\le K\quad\text{on }I\quad {\rm for}\quad n\gg 1,
  \end{equation}
  where $I = [x^* - \delta, x^* + \delta]\cap[0, L]$. Note that, for all large
  $n$, $x_n \in I$, and for any $x \in I$,
  \[
    \max_{[0,L]}u_n=u_n(x_n)=u_n(x)+\int_x^{x_n}  (u_n)_x(y)dy.
  \]
  It then follows that
  \begin{equation}\label{E:unif-bound-u-n}
    \max_{[0,L]}u_n
    \le\frac1{|I|}\int_Iu_n\,dx+K|I|
    \le\frac{Lu^*}{|I|}+K|I|\quad \forall\, n\gg 1.
  \end{equation}
  Consequently, for every $p > 1$,
  \begin{equation}\label{E:lp-bound-u-n}
    \limsup_{n\to\infty} \int_0^L u_n^p(x)dx<\infty.
  \end{equation}
  By the assumption $ \liminf_{n\to\infty}\max_{[0,L]}u_n > u^*$ in the theorem,
  we may assume that
  \begin{equation}\label{E:u-n-limit}
    h_n:=u_n(x_n)=\max_{[0,L]}u_n\longrightarrow\, \ell{ > }\,u^*.
  \end{equation}

  Fix $r > \max\{1, \gamma\}$ and set $q = r/\gamma > 1$. The uniform $L^\infty$
  bound implies boundedness in $L^r(0, L)$. By
  Lemma~\ref{L:minimal-spike-limit}, there exist a subsequence $\{n_k\}$ and a
  constant $c \ge 0$ such that
  \begin{equation}\label{E:new-spike-limit}
    u_{n_k}^\gamma\rightharpoonup c\,\,\text{ in }\,\, L^q,
    \quad v_{n_k}\to\frac\nu\mu c\,\, \text{ in }\,\, C^1,
    \quad v_{n_k}\rightharpoonup\frac\nu\mu c\,\, \text{ in }\,\, W^{2,q}.
  \end{equation}
  By~\eqref{E:unif-bound-u_x-n} and~\eqref{E:unif-bound-u-n}, we may assume that
  $u_{n_k}(x) \to U(x)$ uniformly on $I$ as $k \to \infty$ for some $U \in
  C(I)$. Since $u_n^\gamma\rightharpoonup c$, we have $U^\gamma = c$ on $I$.
  Moreover, $x_n \to x^*$ and $u_n(x_n) \to \ell$ imply $U(x^*) = \ell$, so $c =
  \ell^\gamma$. It then follows that
  \[
    0\le\int_0^L(h_{n_k}^\gamma-u_{n_k}^\gamma)\,dx
    \longrightarrow L\ell^\gamma-Lc=0.
  \]
  Thus, $u_{n_k}^\gamma \to \ell^\gamma$ in $L^1$. This implies that $L\ell =
  \lim_{n_k}\int_0^L u_{n_k}\,dx \le Lu^*$, which
  contradicts~\eqref{E:u-n-limit} in either model. Therefore, along the
  subsequence with $x_n \to x^*$,~\eqref{E:spiky-thm-eq2} holds with this $x^*$.

  Next, we prove~\eqref{E:spiky-thm-eq3}. By Lemma~\ref{L:spiky-solu-lm1},
  \[
    \limsup_{n\to\infty} \int_0^L  u_n^{1+\alpha}(x)dx<\infty.
  \]
  Then by the assumption $\gamma < 1 + \alpha$ and
  Lemma~\ref{L:minimal-spike-limit}, \eqref{E:new-spike-limit} holds with $q =
  \frac{1+\alpha}{\gamma}$. This together with~\eqref{E:bound-u-eq1-2} and
  H\"older's inequality implies that
  \[
    cL=\lim_{n_k\to\infty}\int_0^L u_{n_k}^\gamma(x)dx\le L (u^*)^\gamma.
  \]
  \eqref{E:spiky-thm-eq3} then follows. The further extraction
  preserves~\eqref{E:spiky-thm-eq2}, since the local gradient norms tend to
  infinity; relabeling gives the same subsequence for both conclusions.
\end{proof}

\section{Numerical analysis of the bifurcation and spike-like solutions}
\label{SS:bifurcation-numerics}

In this section, we numerically investigate bifurcation and the formation of
spike-like profiles in the minimal and non-minimal models. We first describe the
spatial discretization and prove that the discrete instability threshold
converges to its continuous counterpart as the mesh is refined. We then present
numerical experiments for parameters in the super-critical and sub-critical
regimes, comparing the finite-time evolution near the instability threshold and
examining the profiles at larger values of $\chi_0$.

\subsection{Numerical scheme and discrete threshold on a mesh and proof of Proposition~\ref{P:semidiscrete-threshold}}
\label{SS:numerical-scheme}

In this subsection, we describe the finite-difference schemes for the two
models. For the non-minimal model, we discretize the space variable
in~\eqref{E:main-PE} by a finite-difference method on $N$ uniform subintervals,
with $N + 1$ grid points, and obtain the following discretized system of ODEs:
\begin{align}\label{E:main-DIS}
  \begin{dcases}
    u_i'
    = (\Delta_h u)_i
    + \beta\chi_0\,\dfrac{u_i^m}{(1+v_i)^{\beta+1}} (D_h v_i)^2
    - m\chi_0\,\dfrac{u_i^{m-1}}{(1+v_i)^\beta} (D_h u_i)(D_h v_i)\cr
    \hphantom{\dot{u}_i = {}}
    - \chi_0\,\dfrac{u_i^m}{(1+v_i)^\beta}
    \left(\mu v_i-\nu u_i^\gamma\right)
    + au_i - b u_i^{1+\alpha},
    \quad i=0,1,\ldots,N,\cr
    0 = (\Delta_h v)_i - \mu v_i + \nu u_i^\gamma,
    \quad i=0,1,\ldots,N,\cr
    u_{-1}=u_1,\ u_{N+1}=u_{N-1},\
    v_{-1}=v_1,\ v_{N+1}=v_{N-1}.
  \end{dcases}
\end{align}
Here $x_i = ih$, $h = L/N$, $u_i(t)\approx u(t, x_i)$, and $v_i(t)\approx v(t,
x_i)$. For $1 \le i \le N - 1$, set
\[
  D_h w_i \coloneqq \frac{w_{i+1}-w_{i-1}}{2h},
  \qquad
  (\Delta_h w)_i \coloneqq \frac{w_{i+1}-2w_i+w_{i-1}}{h^2},
\]
while the ghost-point convention gives
\[
  D_h w_0 = D_h w_N = 0,
  \qquad
  (\Delta_h w)_0 = \frac{2(w_1-w_0)}{h^2},
  \qquad
  (\Delta_h w)_N = \frac{2(w_{N-1}-w_N)}{h^2}.
\]
The vector $(u_i^*,v_i^*) = (u^*,v^*)$, $0 \le i \le N$, is a constant solution
of~\eqref{E:main-DIS}.

For the minimal model, we use the same grid and elliptic solver, but its
constant equilibria are parametrized by the conserved mean. We therefore compare
stability within one fixed-mass class. Changing the prescribed mean changes the
reference constant equilibrium; the neutral constant mode is excluded from the
instability calculation. The continuous $u$--equation preserves mass because it
is in divergence form and the Neumann conditions eliminate the boundary flux.
The expanded pointwise scheme~\eqref{E:main-DIS} has no exact discrete product
rule and need not preserve that identity. Therefore, for the minimal experiments
we instead discretize the complete diffusion--chemotaxis flux. Set
\[
  q_i\coloneqq\frac{u_i^m}{(1+v_i)^\beta},
  \qquad
  \mathcal{F}_{i+\frac12}
  \coloneqq
  \frac{u_{i+1}-u_i}{h}
  -\chi_0\frac{q_i+q_{i+1}}{2}\frac{v_{i+1}-v_i}{h}.
\]
We use
\begin{equation}\label{E:main-DIS-conservative}
  u_i'=
  \begin{cases}
    2\mathcal{F}_{\frac12}/h,                                       & i=0,           \\
    \left(\mathcal{F}_{i+\frac12}-\mathcal{F}_{i-\frac12}\right)/h, & 1\le i\le N-1, \\
    -2\mathcal{F}_{N-\frac12}/h,                                    & i=N.
  \end{cases}
\end{equation}
The endpoint nodes in~\eqref{E:main-DIS-conservative} represent half control
volumes, so the trapezoidal discrete mass
\[
  M_h(u)\coloneqq
  h\left(\frac{u_0}{2}+\sum_{i=1}^{N-1}u_i+\frac{u_N}{2}\right)
\]
satisfies $dM_h(u)/dt = 0$ by telescoping.
We work at fixed mass and use mean-zero perturbations
\[
  u_0(x)=u^*\pm \varepsilon\,\phi_{n_0}(x),
\]
so that $\frac{1}{L}\int_0^L u_0(x)\,dx = u^*$.

As mentioned in the introduction, $\chi_N^{*,{\rm disc}}(u^*,\beta)$ given
in~\eqref{E:disc-threshold} is the instability threshold of the constant
solution $(u^*,v^*)$ of~\eqref{E:main-DIS}. For the minimal model, the same
threshold applies to the conservative scheme~\eqref{E:main-DIS-conservative} on
the fixed-mass subspace: at a constant state, its linearization is
$\Delta_h\delta u - \chi_0q^*\Delta_h\delta v$, with $q^* = (u^*)^m/(1 +
v^*)^\beta$, which agrees with the expanded scheme when $a = b = 0$. In the
following, we prove Proposition~\ref{P:semidiscrete-threshold}, that is,
$\lim_{N\to\infty} \chi_N^{*,{\rm disc}}(u^*,\beta) = \chi_{a,b,\beta}^*(u^*)$.

\begin{proof}[Proof of Proposition~\ref{P:semidiscrete-threshold}]
  Recall that discrete Neumann eigenvalues $\lambda_{n,N}^{\rm disc}$ are given
  by~\eqref{E:disc-eigenvalues}. For every fixed $n \ge 1$, with $N \ge n$,
  \[
    \lambda_{n,N}^{\rm disc}
    \longrightarrow
    \lambda_n
    =
    \left(\frac{n\pi}{L}\right)^2 \quad { {\rm as}\quad N\to\infty}.
  \]
  Moreover, since $\sin x \ge 2x/\pi$ for $0 \le x \le \pi/2$, $
  \lambda_{n,N}^{\rm disc} \ge \frac{4n^2}{L^2}.$

  Let
  \begin{align}\label{E:chi_N-disc}
    F(\lambda)
    \coloneqq
    \frac{(1+v^*)^\beta}
    {\nu\gamma (u^*)^{m+\gamma-1}}
    \frac{(\lambda+a\alpha)(\mu+\lambda)}{\lambda}.
  \end{align}
  and $C\coloneqq \frac{(1+v^*)^\beta} {\nu\gamma (u^*)^{m+\gamma-1}}$. Then
  $F(\lambda) \ge C\lambda$. On the other hand, $\sup_{N\ge1}
  F(\lambda_{1,N}^{\rm disc}) < \infty$. Consequently, there exists $K \ge 1$,
  independent of $N$, such that no mode $n > K$ can minimize either the discrete
  or the continuum thresholds. Thus, for all sufficiently large $N$,
  \[
    \chi_N^{*,{\rm disc}}(u^*,\beta)
    =
    \min_{1\le n\le K}
    F\left(\lambda_{n,N}^{\rm disc}\right),
    \qquad
    \chi_{a,b,\beta}^*(u^*)
    =
    \min_{1\le n\le K}F(\lambda_n).
  \]
  Since convergence on this finite set of modes is uniform,
  \eqref{E:disctere-limit-eq} then follows.
\end{proof}

\subsection{Numerical experiments and observations}
\label{SS:experiments}

In this subsection, we use these schemes to illustrate the super-critical and
sub-critical cases in both models. We compare the near-threshold computations
with the local theory and examine the profiles at the reported final times for
larger sensitivities.

In Figures~\ref{F:beta2-super-non-minimal}--\ref{F:beta5-sub-minimal}, the value
denoted by $\chi^*$ in each plot title is the discrete threshold $\chi_N^{*,
{\rm disc}}$ for the mesh used in that experiment. The first rows of
Figures~\ref{F:beta2-super-non-minimal} and~\ref{F:beta3-sub-non-minimal} show
near-threshold experiments for the non-minimal model with $\beta = 2$ and $\beta
= 3$, respectively. These choices correspond to the first block in the left
panel (non-minimal model) of Table~\ref{Tb:bifurcation-directions} and lie on
opposite sides of the transition $\beta_+\approx2.1586$. Thus,
Figure~\ref{F:beta2-super-non-minimal} uses parameters in the super-critical
regime, whereas Figure~\ref{F:beta3-sub-non-minimal} uses parameters in the
sub-critical regime.

\begin{figure}[htbp]
  \centering
  \begin{subfigure}[b]{0.45\textwidth}
    \centering
    \includegraphics[width=\textwidth]{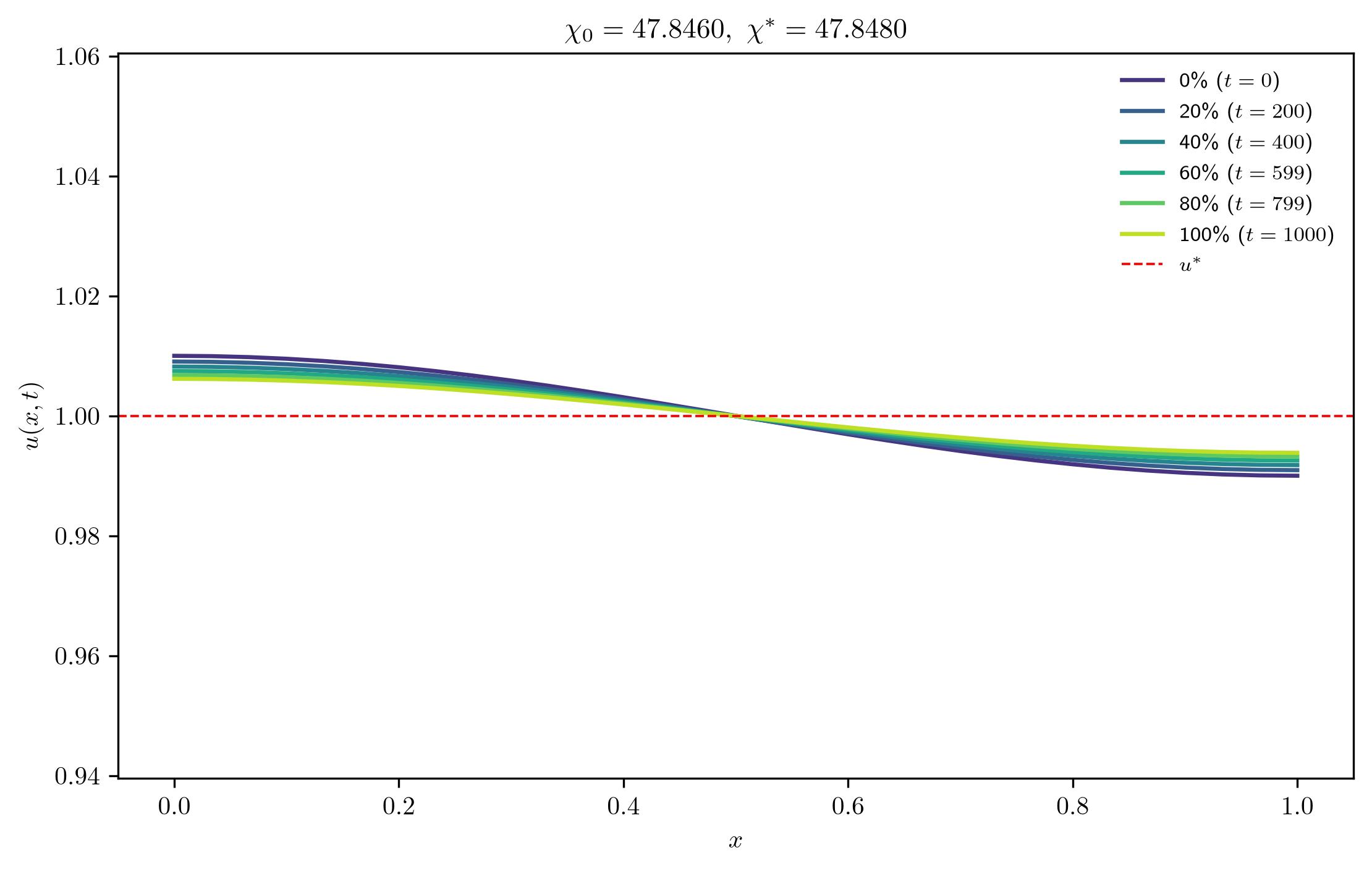}
  \end{subfigure}
  \hfill
  \begin{subfigure}[b]{0.45\textwidth}
    \centering
    \includegraphics[width=\textwidth]{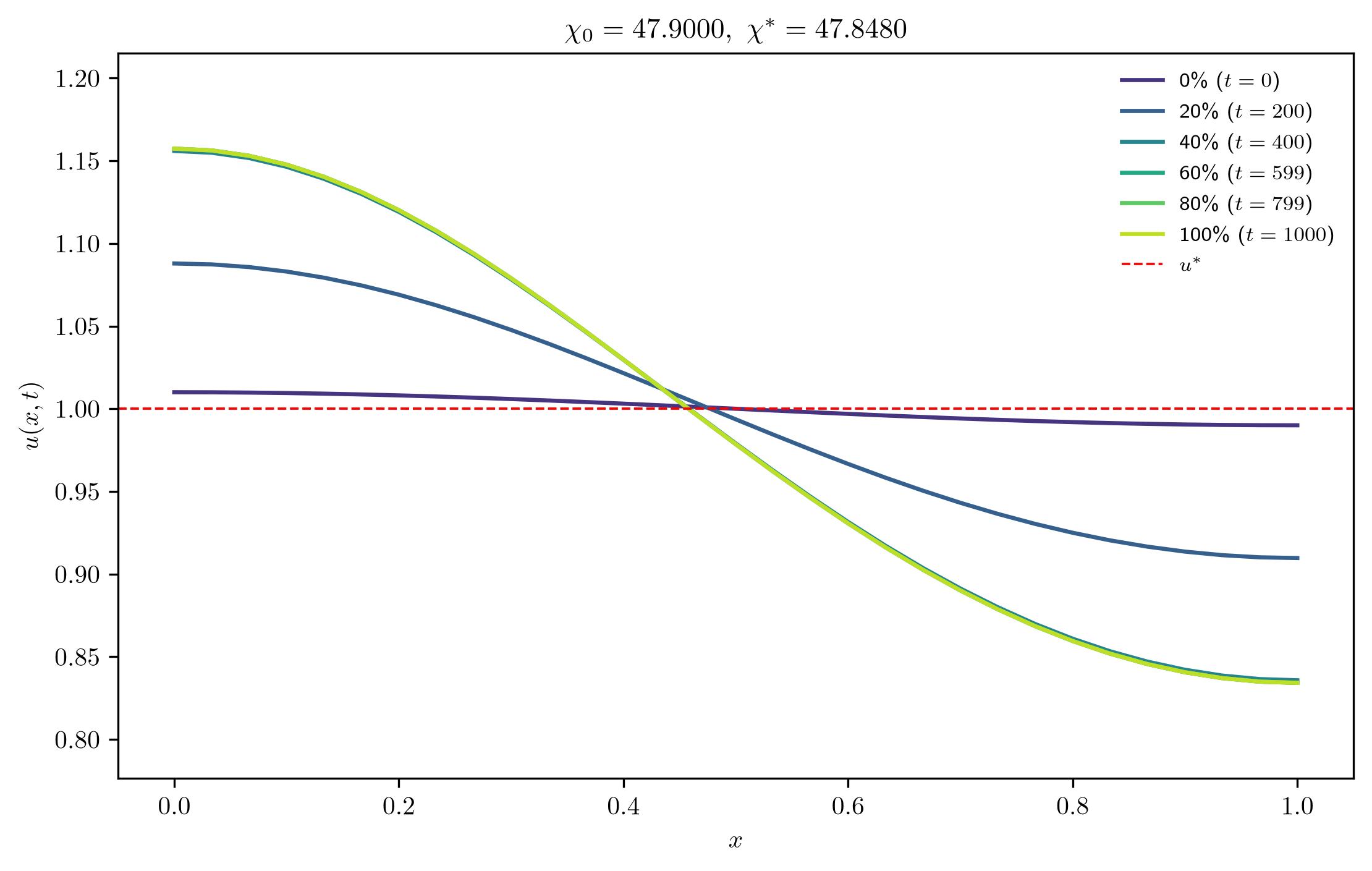}
  \end{subfigure}
  \hfill
  \begin{subfigure}[b]{0.45\textwidth}
    \centering
    \includegraphics[width=\textwidth]{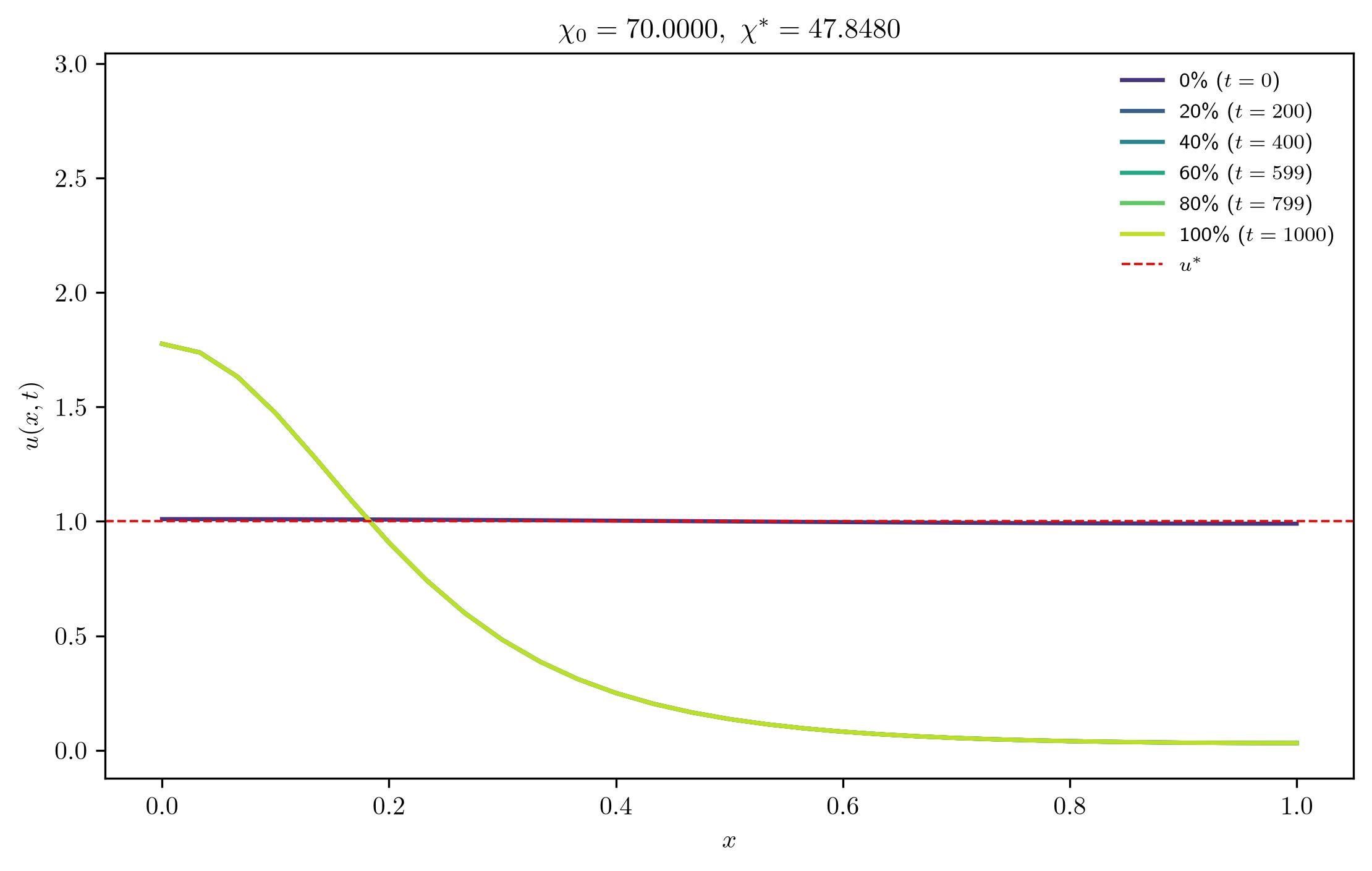}
  \end{subfigure}
  \hfill
  \begin{subfigure}[b]{0.45\textwidth}
    \centering
    \includegraphics[width=\textwidth]{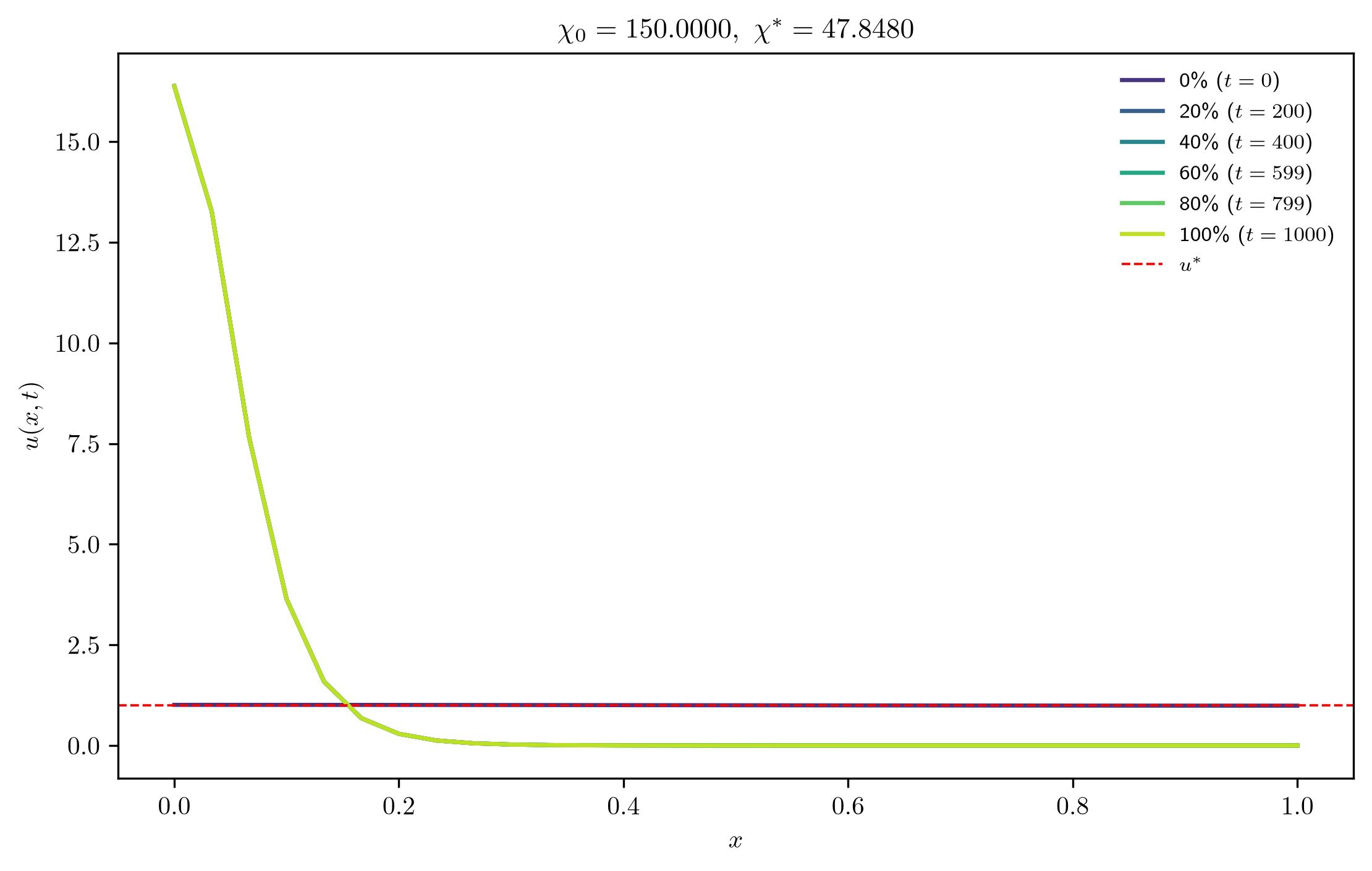}
  \end{subfigure}

  \caption{Numerical experiments for the parameters $a = b = \alpha = m = \mu =
  \nu = \gamma = L = 1$, $\beta = 2$, $\varepsilon = 0.01$, mesh size $N =
  30$, and final time $t = 1000$, $u_0 = 1 + 0.01\cos(\pi x)$, $\chi_0 \in
  \{47.846, \ 47.9, \ 70, \ 150\}$, and $\chi_{30}^{*,{\rm disc}} = 47.8480$.}

  \label{F:beta2-super-non-minimal}
\end{figure}

\begin{figure}[htbp]
  \centering
  \begin{subfigure}[b]{0.45\textwidth}
    \centering
    \includegraphics[width=\textwidth]{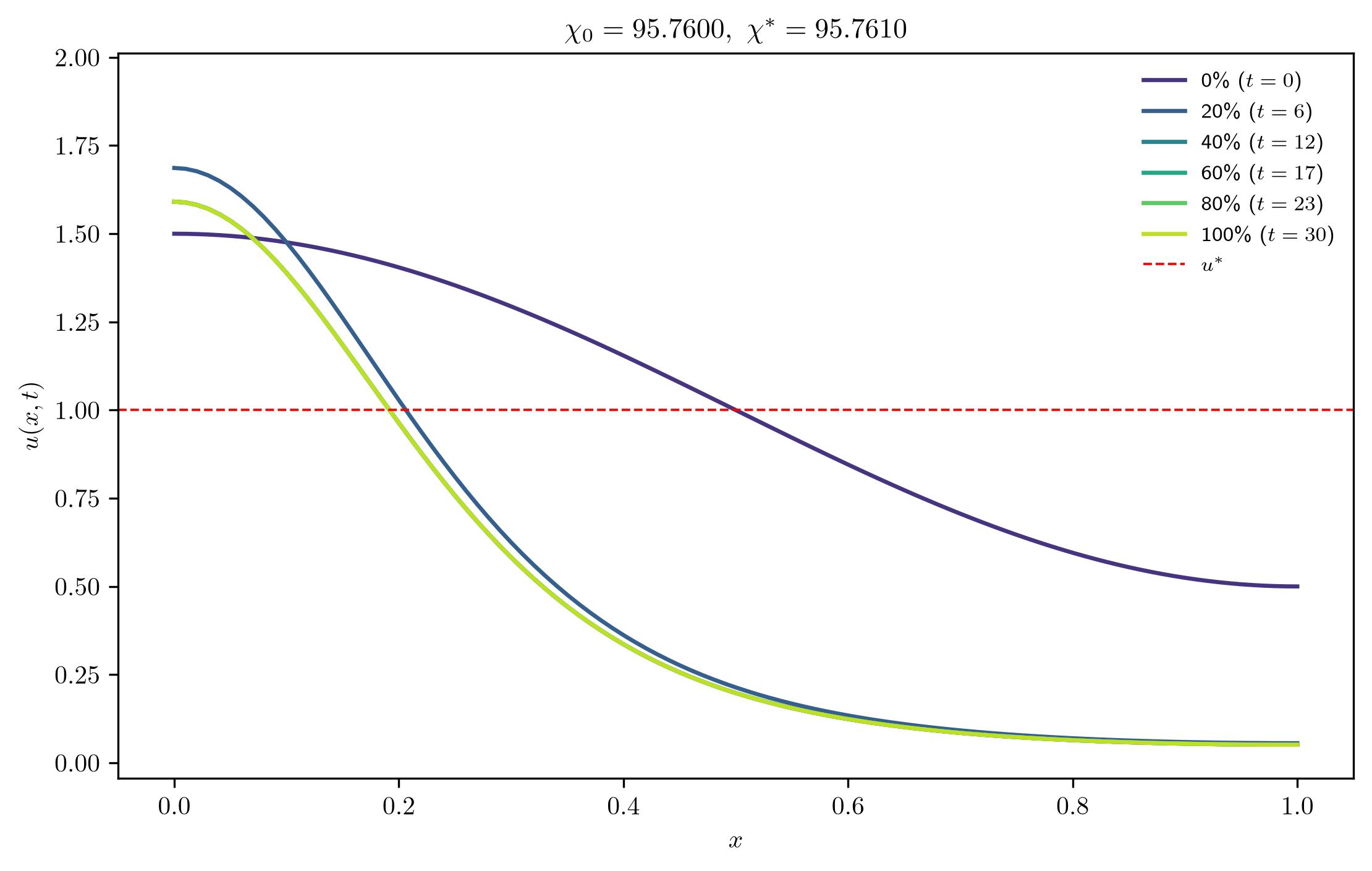}
  \end{subfigure}
  \hfill
  \begin{subfigure}[b]{0.45\textwidth}
    \centering
    \includegraphics[width=\textwidth]{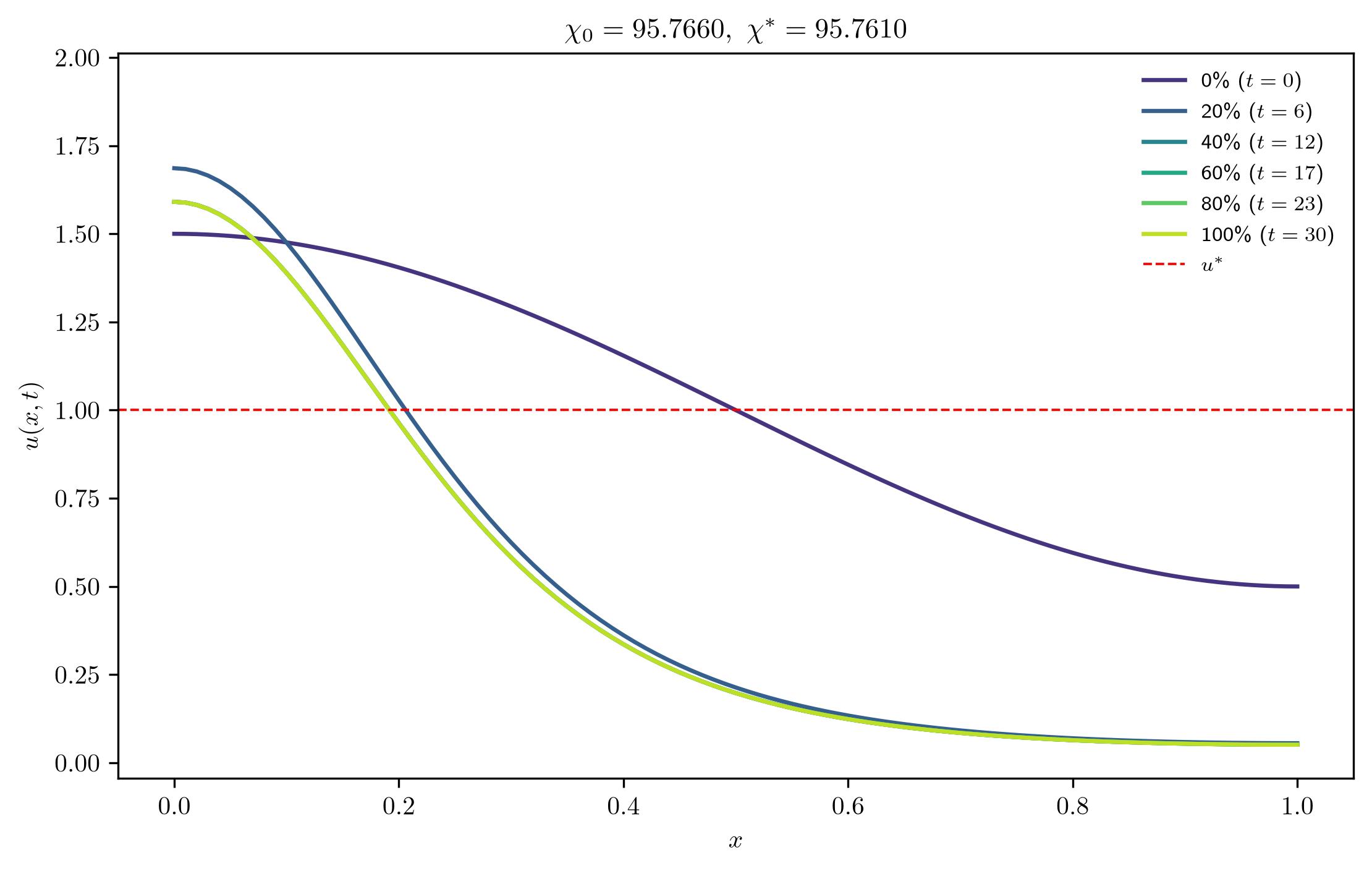}
  \end{subfigure}
  \hfill
  \begin{subfigure}[b]{0.45\textwidth}
    \centering
    \includegraphics[width=\textwidth]{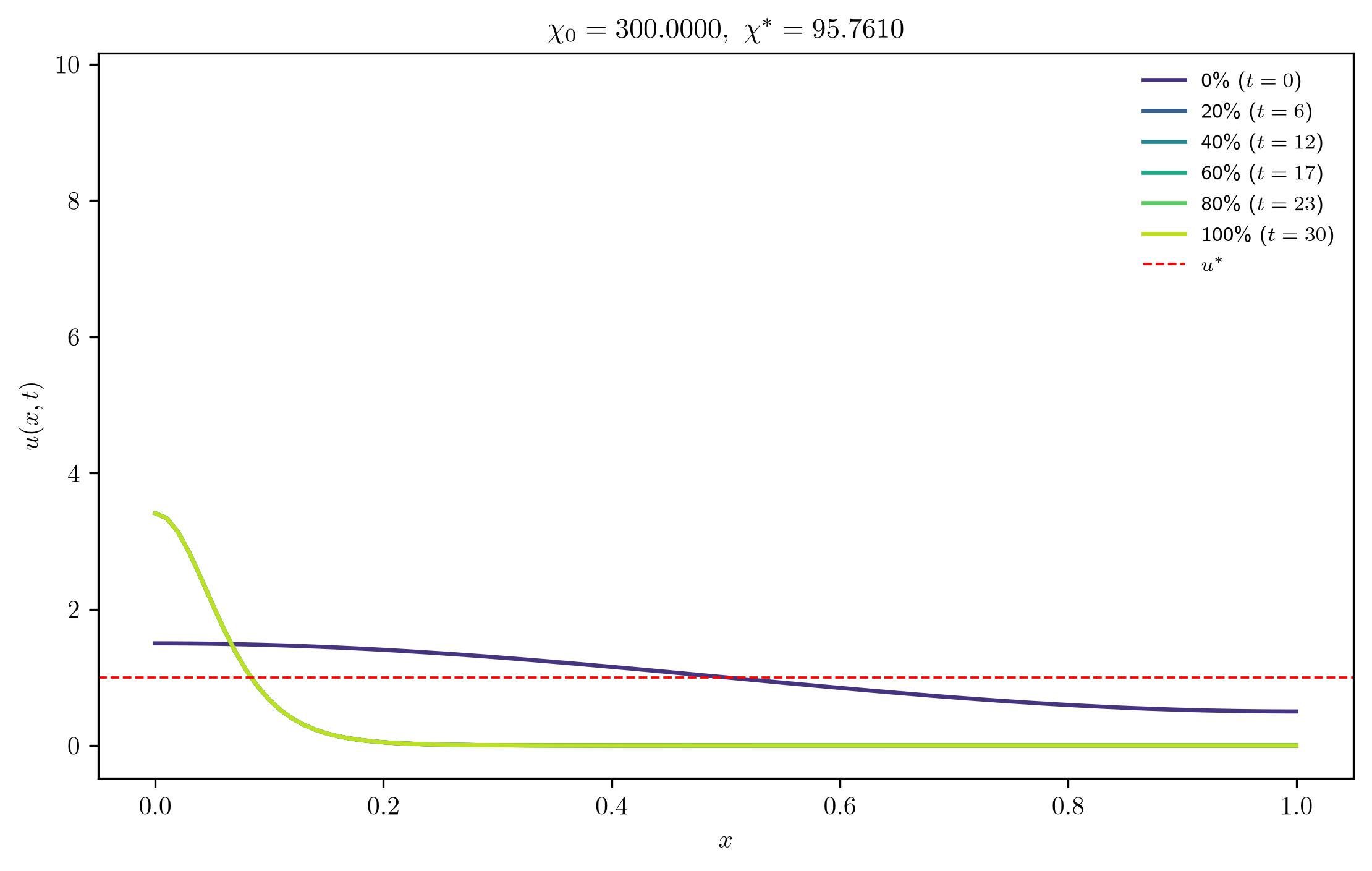}
  \end{subfigure}
  \hfill
  \begin{subfigure}[b]{0.45\textwidth}
    \centering
    \includegraphics[width=\textwidth]{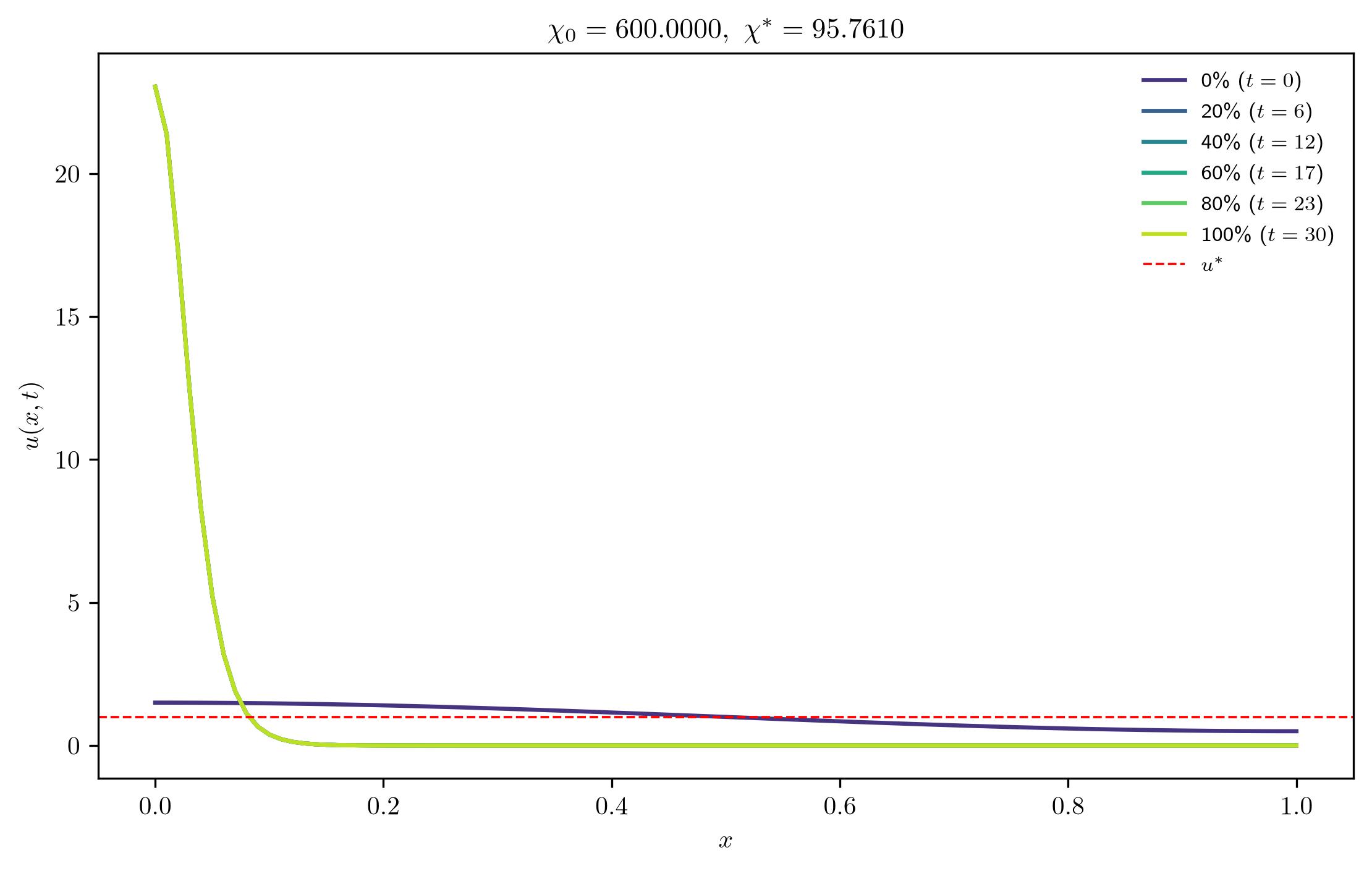}
  \end{subfigure}

  \caption{Numerical experiments for the parameters $a = b = \alpha = m = \mu =
  \nu = \gamma = L = 1$, $\beta = 3$, $\varepsilon = 0.5$, mesh size $N =
  100$, and final time $t = 30$, $u_0 = 1 + 0.5\cos(\pi x)$, and $\chi_0 \in
  \{95.76, \ 95.766, \ 300, \ 600\}$, and $\chi_{100}^{*,{\rm disc}} =
  95.7610$.}

  \label{F:beta3-sub-non-minimal}
\end{figure}

The first rows of Figures~\ref{F:beta4-super-minimal}
and~\ref{F:beta5-sub-minimal} show near-threshold experiments for the minimal
model with $\beta = 4$ and $\beta = 5$, respectively. These choices correspond
to the first block in the right panel (minimal model) of
Table~\ref{Tb:bifurcation-directions} and lie on opposite sides of the
transition $\beta_+\approx4.3692$. Thus, Figure~\ref{F:beta4-super-minimal} uses
parameters in the super-critical regime, whereas
Figure~\ref{F:beta5-sub-minimal} uses parameters in the sub-critical regime.

\begin{figure}[htbp]
  \centering
  \begin{subfigure}[b]{0.45\textwidth}
    \centering
    \includegraphics[width=\textwidth]{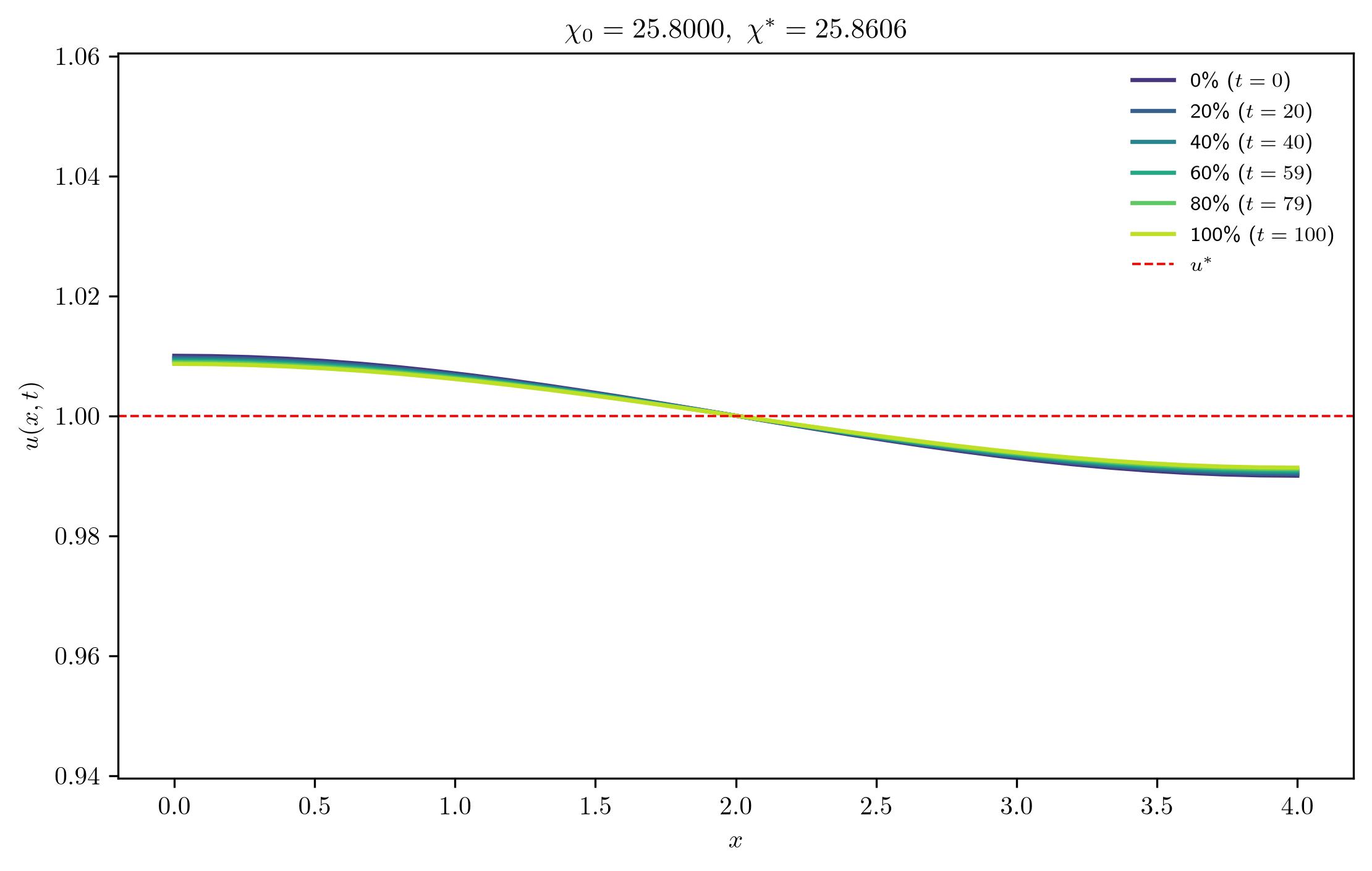}
  \end{subfigure}
  \hfill
  \begin{subfigure}[b]{0.45\textwidth}
    \centering
    \includegraphics[width=\textwidth]{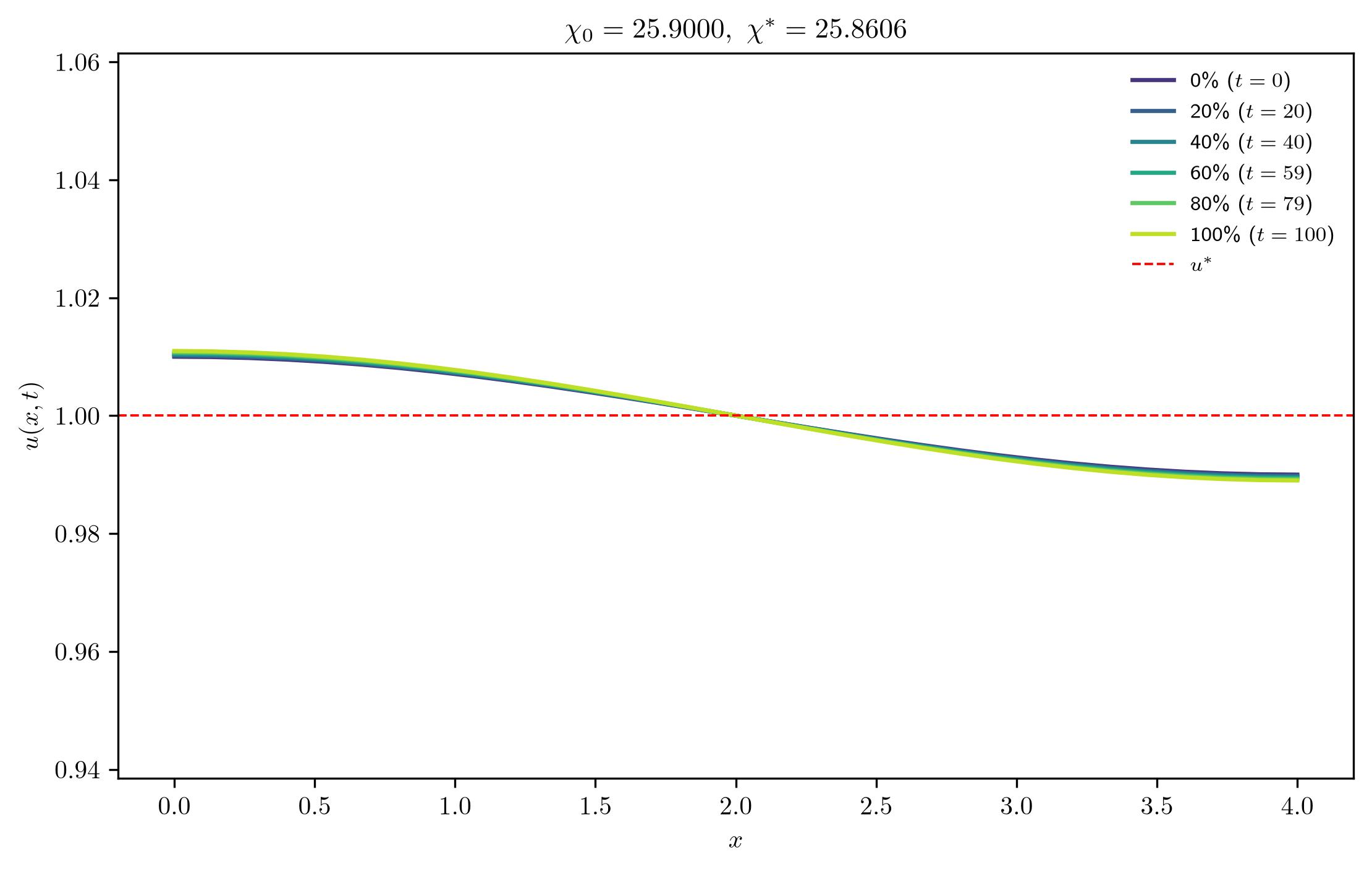}
  \end{subfigure}
  \hfill
  \begin{subfigure}[b]{0.45\textwidth}
    \centering
    \includegraphics[width=\textwidth]{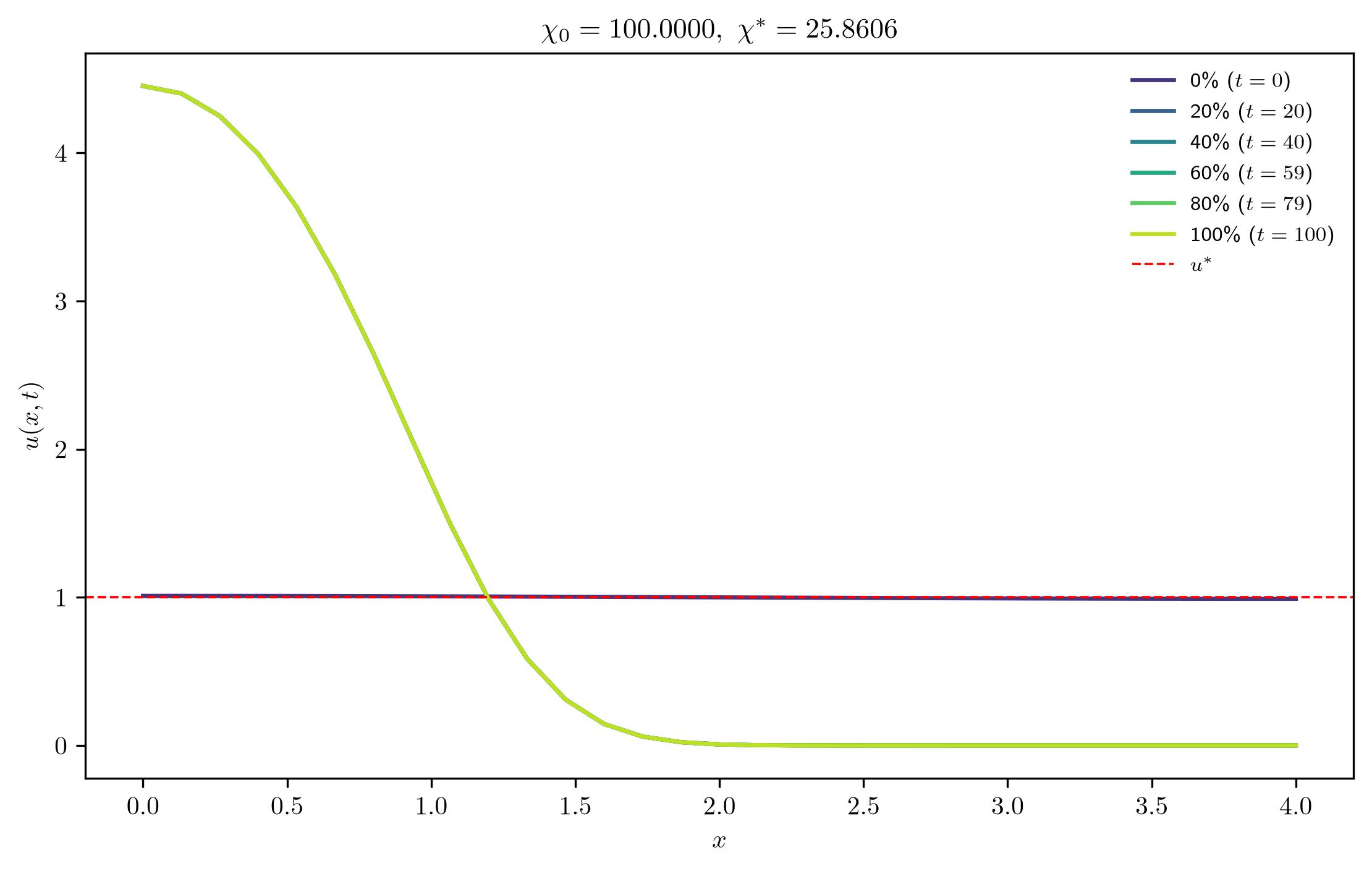}
  \end{subfigure}
  \hfill
  \begin{subfigure}[b]{0.45\textwidth}
    \centering
    \includegraphics[width=\textwidth]{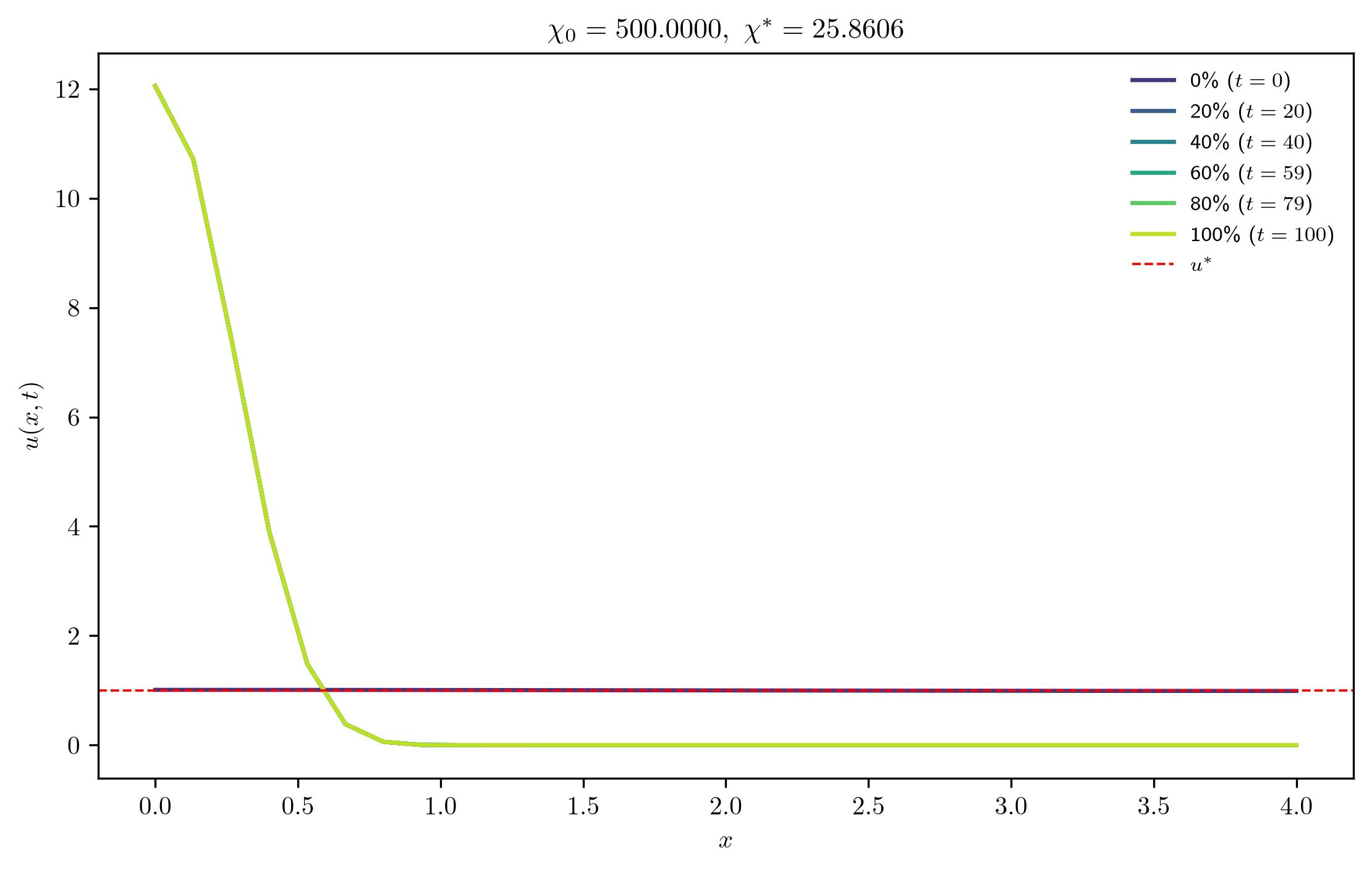}
  \end{subfigure}

  \caption{Numerical experiments for the parameters $a = b = 0$, $u^* = 1$,
  $\alpha = m = \mu = \nu = \gamma = 1$, $L = 4$, $\beta = 4$, $\varepsilon =
  0.01$, mesh size $N = 30$, and final time $t = 100$, $u_0 = 1 +
  0.01\cos(\frac{\pi x}{4})$, and $\chi_0 \in \{25.8, \ 25.9, \ 100, \ 500\}$,
  and $\chi_{30}^{*,{\rm disc}} = 25.8606$.}

  \label{F:beta4-super-minimal}
\end{figure}

\begin{figure}[htbp]
  \centering
  \begin{subfigure}[b]{0.45\textwidth}
    \centering
    \includegraphics[width=\textwidth]{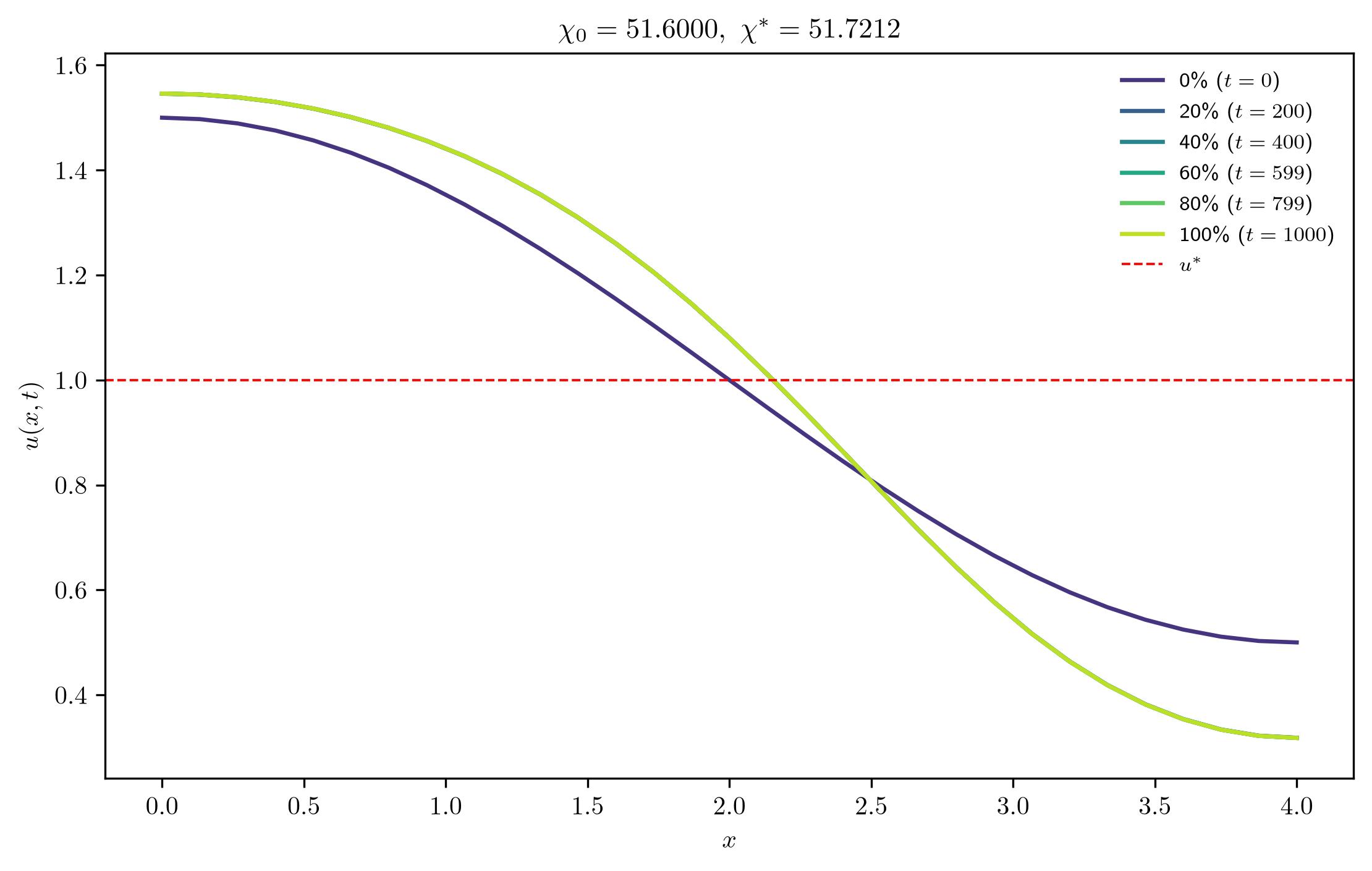}
  \end{subfigure}
  \hfill
  \begin{subfigure}[b]{0.45\textwidth}
    \centering
    \includegraphics[width=\textwidth]{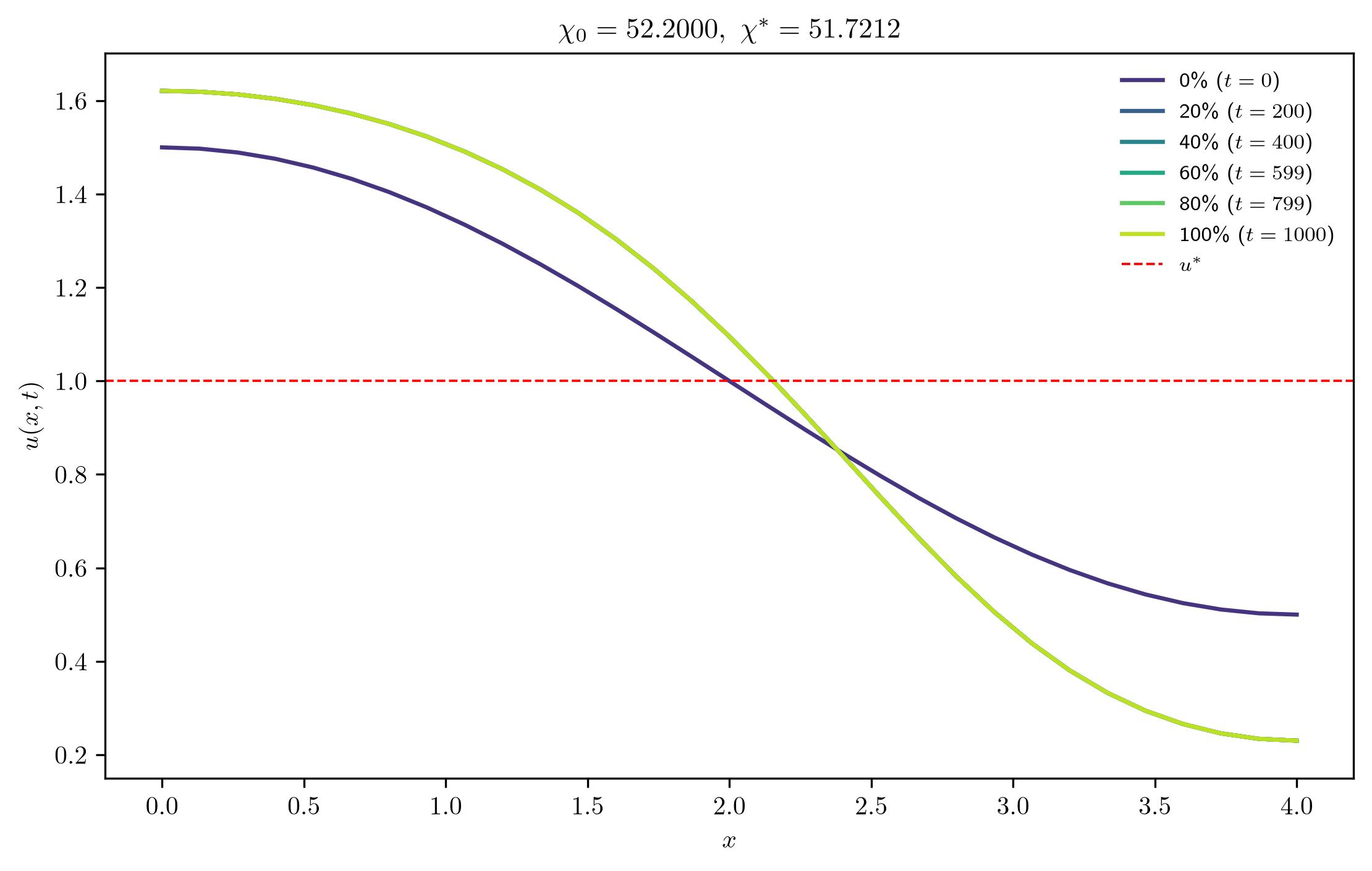}
  \end{subfigure}
  \hfill
  \begin{subfigure}[b]{0.45\textwidth}
    \centering
    \includegraphics[width=\textwidth]{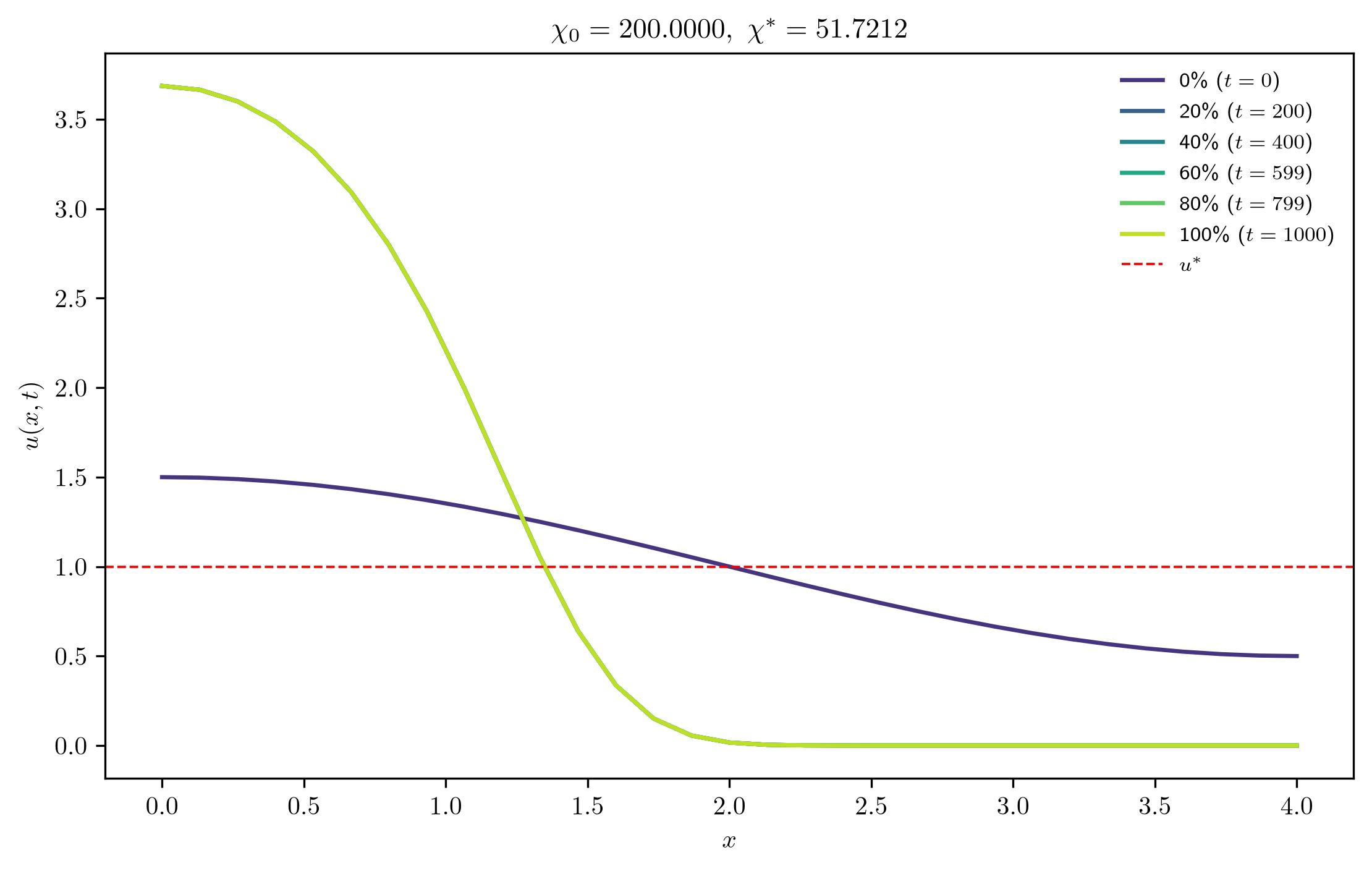}
  \end{subfigure}
  \hfill
  \begin{subfigure}[b]{0.45\textwidth}
    \centering
    \includegraphics[width=\textwidth]{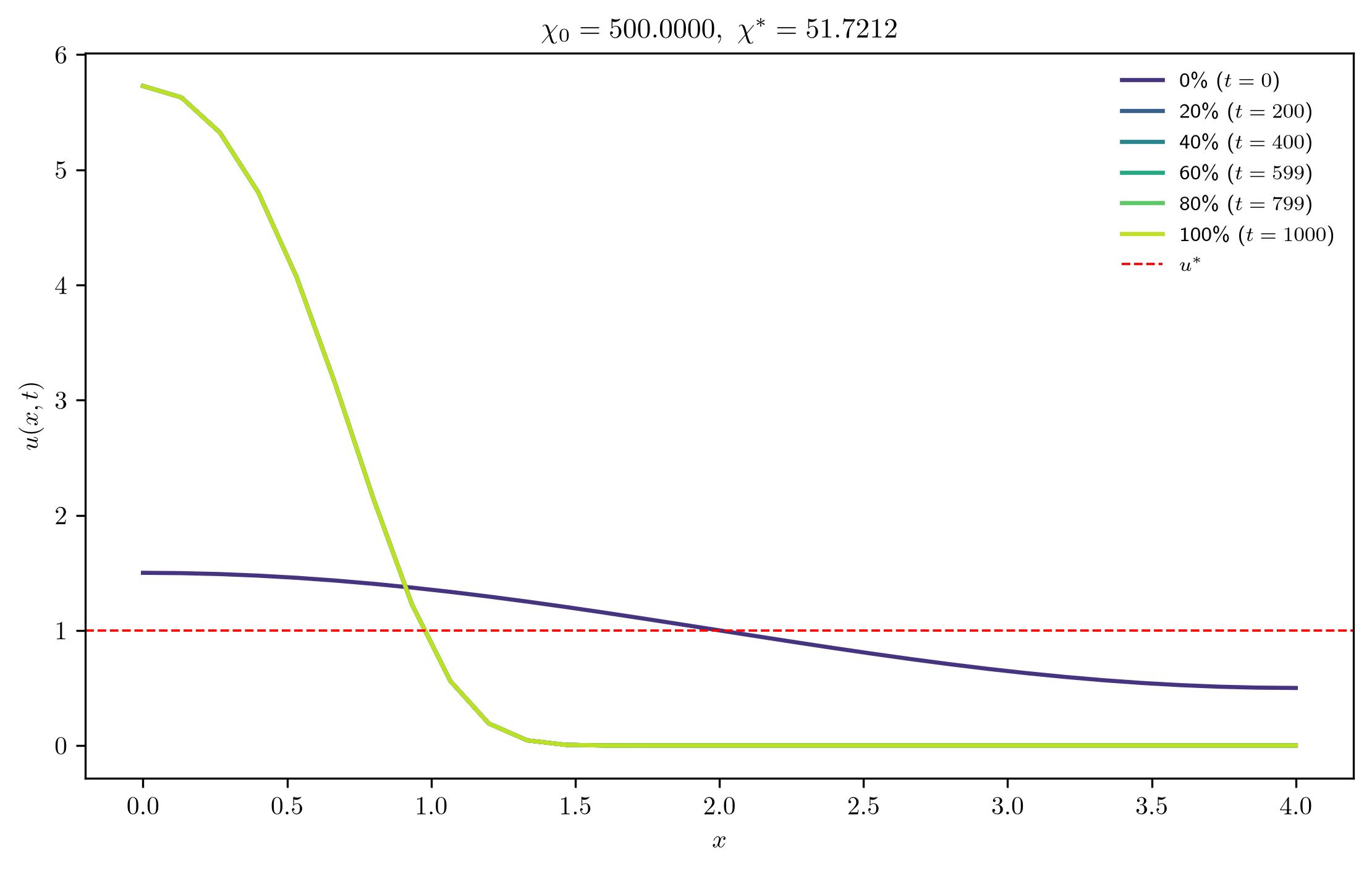}
  \end{subfigure}

  \caption{Numerical experiments for the parameters $a = b = 0$, $u^* = 1$,
  $\alpha = m = \mu = \nu = \gamma = 1$, $L = 4$, $\beta = 5$, $\varepsilon =
  0.5$, mesh size $N = 30$, and final time $t = 1000$, $u_0 = 1 +
  0.5\cos(\frac{\pi x}{4})$, and $\chi_0 \in \{51.6, \ 52.2, \ 200, \ 500\}$,
  and $\chi_{30}^{*,{\rm disc}} = 51.7212$.}

  \label{F:beta5-sub-minimal}
\end{figure}

From these numerical experiments, we observe the following:

\begin{itemize}
  \item Nonconstant profiles persist as $\chi_0$ increases, which suggests that
    the local bifurcation branches for $\chi_0$ near the threshold $\chi^*$
    extend to $\chi_0 \to \infty$, consistent with the unbounded alternative in
    Theorem~\ref{T:global-extension}(1).

  \item The maxima of the nonconstant profiles stay above $u^*$ in the tested
    regimes, where $u^* = (a/b)^{1/\alpha}$ for the non-minimal model and is the
    prescribed average for the minimal model, which suggests that there are
    non-constant stationary solutions satisfying the conditions in
    Theorem~\ref{T:spiky-solu-thm}, and then spike-like solutions exist as
    $\chi_0 \to \infty$.
\end{itemize}

\section*{Acknowledgments}
\addcontentsline{toc}{section}{Acknowledgments} L.~C. and I.~R. were partially
supported by the U.S. National Science Foundation under Grants DMS-2246850 and
DMS-2443823. L.~C. was also partially supported by Simons Foundation
Collaboration Grant for Mathematicians 959981. L. C. and I. R. thank OpenAI for
providing access to its AI tools through the ChatGPT for Academic Researchers
program.

\subsection*{Data and code availability}
The code and figures for the numerical experiments are available at
\url{https://github.com/ianruau/Chemotaxis_simulation}. All experiments were
performed using \texttt{simulation.py}. The script
\texttt{paper\_3\_numerical\_experiments.sh} reproduces the numerical
experiments and generates the figures stored in
\texttt{paper\_iii\_numerical\_experiments} that were used in this paper.

A Lean 4 companion formalizing selected results and conditional deductions is
available at \url{https://chenle02.github.io/chemotaxis-paper-iii-lean/}.
It documents the verified statements, their assumptions, and the remaining
exclusions; it does not constitute a formal verification of the entire paper.

\subsection*{AI declaration}
The authors manually derived all the expressions involved in  the proofs of
Theorems  \ref{T:local-bifurcation} and \ref{T:direction-change}  in the paper.
After the authors' manual computation of the expressions
in~\eqref{E:c-constant}, \eqref{E:alpha_n0}, \eqref{E:beta-n0},
and~\eqref{E:A-B-C-simple}, OpenAI's GPT models were used to assess that our
hand-derived expressions matched the models' expressions.

\bibliographystyle{amsrn}
\phantomsection
\addcontentsline{toc}{section}{References}
\bibliography{references}

@Article{	  baghaei:24:global,
  author	= {Baghaei, Khadijeh},
  title		= {Global boundedness of solutions to a chemotaxis
		  consumption model with signal dependent motility and
		  logistic source},
  journal	= {C. R. Math. Acad. Sci. Paris},
  fjournal	= {Comptes Rendus Math\'{e}matique. Acad\'{e}mie des
		  Sciences. Paris},
  volume	= {362},
  year		= {2024},
  number	= {G10},
  pages		= {1131--1145},
  issn		= {1631-073X},
  mrclass	= {35K51 (35K57 92C17)},
  mrnumber	= {4824914},
  doi		= {10.5802/crmath.605},
  url		= {https://doi.org/10.5802/crmath.605}
}

@Article{	  black.lankeit.ea:20:stabilization,
  author	= {Black, Tobias and Lankeit, Johannes and Mizukami,
		  Masaaki},
  title		= {Stabilization in the {K}eller-{S}egel system with
		  signal-dependent sensitivity},
  journal	= {Appl. Anal.},
  fjournal	= {Applicable Analysis. An International Journal},
  volume	= {99},
  year		= {2020},
  number	= {16},
  pages		= {2877--2891},
  issn		= {0003-6811},
  mrclass	= {35B40 (35K51 35Q92 92C17)},
  mrnumber	= {4174428},
  doi		= {10.1080/00036811.2019.1585534},
  url		= {https://doi.org/10.1080/00036811.2019.1585534}
}

@article {chen.hao.ea:14:stability,
    author = {Chen, Xinfu and Hao, Jianghao and Wang, Xuefeng and Wu, Yaping
              and Zhang, Yajing},
     title = {Stability of spiky solution of {K}eller-{S}egel's minimal
              chemotaxis model},
   journal = {J. Differential Equations},
  fjournal = {Journal of Differential Equations},
    volume = {257},
      year = {2014},
    number = {9},
     pages = {3102--3134},
      issn = {0022-0396,1090-2732},
   mrclass = {35K51 (35B35 35C20)},
  mrnumber = {3258133},
mrreviewer = {Jinhuan\ Wang},
       doi = {10.1016/j.jde.2014.06.008},
       url = {https://doi.org/10.1016/j.jde.2014.06.008},
}

@Article{	  chen.ruau.ea:26:chemotaxis*1,
  author	= {Chen, Le and Ruau, Ian and Shen, Wenxian},
  title		= {Chemotaxis models with signal-dependent sensitivity and a
		  logistic-type source, {I}: {B}oundedness and global
		  existence},
  journal	= {J. Differential Equations},
  fjournal	= {Journal of Differential Equations},
  volume	= {476},
  year		= {2026},
  pages		= {Paper No. 114477, 54},
  issn		= {0022-0396,1090-2732},
  mrclass	= {35K45 (35M31 35Q92 92C17 92D25)},
  mrnumber	= {5070860},
  doi		= {10.1016/j.jde.2026.114477},
  eprint	= {2512.14858},
  archiveprefix	= {arXiv},
  primaryclass	= {math.AP},
  url		= {https://doi.org/10.1016/j.jde.2026.114477}
}

@Article{	  chen.ruau.ea:26:chemotaxis,
  title		= {Chemotaxis models with signal-dependent sensitivity and a
		  logistic-type source, {II}: Persistence and stabilization},
  author	= {Le Chen and Ian Ruau and Wenxian Shen},
  year		= {2026},
  journal	= {Preprint arXiv:2604.02599v1},
  url		= {https://arxiv.org/abs/2604.02599v1}
}

@Article{	  crandall.rabinowitz:71:bifurcation,
  author	= {Crandall, Michael G. and Rabinowitz, Paul H.},
  title		= {Bifurcation from simple eigenvalues},
  journal	= {J. Functional Analysis},
  fjournal	= {Journal of Functional Analysis},
  volume	= {8},
  year		= {1971},
  pages		= {321--340},
  issn		= {0022-1236},
  mrclass	= {47.80 (35.00)},
  mrnumber	= {288640},
  mrreviewer	= {B.\ V.\ Loginov},
  doi		= {10.1016/0022-1236(71)90015-2},
  url		= {https://doi.org/10.1016/0022-1236(71)90015-2}
}

@Article{	  galakhov.salieva.ea:16:on,
  author	= {Galakhov, Evgeny and Salieva, Olga and Tello, J. Ignacio},
  title		= {On a parabolic-elliptic system with chemotaxis and
		  logistic type growth},
  journal	= {J. Differential Equations},
  fjournal	= {Journal of Differential Equations},
  volume	= {261},
  year		= {2016},
  number	= {8},
  pages		= {4631--4647},
  issn		= {0022-0396},
  mrclass	= {92C17 (35A01 35B35 35B40 35B44 35K59 35M33 35Q92)},
  mrnumber	= {3537839},
  doi		= {10.1016/j.jde.2016.07.008},
  url		= {https://doi.org/10.1016/j.jde.2016.07.008}
}

@Book{		  henry:81:geometric,
  author	= {Henry, Daniel},
  title		= {Geometric theory of semilinear parabolic equations},
  series	= {Lecture Notes in Mathematics},
  volume	= {840},
  publisher	= {Springer-Verlag, Berlin-New York},
  year		= {1981},
  pages		= {iv+348},
  isbn		= {3-540-10557-3},
  mrclass	= {35K55 (34G20 58D25)},
  mrnumber	= {610244},
  mrreviewer	= {J. A. Goldstein}
}

@Article{	  herrero.medina.ea:97:finite-time,
  author	= {Herrero, M. A. and Medina, E. and Vel\'{a}zquez, J. J. L.},
  title		= {Finite-time aggregation into a single point in a
		  reaction-diffusion system},
  journal	= {Nonlinearity},
  fjournal	= {Nonlinearity},
  volume	= {10},
  year		= {1997},
  number	= {6},
  pages		= {1739--1754},
  issn		= {0951-7715},
  mrclass	= {35K57 (35B40 93B05)},
  mrnumber	= {1483563},
  doi		= {10.1088/0951-7715/10/6/016},
  url		= {https://doi.org/10.1088/0951-7715/10/6/016}
}

@Article{	  herrero.velazquez:96:singularity,
  author	= {Herrero, Miguel A. and Vel\'{a}zquez, Juan J. L.},
  title		= {Singularity patterns in a chemotaxis model},
  journal	= {Math. Ann.},
  fjournal	= {Mathematische Annalen},
  volume	= {306},
  year		= {1996},
  number	= {3},
  pages		= {583--623},
  issn		= {0025-5831},
  mrclass	= {35Q99 (35B05 35K55 92C40)},
  mrnumber	= {1415081},
  mrreviewer	= {Joseph G.-G. Yan},
  doi		= {10.1007/BF01445268},
  url		= {https://doi.org/10.1007/BF01445268}
}

@Article{	  hu.tao:17:boundedness,
  author	= {Hu, Bingran and Tao, Youshan},
  title		= {Boundedness in a parabolic-elliptic chemotaxis-growth
		  system under a critical parameter condition},
  journal	= {Appl. Math. Lett.},
  fjournal	= {Applied Mathematics Letters. An International Journal of
		  Rapid Publication},
  volume	= {64},
  year		= {2017},
  pages		= {1--7},
  issn		= {0893-9659},
  mrclass	= {92C17 (35J91 35K59 35M33 35Q92)},
  mrnumber	= {3564732},
  doi		= {10.1016/j.aml.2016.08.003},
  url		= {https://doi.org/10.1016/j.aml.2016.08.003}
}

@Article{	  jin.wang.ea:16:pattern,
  title		= {Pattern formation in {K}eller--{S}egel chemotaxis models
		  with logistic growth},
  author	= {Ling Jin and Qi Wang and Zengyan Zhang},
  year		= {2016},
  journal	= {Internat. J. Bifur. Chaos Appl. Sci. Engrg.},
  volume	= {26},
  number	= {2},
  pages		= {1650033, 15},
  doi		= {10.1142/S0218127416500334},
  eprint	= {1407.5246},
  archiveprefix	= {arXiv},
  primaryclass	= {math.AP},
  url		= {https://arxiv.org/abs/1407.5246}
}

@Article{	  keller.segel:70:initiation,
  author	= {Keller, Evelyn F. and Segel, Lee A.},
  title		= {Initiation of slime mold aggregation viewed as an
		  instability},
  journal	= {J. Theoret. Biol.},
  fjournal	= {Journal of Theoretical Biology},
  volume	= {26},
  year		= {1970},
  number	= {3},
  pages		= {399--415},
  issn		= {0022-5193},
  mrclass	= {92C17 (35Q92 82C24 92C45)},
  mrnumber	= {3925816},
  doi		= {10.1016/0022-5193(70)90092-5},
  url		= {https://doi.org/10.1016/0022-5193(70)90092-5}
}

@Article{	  keller.segel:71:model,
  title		= {Model for chemotaxis},
  author	= {Keller, Evelyn F and Segel, Lee A},
  fjournal	= {Journal of theoretical biology},
  journal	= {J. Theoret. Biol.},
  volume	= {30},
  number	= {2},
  pages		= {225--234},
  year		= {1971},
  publisher	= {Elsevier},
  doi		= {10.1016/0022-5193(71)90050-6}
}

@Article{	  keller.segel:71:traveling,
  title		= {Traveling bands of chemotactic bacteria: a theoretical
		  analysis},
  author	= {Keller, Evelyn F and Segel, Lee A},
  fjournal	= {Journal of theoretical biology},
  journal	= {J. Theoret. Biol.},
  volume	= {30},
  number	= {2},
  pages		= {235--248},
  year		= {1971},
  publisher	= {Elsevier},
  doi		= {10.1016/0022-5193(71)90051-8}
}

@article {kong.wang:22:stability,
    author = {Kong, Fanze and Wang, Qi},
     title = {Stability, free energy and dynamics of multi-spikes in the
              minimal {K}eller-{S}egel model},
   journal = {Discrete Contin. Dyn. Syst.},
  fjournal = {Discrete and Continuous Dynamical Systems},
    volume = {42},
      year = {2022},
    number = {5},
     pages = {2499--2523},
      issn = {1078-0947,1553-5231},
   mrclass = {92C17 (35K57 35Q92)},
  mrnumber = {4405278},
       doi = {10.3934/dcds.2021200},
       url = {https://doi.org/10.3934/dcds.2021200},
}

@article {kong.ward.ea:24:existence,
    author = {Kong, Fanze and Ward, Michael J. and Wei, Juncheng},
     title = {Existence, stability and slow dynamics of spikes in a 1{D}
              minimal {K}eller-{S}egel model with logistic growth},
   journal = {J. Nonlinear Sci.},
  fjournal = {Journal of Nonlinear Science},
    volume = {34},
      year = {2024},
    number = {3},
     pages = {Paper No. 51, 76},
      issn = {0938-8974,1432-1467},
   mrclass = {35K51 (35B20 92C17)},
  mrnumber = {4728829},
       doi = {10.1007/s00332-024-10025-7},
       url = {https://doi.org/10.1007/s00332-024-10025-7},
}

@article {kong.wei.ea:23:existence,
    author = {Kong, Fanze and Wei, Juncheng and Xu, Liangshun},
     title = {The existence and stability of spikes in the one-dimensional
              {K}eller-{S}egel model with logistic growth},
   journal = {J. Math. Biol.},
  fjournal = {Journal of Mathematical Biology},
    volume = {86},
      year = {2023},
    number = {1},
     pages = {Paper No. 6, 54},
      issn = {0303-6812,1432-1416},
   mrclass = {92C17 (35B40 35K55 35Q92)},
  mrnumber = {4519183},
mrreviewer = {Qiao\ Xin},
       doi = {10.1007/s00285-022-01840-1},
       url = {https://doi.org/10.1007/s00285-022-01840-1},
}

@Article{	  kurt.shen.ea:24:stability,
  title		= {Stability, bifurcation and spikes of stationary solutions
		  in a chemotaxis system with singular sensitivity and
		  logistic source},
  author	= {Halil Ibrahim Kurt and Wenxian Shen and Shuwen Xue},
  year		= {2024},
  journal	= {Mathematical Models and Methods in Applied Sciences},
  volume	= {34},
  number	= {9},
  pages		= {1649--1700},
  mrnumber      = {4780931},
  doi		= {10.1142/S0218202524500325},
  eprint	= {2403.14907v1},
  archiveprefix	= {arXiv},
  primaryclass	= {math.AP},
  note		= {The theorem numbers cited in the text refer to arXiv:2403.14907v1},
  url		= {https://doi.org/10.1142/S0218202524500325}
}

@Article{	  kuto.osaki.ea:12:spatial,
  author	= {Kuto, Kousuke and Osaki, Koichi and Sakurai, Tatsunari and
		  Tsujikawa, Tohru},
  title		= {Spatial pattern formation in a chemotaxis-diffusion-growth
		  model},
  journal	= {Phys. D},
  fjournal	= {Physica D. Nonlinear Phenomena},
  volume	= {241},
  year		= {2012},
  number	= {19},
  pages		= {1629--1639},
  issn		= {0167-2789},
  mrclass	= {82C05},
  mrnumber	= {2965808},
  doi		= {10.1016/j.physd.2012.06.009},
  url		= {https://doi.org/10.1016/j.physd.2012.06.009}
}

@article {li:18:spiky,
    author = {Li, Huicong},
     title = {Spiky steady states of a chemotaxis system with singular
              sensitivity},
   journal = {J. Dynam. Differential Equations},
  fjournal = {Journal of Dynamics and Differential Equations},
    volume = {30},
      year = {2018},
    number = {4},
     pages = {1775--1795},
      issn = {1040-7294,1572-9222},
   mrclass = {92C17 (35B32 35B36 35B40 35J57 35K51)},
  mrnumber = {3871622},
mrreviewer = {Gabriela\ Marinoschi},
       doi = {10.1007/s10884-017-9621-3},
       url = {https://doi.org/10.1007/s10884-017-9621-3},
}

@Article{	  lin.ni.ea:88:large,
  author	= {Lin, C.-S. and Ni, W.-M. and Takagi, I.},
  title		= {Large amplitude stationary solutions to a chemotaxis
		  system},
  journal	= {J. Differential Equations},
  fjournal	= {Journal of Differential Equations},
  volume	= {72},
  year		= {1988},
  number	= {1},
  pages		= {1--27},
  issn		= {0022-0396,1090-2732},
  mrclass	= {35K55 (35J60 92A09)},
  mrnumber	= {929196},
  mrreviewer	= {Siegfried\ Carl},
  doi		= {10.1016/0022-0396(88)90147-7},
  url		= {https://doi.org/10.1016/0022-0396(88)90147-7}
}

@Article{	  nagai:01:blowup,
  author	= {Nagai, Toshitaka},
  title		= {Blowup of nonradial solutions to parabolic-elliptic
		  systems modeling chemotaxis in two-dimensional domains},
  journal	= {J. Inequal. Appl.},
  fjournal	= {Journal of Inequalities and Applications},
  volume	= {6},
  year		= {2001},
  number	= {1},
  pages		= {37--55},
  issn		= {1025-5834},
  mrclass	= {35B40 (35M20 92C17)},
  mrnumber	= {1887324},
  mrreviewer	= {Julio D. Rossi},
  doi		= {10.1155/S1025583401000042},
  url		= {https://doi.org/10.1155/S1025583401000042}
}

@Article{	  nagai.senba:98:global,
  author	= {Nagai, Toshitaka and Senba, Takasi},
  title		= {Global existence and blow-up of radial solutions to a
		  parabolic-elliptic system of chemotaxis},
  journal	= {Adv. Math. Sci. Appl.},
  fjournal	= {Advances in Mathematical Sciences and Applications},
  volume	= {8},
  year		= {1998},
  number	= {1},
  pages		= {145--156},
  issn		= {1343-4373},
  mrclass	= {35Q80 (35B05 35B60 35M10 92D50)},
  mrnumber	= {1623326},
  mrreviewer	= {Andreas M\"{u}ller-Rettkowski}
}

@Article{	  shi.wang:09:on,
  author	= {Shi, Junping and Wang, Xuefeng},
  title		= {On global bifurcation for quasilinear elliptic systems on
		  bounded domains},
  journal	= {J. Differential Equations},
  fjournal	= {Journal of Differential Equations},
  volume	= {246},
  year		= {2009},
  number	= {7},
  pages		= {2788--2812},
  issn		= {0022-0396,1090-2732},
  mrclass	= {35J60 (35B32 47H11 47J15 47N20 58E07)},
  mrnumber	= {2503022},
  mrreviewer	= {Jes\'{u}s\ Hern\'{a}ndez},
  doi		= {10.1016/j.jde.2008.09.009},
  url		= {https://doi.org/10.1016/j.jde.2008.09.009}
}

@Article{  shemtaga.shen.ea:23:stability,
  author  = {Shemtaga, Hewan and Shen, Wenxian and Sukhtaiev, Selim},
  title    = {Stability and bifurcation for logistic {K}eller--{S}egel models
      on compact graphs},
  journal  = {Pure Appl. Funct. Anal.},
  volume  = {9},
  number  = {5},
  pages    = {1359--1385},
  year    = {2024},
  url    = {https://arxiv.org/abs/2310.00756},
}

@Article{	  tello.winkler:07:chemotaxis,
  author	= {Tello, J. Ignacio and Winkler, Michael},
  title		= {A chemotaxis system with logistic source},
  journal	= {Comm. Partial Differential Equations},
  fjournal	= {Communications in Partial Differential Equations},
  volume	= {32},
  year		= {2007},
  number	= {4-6},
  pages		= {849--877},
  issn		= {0360-5302},
  mrclass	= {92C17 (35K57)},
  mrnumber	= {2334836},
  doi		= {10.1080/03605300701319003},
  url		= {https://doi.org/10.1080/03605300701319003}
}

@Article{	  viglialoro.woolley:18:boundedness,
  author	= {Viglialoro, Giuseppe and Woolley, Thomas E.},
  title		= {Boundedness in a parabolic-elliptic chemotaxis system with
		  nonlinear diffusion and sensitivity and logistic source},
  journal	= {Math. Methods Appl. Sci.},
  fjournal	= {Mathematical Methods in the Applied Sciences},
  volume	= {41},
  year		= {2018},
  number	= {5},
  pages		= {1809--1824},
  issn		= {0170-4214},
  mrclass	= {92C17 (35A01 35B40 35K59 35M33 35Q92)},
  mrnumber	= {3778089},
  mrreviewer	= {Si Ning Zheng},
  doi		= {10.1002/mma.4707},
  url		= {https://doi.org/10.1002/mma.4707}
}

@article {wang.xu:13:spiky,
    author = {Wang, Xuefeng and Xu, Qian},
     TITLE = {Spiky and transition layer steady states of chemotaxis systems
              via global bifurcation and {H}elly's compactness theorem},
   journal = {J. Math. Biol.},
  fjournal = {Journal of Mathematical Biology},
    volume = {66},
      year = {2013},
    number = {6},
     pages = {1241--1266},
      issn = {0303-6812,1432-1416},
   mrclass = {92C17 (35B32 35B41 35K51 35Q92)},
  mrnumber = {3040975},
       doi = {10.1007/s00285-012-0533-x},
       url = {https://doi.org/10.1007/s00285-012-0533-x},
}

@article {wang.xu:22:radial,
    author = {Wang, Zhi-An and Xu, Xin},
     title = {Radial spiky steady states of a flux-limited {K}eller-{S}egel
              model: existence, asymptotics, and stability},
   journal = {Stud. Appl. Math.},
  fjournal = {Studies in Applied Mathematics},
    volume = {148},
      year = {2022},
    number = {3},
     pages = {1251--1273},
      issn = {0022-2526,1467-9590},
   mrclass = {92C17 (35Q92)},
  mrnumber = {4429085},
       doi = {10.1111/sapm.12474},
       url = {https://doi.org/10.1111/sapm.12474},
}

@Article{	  wang.yan.ea:16:qualitative,
  title		= {Qualitative analysis of stationary {K}eller--{S}egel
		  chemotaxis models with logistic growth},
  author	= {Qi Wang and Jingda Yan and Chunyi Gai},
  year		= {2016},
  journal	= {Z. Angew. Math. Phys.},
  volume	= {67},
  number	= {3},
  pages		= {Paper No. 51, 25},
  doi		= {10.1007/s00033-016-0648-9},
  eprint	= {1312.0258},
  archiveprefix	= {arXiv},
  primaryclass	= {math.AP},
  url		= {https://arxiv.org/abs/1312.0258}
}

@Article{	  winkler:10:absence,
  author	= {Winkler, Michael},
  title		= {Absence of collapse in a parabolic chemotaxis system with
		  signal-dependent sensitivity},
  journal	= {Math. Nachr.},
  fjournal	= {Mathematische Nachrichten},
  volume	= {283},
  year		= {2010},
  number	= {11},
  pages		= {1664--1673},
  issn		= {0025-584X},
  mrclass	= {35K51 (35A01 35A02 35B45 35Q92 92C17)},
  mrnumber	= {2759803},
  mrreviewer	= {Narcisa C. Apreutesei},
  doi		= {10.1002/mana.200810838},
  url		= {https://doi.org/10.1002/mana.200810838}
}

@Article{	  xiang:19:dynamics,
  author	= {Xiang, Tian},
  title		= {Dynamics in a parabolic-elliptic chemotaxis system with
		  growth source and nonlinear secretion},
  journal	= {Commun. Pure Appl. Anal.},
  fjournal	= {Communications on Pure and Applied Analysis},
  volume	= {18},
  year		= {2019},
  number	= {1},
  pages		= {255--284},
  issn		= {1534-0392},
  mrclass	= {35M33 (35K57 35Q92 92C17)},
  mrnumber	= {3845565},
  doi		= {10.3934/cpaa.2019014},
  url		= {https://doi.org/10.3934/cpaa.2019014}
}

@article {xu:21:existence,
    author = {Xu, Xin},
     title = {Existence of spiky steady state of chemotaxis models with
              logarithm sensitivity},
   journal = {Math. Methods Appl. Sci.},
  fjournal = {Mathematical Methods in the Applied Sciences},
    volume = {44},
      year = {2021},
    number = {2},
     pages = {1484--1499},
      issn = {0170-4214,1099-1476},
   mrclass = {92C17 (35A01 35B40 35Q92)},
  mrnumber = {4185328},
       doi = {10.1002/mma.6846},
       url = {https://doi.org/10.1002/mma.6846},
}
\end{document}